\documentclass[12pt,a4paper]{article}
\usepackage[utf8]{inputenc}
\usepackage{amsmath, amssymb, amsthm, verbatim, hyperref}
\usepackage{mathrsfs}
\usepackage[dvipsnames]{xcolor}
\usepackage{ragged2e}
\usepackage{marginnote}
\usepackage{enumerate}
\usepackage{makecell}
\usepackage{longtable}
\usepackage{epsfig}
\usepackage{ytableau}
\usepackage{tikz}
\usepackage{ytableau}
\usepackage{hyperref}
\usepackage{caption}
\usepackage{subcaption}
\hypersetup{hidelinks}
\usepackage{fancyvrb}
\usepackage{tikz-cd}
\usepackage[left=2.50cm, right=2.50cm, top=2.00cm, bottom=2.00cm]{geometry}
\usetikzlibrary {arrows.meta}
\usepackage{ifthen}

\newtheorem{theorem}{Theorem}[section]
\newtheorem{lemma}[theorem]{Lemma}
\newtheorem{proposition}[theorem]{Proposition}
\newtheorem{corollary}{Corollary}[theorem]

\newcommand{\todo}[1]{{\color{green}{#1}}}

\def\fill{\operatorname{fill}}

\def\ins{\operatorname{ins}}
\def\rec{\operatorname{rec}}

\definecolor{goodgreen}{rgb}{0.01, 0.75, 0.24}

\theoremstyle{plain}

\newtheorem{thm}{Theorem}[section]

\newtheorem{cor}[thm]{Corollary}

\usepackage{ytableau}

\theoremstyle{definition}
\newtheorem{definition}[thm]{Definition}
\newtheorem{example}[thm]{Example}
\theoremstyle{remark}

\usepackage{float}

\newcommand{\SSS}{\mathfrak{S}}
\newcommand{\GGG}{\mathfrak{G}}

\newcommand{\BM}{\mathsf{BM}}

\def\+{\includegraphics[scale=0.4]{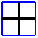}}
\def\bt{\includegraphics[scale=0.4]{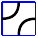}}

\usepackage{graphicx} 

\usepackage{amsmath}
\usepackage{amsthm}

\title{\vspace{-1em} 
\textbf{Pipe Dream Rectification and Dual RSK Correspondence}}
\author{AnLan Xu}
\date{\vspace{-2em}}
\allowdisplaybreaks

\begin{document}
\ytableausetup{centertableaux}

\maketitle

    \begin{abstract}
        We prove Dennin's conjecture that his variant of dual RSK correspondence is symmetric when restricted to biGrassmannian permutations. For a binary matrix $A$, let $A^\dagger$ denote its transpose-complement, and let $\operatorname{ins}(A)$ and $\operatorname{rec}(A)$ denote its insertion and recording tableaux. We prove that $\operatorname{ins}(A^\dagger) = \overline{\operatorname{rec}(A)}$, where the bar denotes the natural complement of the recording tableau. Our proof uses rectification of super pipe dreams and a downward induction on suffixes of $A$.
    \end{abstract}

\tableofcontents


\section{Introduction}

    The \textit{Robinson–Schensted–Knuth (RSK) correspondence} is a classic bijection in algebraic combinatorics, introduced by Knuth in \cite{knuth1970permutation} generalizing the RS correspondence \cite{robinson1938on,schensted1961longest}. It maps matrices of nonnegative integers to pairs of \textit{semistandard Young tableaux (SSYT)} of the same shape. For the many applications of this correspondence, we refer the reader to \cite{stanley2024enumerative}. Among the applications, it famously gives a simple combinatorial proof for the \textit{Cauchy identity}
    \[
    \sum_\lambda s_\lambda(\mathbf{x})s_\lambda(\mathbf{y}) = \prod_{i,j\geq 1} \dfrac{1}{1- x_iy_j}.
    \]
    Here, $s_{\lambda}(\mathbf{x})$ denotes the Schur function $s_{\lambda}$ in infinitely many variables $x_1,x_2,\ldots$. Dually, one also has the \textit{dual RSK correspondence} which bijects binary matrices (matrices with 0 and 1 entries) and pairs of SSYTs of conjugate shape. This yields the \textit{dual Cauchy identity}
    \[
    \sum_\lambda s_\lambda(\mathbf{x}) s_{\lambda'}(\mathbf{y}) = \prod_{i,j\geq 1} (1 + x_iy_j).
    \]
    These simple combinatorial proofs have fascinated combinatorialists for decades. Variants of the Cauchy identity and RSK have been found. See for example \cite{sagan1990robinson,fomingrothendieck,corwin2014tropical,bufetov2018hall,aigner2022qrst,frieden2024qt,dennin2025cauchy}.

    For each permutation $w\in S_n$, Lascoux and Sch\"utzenberger defined the \textit{Schubert polynomial} $\SSS_w(x)$ in \cite{lascoux1982polynomes} to represent the class of a corresponding Schubert variety $X_w$ inside the cohomology ring of the complete flag variety $\mathrm{Fl}_n$. In the equivariant cohomology setting, the classes are represented by \textit{double Schubert polynomials} $\SSS_w(x;y)$ \cite{lascoux1982classes}. Similarly, in the $K$-theory and equivariant $K$-theory settings, we have Grothedieck polynomials $\GGG_w(x)$ and double Grothendieck polynomials $\GGG_w(x;y)$, respectively \cite{lascoux1982structure,lascoux1983symmetry}. All these polynomials can be computed via \textit{pipe dreams} \cite{bergeron1993rc,fomin1996the,knutson2005grobner}.

    In \cite{fomingrothendieck}, Fomin--Kirillov introduced Grothendieck polynomials an additional parameter $\GGG_w^{(\beta)}(x)$ that specializes to $\SSS_w(x)$ and $\GGG_w(x)$ at $\beta = 0$ and $\beta = -1$, respectively. Then, the following identity can be considered a unifying Cauchy identity for Schubert and Grothendieck polynomials.

    \begin{thm}\label{thm:fomin-cauchy}
        We have
        \[
        \GGG_w^{(\beta)}(x;y) = \sum_{\substack{u,v\in S_\infty \\ w = u^{-1}\ast v}}\beta^{\ell(u)+ \ell(v) - \ell(w)}\GGG_v^{(\beta)}(x) \GGG_u^{(\beta)}(y),
        \]
        where $\ast$ denotes the product in the Demazure algebra.
    \end{thm}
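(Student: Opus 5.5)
I would prove this with the Fomin--Kirillov construction in the nilHecke/Demazure algebra. Work in the $\beta$-Demazure algebra with generators $\pi_1,\pi_2,\ldots$. They satisfy the braid relations and $\pi_i^2=\beta\pi_i$, so the products $\pi_w$ form a basis indexed by permutations, with $\pi_u\pi_v=\beta^{\ell(u)+\ell(v)-\ell(u\ast v)}\pi_{u\ast v}$. Set $h_i(t)=1+t\pi_i$. These satisfy the Yang--Baxter relation
\[
h_i(s)h_{i+1}(s\oplus t)h_i(t)=h_{i+1}(t)h_i(s\oplus t)h_{i+1}(s),
\qquad s\oplus t=s+t+\beta st,
\]
and $h_i(s)h_i(t)=h_i(s\oplus t)$. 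Following Fomin--Kirillov, define
\[
\mathfrak{G}^{(\beta)}(x;y)=\prod_{i=n-1}^{1}\ \prod_{j=1}^{n-i} h_{i+j-1}(x_i\ominus y_j),
\]
with the product order read off the staircase pipe dream grid. Here $\ominus$ is the inverse operation for $\oplus$, so that $(x\ominus y)\oplus y=x$. The coefficient of $\pi_w$ in this product is $\GGG^{(\beta)}_w(x;y)$. This is the pipe dream formula: each term corresponds to choosing, at each cell $(i,j)$ of the staircase, either the $1$ or the $t\pi$ part (an elbow or a cross). The first step is to record this, citing Fomin--Kirillov and Knutson--Miller, as the definition of double $\beta$-Grothendieck polynomials.

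The key step is the factorization
\[
\mathfrak{G}^{(\beta)}(x;y)=\mathfrak{G}^{(\beta)}(0;y)\cdot\mathfrak{G}^{(\beta)}(x;0),
\]
where $\mathfrak{G}^{(\beta)}(0;y)$ is related to $\mathfrak{G}^{(\beta)}(y;0)$ by the anti-automorphism $\pi_w\mapsto\pi_{w^{-1}}$, up to replacing $y_j$ by $\ominus y_j$. To prove it, factor each staircase entry as $h_k(x_i\ominus y_j)=h_k(\ominus y_j)h_k(x_i)$. Then use the Yang--Baxter relation repeatedly to move all $y$-factors to the left and all $x$-factors to the right. This is the standard "train argument": one proves by induction on $n$ that the staircase product equals the product over the $y$-staircase times the $x$-staircase. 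Each induction step slides one column of $y$-variables through a row of $x$-variables, and this sliding is exactly a telescoping application of Yang--Baxter. I expect this commutation bookkeeping to be the main obstacle. In particular, one must check carefully that the reversed $y$-staircase yields $\GGG_u(y)$ indexed by $u^{-1}$, with the correct sign convention for $y$ (the usual convention $\GGG_w(x;y)$ with $x_i\ominus y_j$ makes the $y$-part correspond to $\GGG_u(y)$ with the appropriate sign conventions of the paper).

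Finally, I would extract the coefficient of $\pi_w$. From the factorization,
\[
\mathfrak{G}(0;y)\,\mathfrak{G}(x;0)=\Bigl(\sum_u \GGG_u^{(\beta)}(y)\pi_{u^{-1}}\Bigr)\Bigl(\sum_v \GGG^{(\beta)}_v(x)\pi_v\Bigr).
\]
The multiplication rule $\pi_{u^{-1}}\pi_v=\beta^{\ell(u)+\ell(v)-\ell(u^{-1}\ast v)}\pi_{u^{-1}\ast v}$, together with $\ell(u^{-1})=\ell(u)$, then gives exactly the stated sum. Stability under $S_n\hookrightarrow S_{n+1}$ lets us pass to $S_\infty$.
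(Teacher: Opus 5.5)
The paper does not prove this theorem. It quotes the identity from Fomin--Kirillov and points to Dennin's bijective proof by pipe dream rectification. Your argument is the original Fomin--Kirillov algebraic proof, so it is a genuinely different route. In substance it is correct, but two conventions need fixing.

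The Yang--Baxter relation you state holds: the two sides differ by $st(s\oplus t)(\pi_i\pi_{i+1}\pi_i-\pi_{i+1}\pi_i\pi_{i+1})=0$. The train bookkeeping you flag as the main obstacle is short. Multiply the staircase rows top to bottom, each row right to left, and set $A_k(t)=h_{n-1}(t)\cdots h_k(t)$ and $B_k(t)=h_k(t)\cdots h_{n-1}(t)$. Then the $x$-staircase is $A_1(x_1)\cdots A_{n-1}(x_{n-1})$. The $y$-staircase, read column by column, is $B_{n-1}(\eta_{n-1})\cdots B_1(\eta_1)$, where $\eta_j$ denotes the $y$-parameter. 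Downward induction on $k$, with one Yang--Baxter move per step, gives
\[
B_k(s)\,A_k(t)=A_{k+1}(t)\,h_k(s\oplus t)\,B_{k+1}(s).
\]
Push $B_1(\eta_1)$ through all the $A$-blocks, then move the $A$-blocks left past the emitted factors (their indices differ by at least $2$). This produces the index-shifted $(n-1)$-staircase in $x$, followed by the first column $h_1(x_1\oplus\eta_1)\cdots h_{n-1}(x_{n-1}\oplus\eta_1)$ of the double staircase. The remaining $B$-blocks form the shifted $(n-1)$-staircase in $\eta_2,\eta_3,\ldots$, so induction on $n$ finishes.

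\textbf{Product order.} The order you display, $\prod_{i=n-1}^{1}\prod_{j=1}^{n-i}$, is the reverse of the one above, i.e. its image under $\pi_w\mapsto\pi_{w^{-1}}$. With your order the factorization comes out as $\mathfrak{G}(x;0)\,\mathfrak{G}(0;y)$. The order $\mathfrak{G}(0;y)\,\mathfrak{G}(x;0)$ that you claim needs rows top to bottom, right to left; with that reading, putting the $x$-part on the left already fails for $n=3$, $\beta=0$. Relatedly, whether you get $w=u^{-1}\ast v$ or $w=v\ast u^{-1}$ depends on whether a pipe dream's reading word spells $w$ or $w^{-1}$. State that convention explicitly when you say the coefficient of $\pi_w$ is $\GGG_w^{(\beta)}$.

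\textbf{Sign of $y$.} With entries $h_k(x_i\ominus y_j)$ you have $\eta_j=\ominus y_j$, so $\mathfrak{G}(0;y)=\sum_u\GGG_u^{(\beta)}(\ominus y)\,\pi_{u^{-1}}$, as you note yourself. Your last display should therefore carry $\GGG_u^{(\beta)}(\ominus y)$. The statement as written, with $\GGG_u^{(\beta)}(y)$, is the normalization with cell weights $x_i\oplus y_j$. This matches super pipe dreams, where black checkers carry $x_i$ and red checkers carry $y_j$. Use $h_k(x_i\oplus y_j)=h_k(y_j)\,h_k(x_i)$ instead and the argument goes through verbatim.

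\textbf{Passage to $S_\infty$.} The equation $u^{-1}\ast v=w$ forces $u^{-1},v\le w$ in Bruhat order. So no terms outside $S_n$ are missed.

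\textbf{What each route buys.} Yours is short and uniform in $\beta$. It proves one identity $\mathfrak{G}(x;y)=\mathfrak{G}(0;y)\,\mathfrak{G}(x;0)$ in the Demazure algebra, from which all coefficients follow at once. Its only combinatorial input is that the staircase product expands over pipe dreams via the Demazure product. But Yang--Baxter is an identity between sums, so the proof gives no map on individual pipe dreams.

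Dennin's proof, which this paper relies on, realizes the identity bijectively. The flow operators $Y_i^+$ act by ladder moves, which preserve the Demazure permutation and keep every black checker in its row. They push the red checkers northeast of the black ones, and $\operatorname{Rect}(\mathbf P)=(V,U)$ then splits a super pipe dream into its $x$- and $y$-pipe dreams. The rest of the paper needs exactly this explicit map: $\operatorname{ins}(A)=\operatorname{tab}(V)$ and the variant $A\mapsto(\operatorname{ins}(A),\operatorname{ins}(A^\dagger))$ are defined through it. So your argument proves the identity, but it could not replace rectification in what follows.
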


    A combinatorial proof of Theorem \ref{thm:fomin-cauchy} was given by Dennin in \cite{dennin2025cauchy} via \textit{pipe dream rectification}, which sends a \textit{super pipe dream} to a pair of pipe dreams. See Section \todo{ref} for the relevant definitions.

    Recall also that for Grassmannian permutations, their Schubert polynomials are Schur polynomials. Thus, there is a bijection between reduced pipe dreams for Grassmannian permutations and SSYTs. Furthermore, there is a bijection between binary matrices and super pipe dreams for biGrassmannian permutations. Thus, Dennin observed that when restricted to biGrassmannian permutations, pipe dream rectification yields a variant of dual RSK correspondence
    \[
    \operatorname{dRSK}':A \mapsto (\operatorname{ins}(A),\operatorname{ins}(A^{\dagger})),
    \]
    for $A\in\BM_{m\times n}$, the set of $m\times n$ binary matrices. In addition, Dennin conjecture an additional symmetry property, which implies that this variant of dual RSK is indeed dual RSK. Our main theorem is that this conjecture is true.

    \begin{thm}[{\cite[Conjecture 8.9]{dennin2025cauchy}}]\label{thm:main-thm}
        For $A\in\BM_{m\times n}$,
        \[
        \overline{\operatorname{rec}(A)} = \operatorname{ins}(A^\dagger).
        \]
    \end{thm}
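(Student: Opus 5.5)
We will prove the identity by a downward induction on suffixes of $A$, using the pipe dream rectification description of $\operatorname{ins}$ from \cite{dennin2025cauchy}. For $0\le k\le m$ let $A_{>k}$ be the $(m-k)\times n$ binary matrix formed by the rows $k+1,\dots,m$ of $A$. Taking the transpose-complement turns a suffix of rows of $A$ into a suffix of columns of $A^\dagger$. The plan is to show, for every $k$,
\[
\overline{\operatorname{rec}(A_{>k})} = \operatorname{ins}\bigl((A_{>k})^\dagger\bigr),
\]
with $\overline{\,\cdot\,}$ taken with respect to the appropriate rectangle. The base case $k=m$ (the empty matrix) is trivial. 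The case $k=0$ is Theorem \ref{thm:main-thm}. Running the induction downward is natural because pipe dream rectification processes the super pipe dream from the bottom rows upward. So adding the row $k$ to the suffix should correspond to a single local operation on both sides.

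\textbf{Step 1.} Fix conventions. We write $\operatorname{ins}(A)$ and $\operatorname{rec}(A)$ via dual RSK row insertion, reading the rows of $A$ from bottom to top so that suffixes are inserted first. We then check, using the bijection between super pipe dreams for biGrassmannian permutations and binary matrices, that $\operatorname{ins}(A)$ agrees with the $x$-pipe dream produced by rectification. We also record what $\operatorname{ins}(A^\dagger)$ is on the pipe dream side.

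\textbf{Step 2.} Describe both sides when the new row $r=(a_{k1},\dots,a_{kn})$ is prepended.
\begin{itemize}
\item On the recording side, the new row inserts the column indices $\{j: a_{kj}=1\}$ into $\operatorname{ins}(A_{>k})$, with its cells labelled $k$ in $\operatorname{rec}$. The new cells form a vertical strip of size $|r|$.
\item Under the complement bar, this becomes a statement about which cells of the complementary shape carry the label $k$. In $\operatorname{ins}(A^\dagger)$ these are exactly the entries $k$. The column $k$ of $A^\dagger$ has its $1$'s precisely at the positions $j$ with $a_{kj}=0$.
\end{itemize}
So the inductive step reduces to a \emph{commutation lemma}. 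Inserting the new column of $A^\dagger$, which contains the letters $k$, into $\operatorname{ins}((A_{>k})^\dagger)$ must produce the complement of the shape obtained by inserting $r$ into $\operatorname{ins}(A_{>k})$. Moreover, the previously placed letters must be rearranged in the same way as the bar of the old recording tableau.

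\textbf{Step 3.} Prove the commutation lemma via rectification. Adding one row to a super pipe dream changes the rectification by a sequence of local moves, the slides or ladder/chute-type moves of \cite{dennin2025cauchy}. I would track the crosses and elbows in the new row. A cross in the $x$-part becomes an elbow in the $y$-part, and vice versa. This is exactly the dagger swap of $0$ and $1$. Each local move on one side should then correspond to its mirror on the other side. This would show that the $y$-output, read as $\operatorname{ins}(A^\dagger)$, changes by adding the complementary strip.

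The main obstacle is Step 3. The insertion of $r$ into $\operatorname{ins}(A_{>k})$ can bump entries in a complicated way. The claim that this bumping path is complementary to the bumping on the $A^\dagger$ side is where the real content lies. My first attempt would be to prove a shape-level statement first: the shape of $\operatorname{ins}(A_{>k})$ is complementary to the shape of $\operatorname{ins}((A_{>k})^\dagger)$ within the $(m-k)\times n$ rectangle. I would get this from Greene-type invariants, namely the longest strictly increasing and weakly decreasing families of entries. The labels would then follow by induction, because the bar operation is determined by shapes of successive suffixes. Concretely, $\overline{\operatorname{rec}}$ is encoded by the chain of shapes of $\operatorname{ins}(A_{>k})$ for $k=m,\dots,0$. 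Likewise, $\operatorname{ins}(A^\dagger)$ restricted to letters $\ge k+1$ has the shape of $\operatorname{ins}((A_{>k})^\dagger)$, since insertion of a column suffix commutes with restricting to the larger letters. If the shape complementarity holds for all suffixes, the two tableaux have complementary shape chains and hence coincide. If this works, it avoids tracking bumping paths entirely.
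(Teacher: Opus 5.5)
\textbf{Gap: the shape complementarity is assumed, not proved.} Your closing reduction is sound, and it is the skeleton of the paper's proof. It rests on three facts. A reverse semistandard tableau is determined by the chain of shapes of its restrictions to the letters $\ge r$. The restriction of $\overline{\operatorname{rec}(A)}$ to large letters is the complement (same number of columns, smaller alphabet) of the recording tableau of the corresponding suffix. The restriction of $\operatorname{ins}(A^\dagger)$ to large letters is the insertion tableau of the corresponding suffix of $A^\dagger$; the paper gets this from its lower-row invariance and filling identity. The paper's induction step (labels $\ge i$ unchanged, labels $\le i-2$ forced by the filling, label $i-1$ forced by column heights) is the label-by-label form of ``equal shape chains give equal tableaux''.

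With this reduction, everything rests on the shape complementarity $\operatorname{shape}(\operatorname{ins}(B^\dagger))=\operatorname{shape}(\operatorname{ins}(B))^\dagger$ for every suffix $B$, and you do not prove it. Greene's theorem does not supply it. It computes the shape of $\operatorname{ins}(B)$ from maximal unions of chains among the $1$-entries of $B$, and the shape of $\operatorname{ins}(B^\dagger)$ from chains among the $0$-entries of $B$, but it says nothing relating the two. Take an $m\times n$ matrix $B$ with $\lambda=\operatorname{shape}(\operatorname{ins}(B))$ and $|B|$ its number of $1$-entries. Already the first column of $\lambda^\dagger$ has height $n-\lambda_m=n-|B|+(\lambda_1+\cdots+\lambda_{m-1})$. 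So the longest suitable chain of zeros must equal $n-|B|$ plus the maximal size of a union of $m-1$ chains of ones. You would need such a min--max duality for every $k$. That duality is the real content here, not a routine consequence of Greene.

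\textbf{How to close it.} The paper does not prove this by hand either. Its shape-compatibility proposition imports it from Dennin's Proposition~8.8, namely $\operatorname{dRSK}'(A)=(\operatorname{ins}(A),\operatorname{ins}(A^\dagger))\in\operatorname{RSSYT}(\lambda,m)\times\operatorname{RSSYT}(\lambda^\dagger,n)$, which comes out of pipe dream rectification. It then applies this to every padded suffix $A_i$. You already rely on Dennin for the rectification description of $\operatorname{ins}$, so citing that result closes your gap. It also makes the mirrored-local-moves program of Step~3 unnecessary; as written that program is only a heuristic.

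\textbf{Convention issue in Step~1.} In the paper, $\operatorname{ins}(A)=\operatorname{tab}(V)$ has entries in $[m]$, because black checkers never change rows. It is built by inserting the columns $C_n,\dots,C_1$, and $\operatorname{rec}(A)$ is labelled by column indices. Your row-by-row version labels $\operatorname{ins}$ by column indices, so it cannot ``agree with the $x$-pipe dream''. It is really the paper's statement for $A^t$. That is harmless because $(A^t)^\dagger=(A^\dagger)^t$, but only once you check that your insertion rule produces the same insertion tableaux as the paper's, so that your $\operatorname{rec}$ matches its growth-chain definition.
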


\section*{Acknowledgments}

I would first like to thank my mentor, Son Nguyen, for his guidance and support. His feedback and insight were invaluable throughout this project.

This work was carried out as part of MIT UROP. In particular, I would like to thank the Emma and Joseph Cherian UROP Fund for funding and providing the opportunity to pursue this research.

Finally, I would like to thank my friends and family for their support, encouragement, and willingness to listen to me talk through ideas during the project.

\section{Background}

\subsection{Pipe dreams and super pipe dreams}

    A \textit{pipe dream} of size $n$ is a collection of tiles on positions $(i,j)$, for $1\leq i,j\leq n$, such that each tile is either a cross \+ or a bump \bt. Here, $i$ labels the rows from top to bottom and $j$ labels the columns from left to right. A pipe dream is called \textit{reduced} if no two pipes cross more than once.

    The permutation of a pipe dream is obtained as follows. Label the pipes $i + j -1$. Then, read the pipes first left to right until the row ends then move to the next row from top to bottom. If two pipes have already crossed, then their next crossing is not counted. 

    \begin{example}
        Suppose our permutation is $w = 2143$. We can see that this permutation is made from $s_1s_3$. Because $w \in S_4$, our pipe dream is bounded by $1 \leq i, j \leq 4$. This gives us the three following reduced pipe dreams. 

        \begin{figure}[htbp]
        \centering
        \begin{subfigure}[b]{0.3\textwidth}
            \centering
            \includegraphics[scale = 0.35]{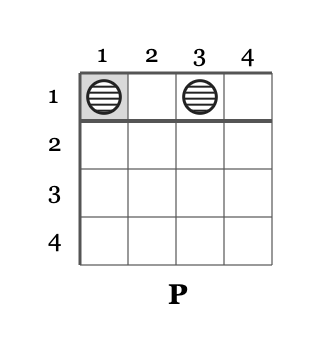}
        \end{subfigure}
        \hfill
        \begin{subfigure}[b]{0.3\textwidth}
            \centering
            \includegraphics[scale = 0.35]{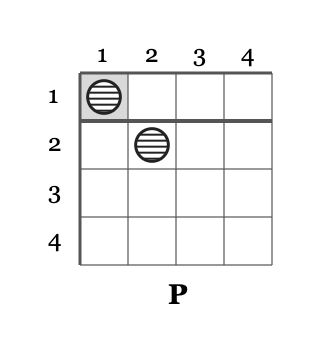}
        \end{subfigure}
        \hfill
        \begin{subfigure}[b]{0.3\textwidth}
            \centering
            \includegraphics[scale = 0.35]{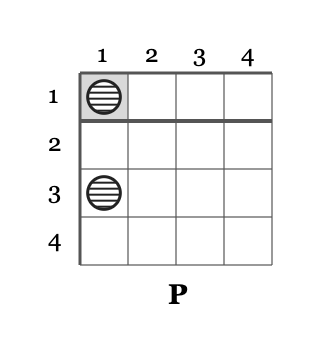}
        \end{subfigure}
        \caption{Reduced Pipe Dreams for Permutation $w=2143$}
        \label{fig:three_images}
    \end{figure}
    \end{example}

    The weight of a pipe dream $P$ is $\operatorname{wt}(P):=\prod_{p \text{ \+-tile}} x_{\operatorname{row}(p)}$. In addition, let $|P|$ denote the number of crossings in $P$. For a permutation \(w\), its \(\beta\)-Grothendieck polynomial is
    
    \[
        G_w^{(\beta)}(x)
        =
        \sum_{P\in PD^+(w)}
        \beta^{|P|-\ell(w)}
        \operatorname{wt}(P),
    \]
    where \(PD^+(w)\) is the set of not-necessarily-reduced pipe dreams with permutation \(w\), and
    \(\ell(w)\) is the length of \(w\).
    
    Observe that the contribution of a pipe dream comes only from the \+-tiles. Thus, it is enough to record the positions of the \+-tiles only.
    
    A \textit{super pipe dream} $\mathbf P=(P_x,P_y)$ consists of two colored collections of checkers. We represent the checkers in \(P_x\) by black, horizontally striped checkers and those in \(P_y\) by red, vertically striped checkers.
    
    The two colors encode the two components of the super pipe dream and allow the pipe-dream operations introduced below to distinguish the horizontal and vertical behavior of the corresponding pipes.
    
    \begin{example}
    An example of a super pipe dream is shown in Figure~\ref{fig:spd}.
    
    \begin{figure}[H]
        \centering
        \includegraphics[width=0.5\linewidth]{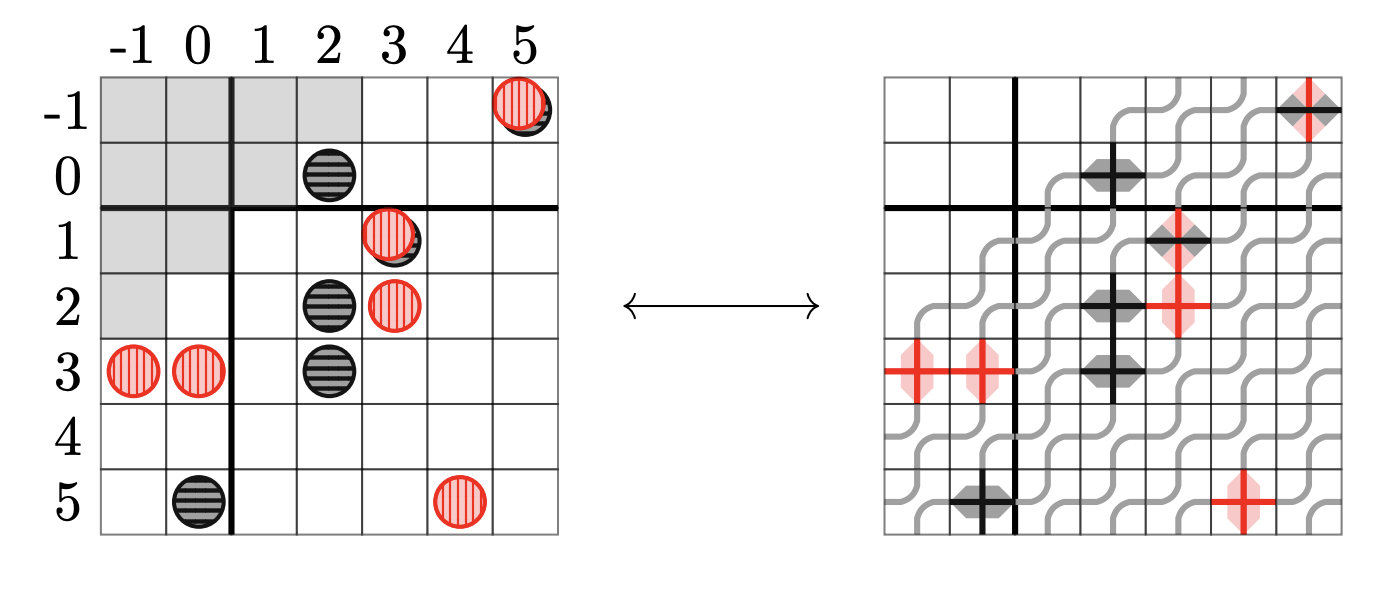}
        \caption{
            A super pipe dream
        }
        \label{fig:spd}
    \end{figure}
    \end{example}
    
    The two colors are related by transposition: locally, the behavior of a red checker is the transpose of the corresponding behavior of a black checker. This symmetry motivates the following operations.
    
    For a super pipe dream
    \[
        \mathbf P=(P_x,P_y),
    \]
    define
    \begin{align*}
        \text{Complement:}\qquad
        \overline{\mathbf P}
        &:= (P_y,P_x),\\
        \text{Transpose:}\qquad
        \mathbf P^{\mathsf t}
        &:= (P_x^{\mathsf t},P_y^{\mathsf t}),\\
        \text{Adjoint:}\qquad
        \mathbf P^\dagger
        &:= (P_y^{\mathsf t},P_x^{\mathsf t}).
    \end{align*}
    In particular,
    \[
        \mathbf P^\dagger
        =
        \overline{\mathbf P}^{\,\mathsf t}
        =
        \overline{\mathbf P^{\mathsf t}}.
    \]

    Let
    \[
        A=(A_{ij})\in BM_{m\times n}
    \]
    be a binary matrix; that is,
    \[
        A_{ij}\in\{0,1\}
    \]
    for every \(i\in[m]\) and \(j\in[n]\). Thus \(A\) has \(m\) rows and \(n\) columns.
    
    We associate a super pipe dream to \(A\) by placing
    \[
        \begin{cases}
            \text{a black checker at }(i,j), & A_{ij}=1,\\
            \text{a red checker at }(i,j),   & A_{ij}=0.
        \end{cases}
    \]
    Hence, every tile of the \(m\times n\) rectangle contains exactly one checker, and every binary matrix determines a corresponding super pipe dream.
    
    \begin{example}
        Let
        \[
            A=
            \begin{bmatrix}
                1 & 1 & 0\\
                0 & 1 & 0\\
                1 & 0 & 1
            \end{bmatrix}.
        \]
        Replacing every \(1\) by a black checker and every \(0\) by a red checker gives the super pipe dream shown in Figure~\ref{fig:bSPD}.
        
        \begin{figure}[H]
            \centering
            \includegraphics[width=0.2\linewidth]{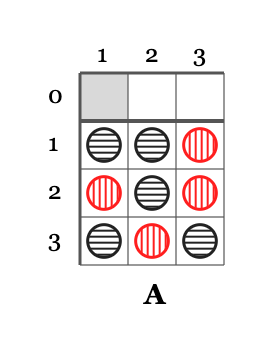}
            \caption{The super pipe dream associated with the binary matrix \(A\).}
            \label{fig:bSPD}
        \end{figure}
    \end{example}

\subsection{Rectification}

We review the flow operators defined in \cite[Section~4]{dennin2025cauchy}. Let \(\mathbf P\) be a super pipe dream with no red checkers in
column \(i+1\). First, we define the operator \(Y_i^+\), which maps \(\mathbf P\) to a super pipe dream $\textbf{Q}$ by flowing all red checkers in column \(i\) into column \(i+1\).
In particular, \(\mathbf Q\) has no red checkers in column
\(i\), and the operation preserves the number of black checkers in
each row.

Explicitly, \(Y_i^+\) is obtained by repeatedly choosing the lowest
red checker in column \(i\). If there is a black checker immediately to
its right, the move is the \(1\)-ladder as shown in Figure \ref{fig:Y^+ e.g. 1}.
Otherwise, one performs the larger
ladder move: the chosen red checker moves to the northeast corner of the
first available ladder, and the intermediate checkers are shifted
horizontally as shown in Figure \ref{fig:Y^+ e.g. 2}. In either case black checkers may move horizontally, but they do not change rows. 

Additionally, notice that through ladder moves, the underlying permutation $\mathbf{a}$ of $\mathbf{P}$ does not change. 

\begin{figure}[h!]
 \centering
    \begin{subfigure}[b]{0.4\textwidth}
        \centering
        \includegraphics[scale = 0.25]{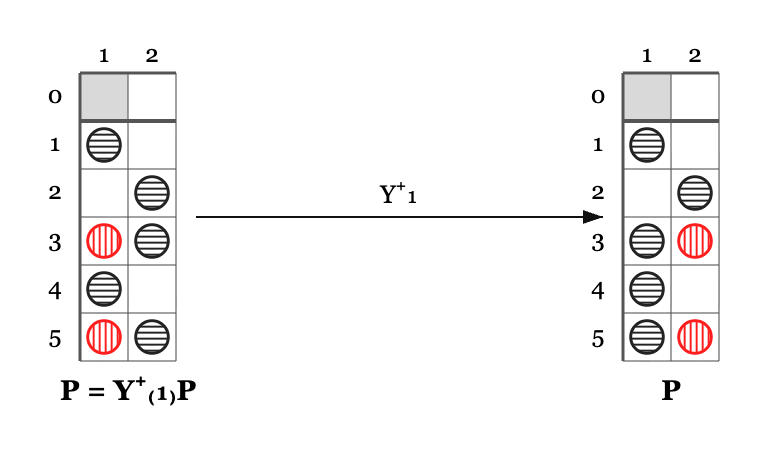}
        \caption{$1$-ladder $Y^+_i$ move}
        \label{fig:Y^+ e.g. 1}
    \end{subfigure}
 \quad\quad
    \begin{subfigure}[b]{0.4\textwidth}
        \centering
        \includegraphics[scale = 0.25]{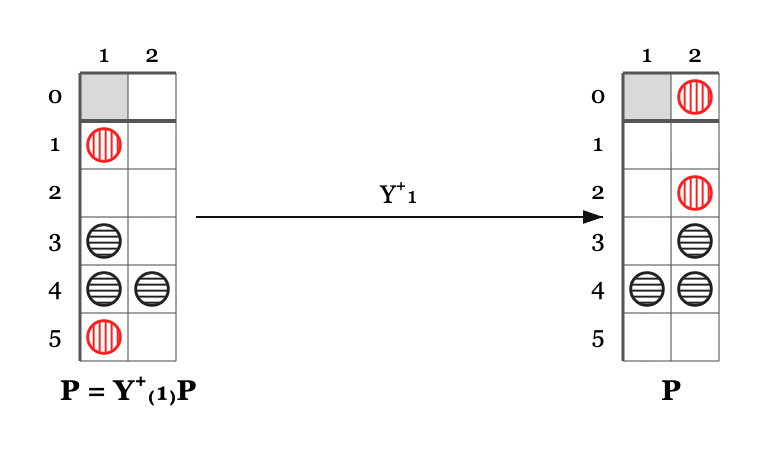}
        \caption{Big Ladder $Y^+_i$ Move}
        \label{fig:Y^+ e.g. 2}
    \end{subfigure}

    \caption{Types of Ladder Moves}
\end{figure}
    
For \(i\in \mathbb Z\), we define
\[
    Y_{\geq i}^+
    :=
    Y_i^+Y_{i+1}^+Y_{i+2}^+\cdots .
\]
When applied to a fixed super pipe dream, $Y_{\geq i}^+$ is a finite composition.
It moves all red checkers in columns weakly to the right of \(i\) one
column to the right.

We also define
\[
    Y^+
    :=
    \cdots Y_{-1}^+Y_0^+Y_1^+\cdots .
\]
This composition is also finite when applied to any fixed super pipe dream and
flows every red checker one column to the right.

Finally, for integers \(i\leq m\), suppose that \(\mathbf P\) has no red
checkers in columns ranging from $i+1$ to $m+1$. Define
\[
    Y_i^{+m}\mathbf P
    :=
    \bigl(Y_m^+Y_{m-1}^+\cdots Y_i^+\bigr)\mathbf P .
\]
This composition successively flows the red checkers originally in
column \(i\) until they reach column \(m+1\).

\begin{figure}[H]
    \centering
    \includegraphics[scale = 0.18]{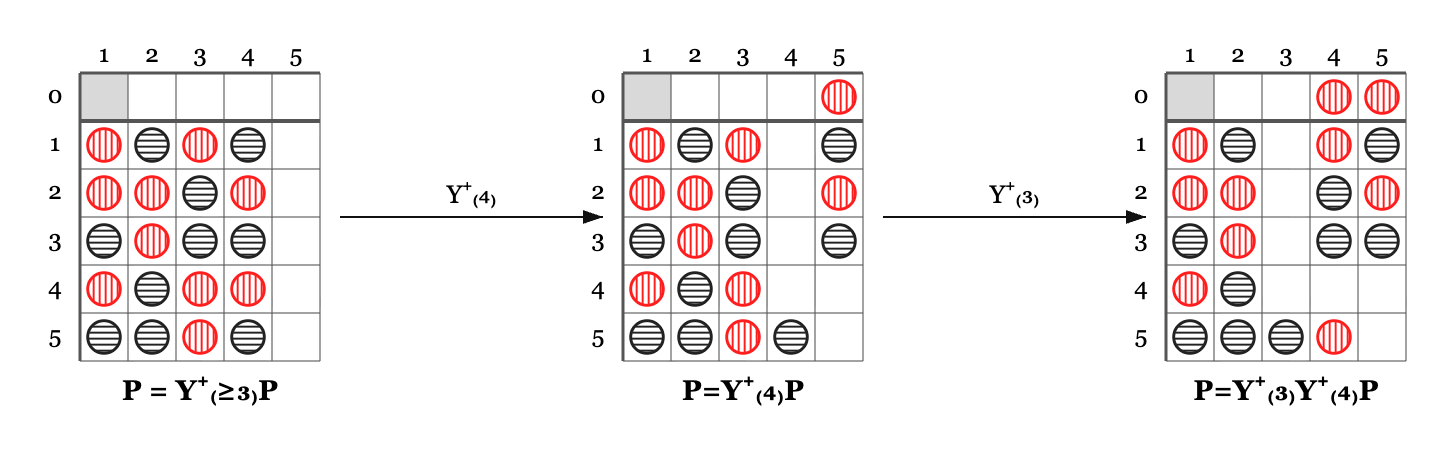}
    \caption{Illustration of $Y^+_{\geq 3}$}
    \label{fig:Y^+_3}
\end{figure}

\begin{figure}[H]
    \centering
    \includegraphics[scale = 0.18]{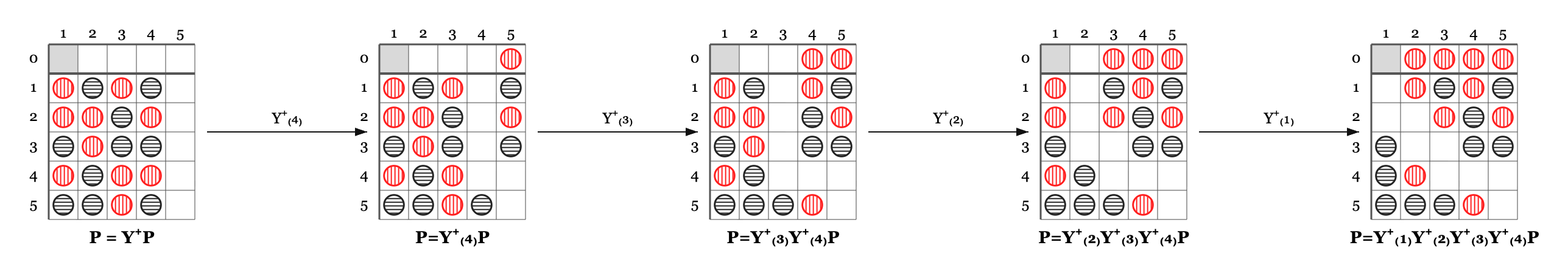}
    \caption{Illustration of $Y^+$}
    \label{fig:Y^+}
\end{figure}

Repeated applications of \(Y^+\) eventually separate the red and black
components of a super pipe dream. More precisely, for a super pipe dream
\(\mathbf P\), choose \(N\) sufficiently large so that, in
\[
    \mathbf P'=(Y^+)^N\mathbf P,
\]
all red checkers lie northeast of all black checkers. Let \(V\) be the
ordinary pipe dream determined by the black component of \(\mathbf P'\),
and let \(U\) be the ordinary pipe dream obtained from the red component
using the transpose-and-shift convention of
\cite[Section~6]{dennin2025cauchy}. We then define
\[
    \operatorname{Rect}(\mathbf P)=(V,U).
\]
Thus rectification separates a super pipe dream into two ordinary pipe
dreams corresponding to its black and red components.

\begin{figure}[H]
    \centering
    \includegraphics[scale = 0.3]{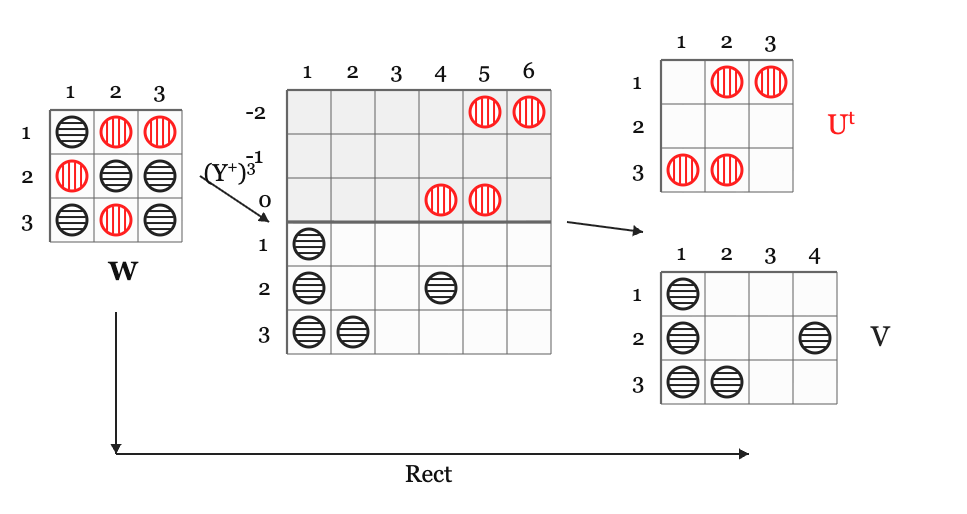}
    \caption{Example of pipe dream rectification.}
    \label{fig:placeholder}
\end{figure}

\subsection{Insertion and recording}

Let \(V\) be a reduced pipe dream for a Grassmannian permutation, visualized as a tiling in which checker locations correspond to crossing tiles and unoccupied locations correspond to elbow tiles. Following \cite[Section~8.1]{dennin2025cauchy}, let \(\operatorname{tab}(V)\) denote the reverse semistandard Young tableau associated with \(V\).

\begin{example}

Suppose we have a reduced pipe dream $V = \begin{bmatrix}
    0 & \cdot \\ 0 & \cdot \\ 0 & 0
\end{bmatrix}$, we can find $\operatorname{tab}(V)$ by first seeing which pipes the checker belongs on. Checkers on the same pipe belong in the same row in $\operatorname{tab}(V)$, and rows are organized in strictly decreasing order. 
\begin{figure} [H]
    \centering
    \includegraphics[scale = 0.18]{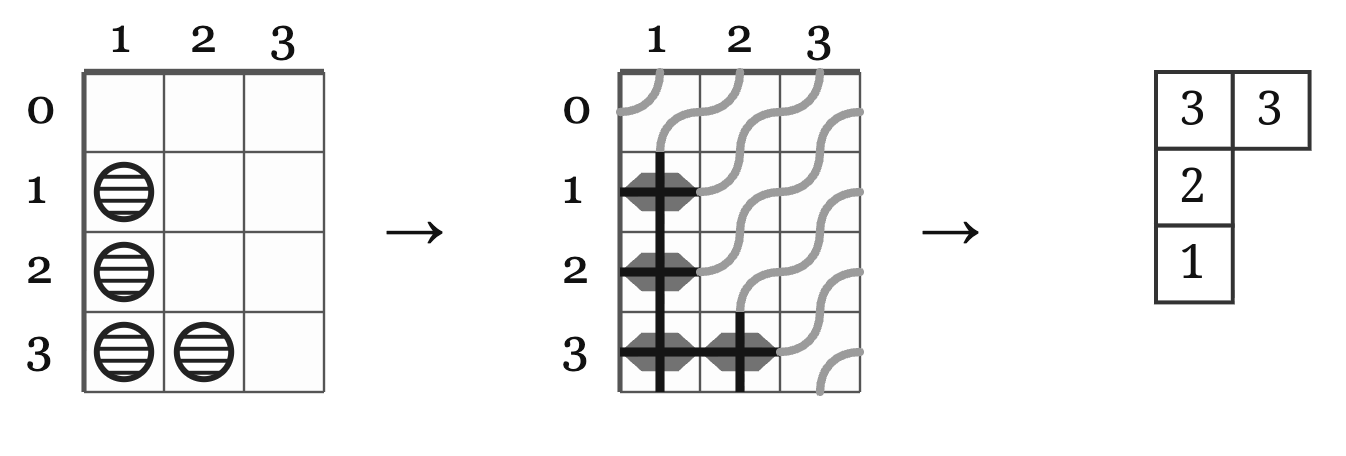}
    \caption{Reduced Pipe Dream $V$ in $tab(V)$}
    \label{fig:tab}
\end{figure}
In other words, the entries corresponding to the crossings along a fixed horizontal pipe form a row of \(\operatorname{tab}(V)\), and each crossing is recorded by its row index in the pipe dream. The resulting tableau is weakly decreasing along rows and strictly decreasing down columns.
\end{example}

Let
\[
    A\in \mathrm{BM}_{m\times n},
\]
and identify \(A\) with its corresponding super pipe dream. If
\[
    \operatorname{Rect}(A)=(V,U),
\]
we define the \textit{insertion tableau} of \(A\) by
\[
    \operatorname{ins}(A):=\operatorname{tab}(V).
\]

The \textit{recording tableau} records which column of \(A\) creates each box of
\(\operatorname{ins}(A)\).

For a tableau \(X\), let \(\operatorname{shape}(X)\) denote its Young
diagram. If \(Y\subseteq X\), let \(X\setminus Y\) denote the skew
subtableau consisting of the boxes of \(X\) that do not belong to \(Y\).
If every entry of a tableau \(Y\) is equal, let \(\operatorname{number}(Y)\)
denote this common entry.

Define nested tableaux
\[
    T_n\subseteq T_{n-1}\subseteq\cdots\subseteq T_1
\]
by requiring that
\[
    \operatorname{shape}(T_i)
    =
    \operatorname{shape}\bigl(\operatorname{ins}(A_i')\bigr)^t
    \qquad\text{for each }i\in[n],
\]
together with the labeling conditions
\[
    \operatorname{number}(T_n)=n
\]
and
\[
    \operatorname{number}(T_{i-1}\setminus T_i)=i-1
    \qquad\text{for }2\leq i\leq n.
\]
We then define
\[
    \operatorname{rec}(A):=T_1.
\]

\begin{example}
    Let $A = \begin{bmatrix}
        1 & 0 & 1 \\ 0 & 1 & 1
    \end{bmatrix}$.\\
    In this case, $n =3$ so let's start with $T_3$. Here, $A'_3 = \begin{bmatrix}
        1 \\ 1
    \end{bmatrix}$ so $ins(A'_3) = \begin{ytableau}
        2 \\ 1
    \end{ytableau}$. Because $\operatorname{shape}(T_3) = \operatorname{shape}(\operatorname{ins}(A'_3))^t$ and $\operatorname{number}(T_3) = 3$, $T_3 = \begin{ytableau}
        3 & 3
    \end{ytableau}$. \\
    Next, let's find $T_2$. $A'_2 = \begin{bmatrix}
        0 & 1 \\ 1 & 1
    \end{bmatrix}$, rectifying, we get $\operatorname{ins}(A'_2) = \begin{ytableau}
        2 & 2 \\ 1
    \end{ytableau}$. By $\operatorname{shape}(T_3) = \operatorname{shape}(\operatorname{ins}(A'_3))^t$ and $\operatorname{number}(T_{i-1}\setminus T_i)=i-1$, we get $T_2 = \begin{ytableau}
        3 & 3 \\ 2
    \end{ytableau}$. \\
    Finally, let's find $T_1 = rec(A)$. $A'_1 = A = \begin{bmatrix}
        1 & 0 & 1 \\ 0 & 1 & 1
    \end{bmatrix}$, rectifying as shown in Figure \ref{fig:rectninsA}, we get $\operatorname{ins}(A'_2) = \begin{ytableau}
        2 & 2 \\ 1 & 1
    \end{ytableau}$. Hence, $T_1 = rec(A) =\begin{ytableau}
        3 & 3 \\ 2 & 1
    \end{ytableau}$. 
    Notice that $T_n\subseteq T_{n-1}\subseteq\cdots\subseteq T_1$. 
    \begin{figure}[H]
        \centering
        \includegraphics[scale = 0.3]{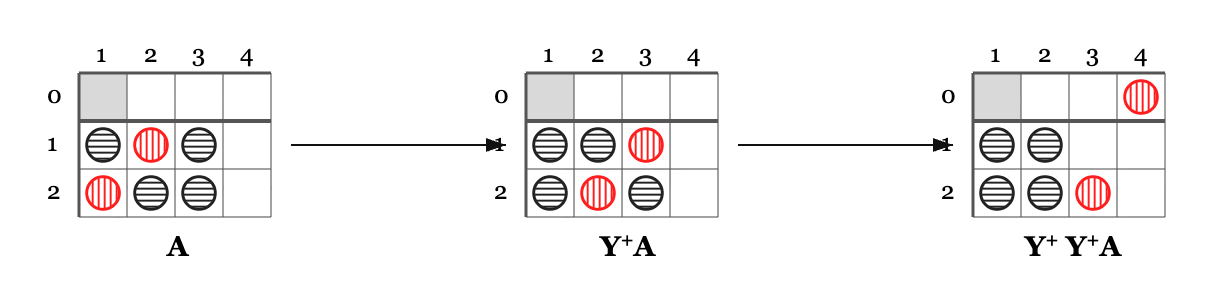}
        \includegraphics[scale = 0.3]{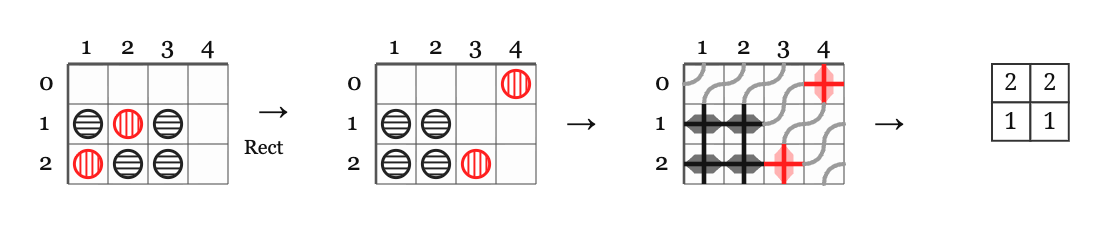}
        \caption{Exact Rectification and Insertion of $A$}
        \label{fig:rectninsA}
    \end{figure}
\end{example}

\subsection{Complementary and full tableaux}
Let \(T\) be a tableau created by some $A \in BM_{m\times n}$.

Define \(\mathbf{T}\) to be the \emph{full tableau} if \(\mathbf{T}\) has dimensions $m \times n$ with each row containing entries \(i\in[n]\) and each column containing entries \(j\in[m]\), arranged according to the reverse semistandard convention. \\

We define the complement tableau \(\overline{T}\) by reversing the order of the columns of \(T\) and taking the set-theoretic complement of the entries in each column.

More precisely, for each \(i\in[n]\),
\[
    \operatorname{col}_i(\overline{T})
    :=
    [m]\setminus
    \operatorname{col}_{m+1-i}(T),
\]
where \(\operatorname{col}_i(T)\) denotes the set of entries appearing
in the \(i\)-th column of \(T\).

Thus, for every \(i\in[n]\), the union of the \((m+1-i)\)-th column of \(T\) and the \(i\)-th column of \(\overline{T}\) gives the corresponding column of the full tableau \(\mathbf{T}\). In other words,
\[
    \operatorname{col}_{m+1-i}(T)
    \cup
    \operatorname{col}_i(\overline{T})
    =
    [m].
\]

\begin{example}
    For $T$ created by $A \in BM_{4 \times 4}$, if $T = \begin{ytableau}
        4 & 4 & 3 \\ 3 & 2 & 1 \\ 1 
    \end{ytableau}$, then $\overline{T} = \begin{ytableau}
        4 & 4 & 3 & 2 \\ 3 & 2 & 1\\ 2  \\ 1
    \end{ytableau} $. 
    
\end{example}

\section{Proof sketch of Theorem \ref{thm:main-thm}}

Let us illustrate the proof idea through an example.

Let $A = \begin{bmatrix}
    1 & 0 & 1 \\ 1 & 1 & 0 \\
    0 & 1 & 1 \\ 1 & 0 & 1 
\end{bmatrix} \in BM_{4 \times 3}$.We want to show $\overline{\rec(A)} = \ins(A^\dagger)$.

Let
\[
A_1 = A, \quad A_2 = \begin{bmatrix}
    0 & 0 & 1 \\ 0 & 1 & 0 \\
    0 & 1 & 1 \\ 0 & 0 & 1 
\end{bmatrix}, \quad A_3 = \begin{bmatrix}
    0 & 0 & 1 \\ 0 & 0 & 0 \\
    0 & 0 & 1 \\ 0 & 0 & 1  
\end{bmatrix}.
\]
and $A'_i$ be $A_i$ with the first $i-1$ columns removed. Similarly, let $R_i = A_i^\dagger$ or \[R_1 = R = \begin{bmatrix}
    0 & 0 & 1 & 0 \\
    1 & 0 & 0 & 1 \\ 0 & 1 & 0 & 0
\end{bmatrix}, \quad R_2 = \begin{bmatrix}
    1 & 1 & 1 & 1 \\
    1 & 0 & 0 & 1 \\ 0 & 1 & 0 & 0
\end{bmatrix}, \quad R_3 = \begin{bmatrix}
     1 & 1 & 1 & 1 \\
    1 & 1 & 1 & 1 \\ 0 & 1 & 0 & 0
\end{bmatrix}\] and $R_i'$ be $R_i$ but with the first $i-1$ rows replaced with $\cdot$ meaning we shift the do not place any checker in those rows in the pipe dream notation.

We will show $\overline{\rec(A)} = \ins(A^\dagger)$ by induction.

For the base case,  Figure \ref{fig:rectA_4} shows that $\ins(A'_3) = \begin{ytableau}
        4 \\ 3 \\ 1
    \end{ytableau}$, meaning $T_3 =
    \begin{ytableau}
        3 & 3 & 3
    \end{ytableau}$.

\begin{figure}[H]
    \centering
    \includegraphics[scale = 0.3]{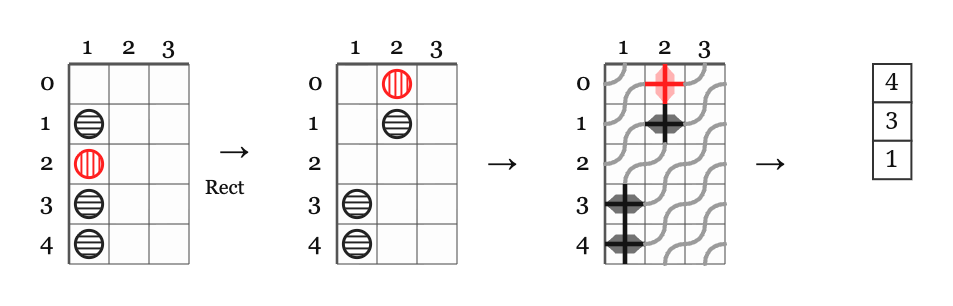}
    \caption{Rectification and Tableaux of $A'_3$}
    \label{fig:rectA_4}
\end{figure}

Similarly, one can check that we also have $\rec(A_3) = \begin{ytableau}
        3 & 3 & 3
    \end{ytableau} = T_3$.

On the other hand, Figure \ref{fig:RectR'_4} shows that $\ins(R'_3) = \begin{ytableau}
        3
    \end{ytableau}$ and hence $\ins(R_3) = \fill(\ins(R'_3)) = \begin{ytableau}
        3 & 2 & 2 & 2 \\ 2 & 1 & 1 & 1 \\ 1
    \end{ytableau}$. 

\begin{figure}[H]
    \centering
    \includegraphics[scale = 0.3]{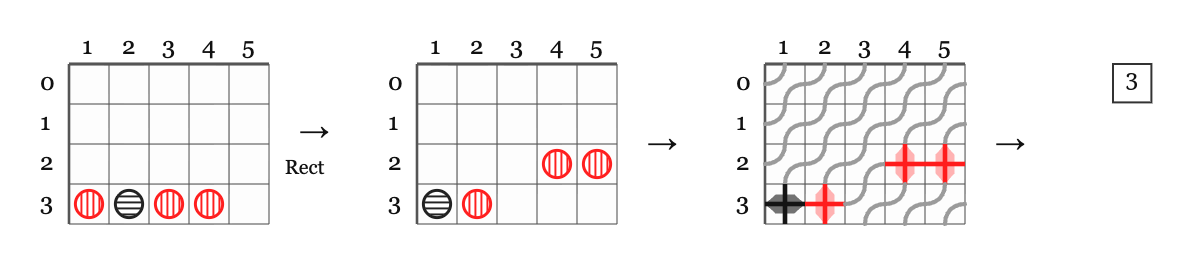}
    \caption{Rectification and Tableaux of $R'_3$}
    \label{fig:RectR'_4}
\end{figure}

Hence, we do have $\overline{\rec(A_3)} = \ins(R_3)$.

Now we continue to $A_2$ and $R_2$.

Figure \ref{fig:A3} shows that $\ins(A'_2) = \begin{ytableau}
        4 & 3 \\
        3 & 1 \\
        2
    \end{ytableau}.$ Using the definition of $\rec()$, since $\operatorname{number}(T_{i-1}\setminus T_i)=i-1$ and $\operatorname{shape}(T_i)
    =
    \operatorname{shape}\bigl(\operatorname{ins}(A_i')\bigr)^t$, we have $ T_2 = \begin{ytableau}
        3 & 3 & 3 \\
        2 & 2
    \end{ytableau}$. 

\begin{figure}[H]
    \centering
    \includegraphics[scale = 0.3]{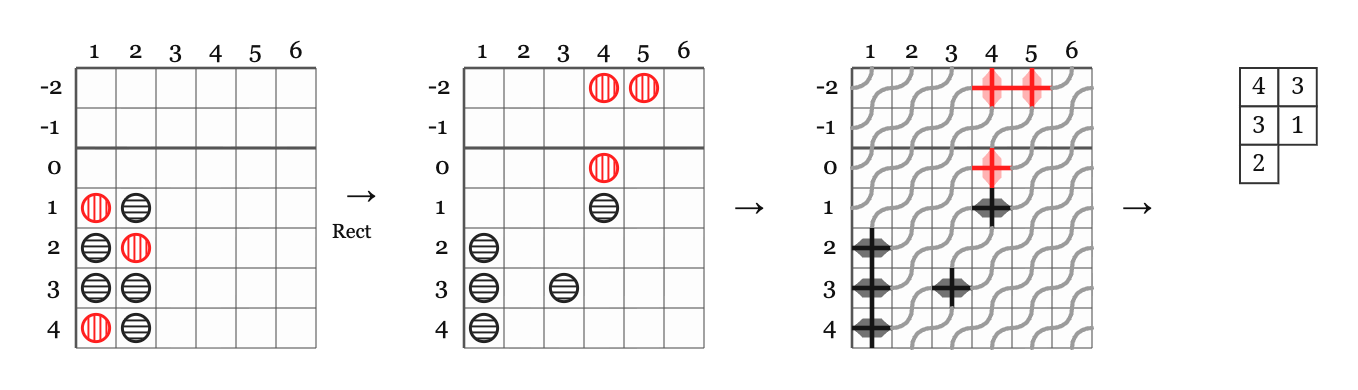}
    \caption{Rectification and Tableaux of $A'_2$}
    \label{fig:A3}
\end{figure}

Similarly, we also check that $ \rec(A_2) = \begin{ytableau}
        3 & 3 & 3 \\
        2 & 2
    \end{ytableau} = T_2$ 
    
On the other hand, Figure \ref{fig:R3} shows that $\ins(R'_2) = \begin{ytableau}
        3 & 2 \\ 2
    \end{ytableau}$, hence, $\ins(R_3) = \fill(\ins(R'_2)) = \begin{ytableau}
        3 & 2 & 1 & 1 \\ 2 & 1 \\ 1
    \end{ytableau}$. 

\begin{figure}[H]
    \centering
    \includegraphics[scale = 0.3]{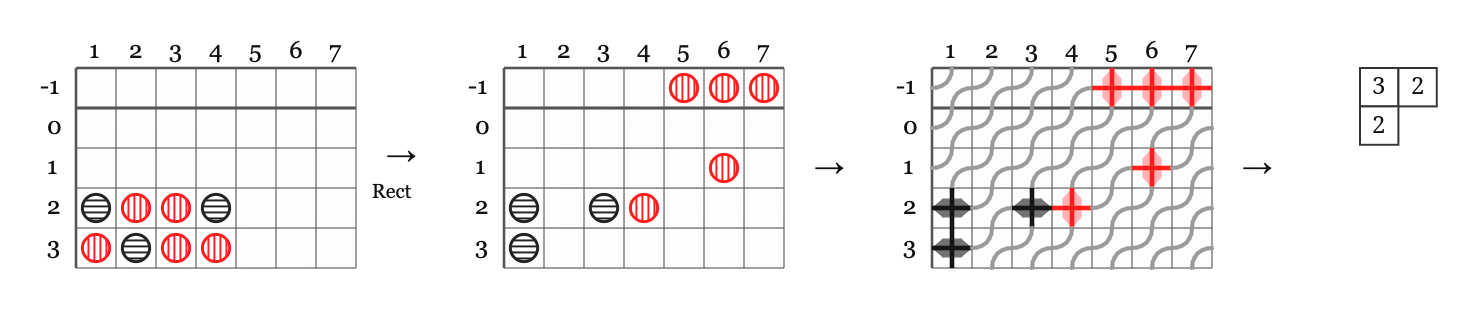}
    \caption{Rectification and Tableaux of $R'_2$}
    \label{fig:R3}
\end{figure}

Therefore, $\overline{\rec(A_2)} = \ins(R_2).$

Observe that to get from $\rec(A_3)$ to $\rec(A_2)$, we added only blocks of $2$'s and the shape of the $3$'s remain the same. This is because we added the second column which can only add $2$'s to the recording tableau by definition and does not change the shape of the recording tableaux of column $3$. Additionally, notice this is exactly the same as how we get from $T_3$ to $T_2$. 

On the other hand, to get from $\ins(R'_3)$ to $\ins(R'_2)$, we also only add $2$'s because black checkers stay in the same row after rectification. Additionally, the $3$'s stay the same because inserting a row checkers above did not interfere with the position after rectification of the black checkers in row $3$.

Furthermore, because the operation $\fill()$ fills all position where $ i \leq 2$, we have $\overline{\rec(A_3)} = \ins(R_3)$. In addition, $\rec(A_2)$ and $\ins(R_2)$ preserves the shapes of $\rec(A_3)$ and $\ins(R_3)$ respectively. Hence, as long as the $2$'s are in complementary positions in $\rec(A_2)$ and $\ins(R_2)$, which is true in this case, then we have $\overline{\rec(A_2)} = \ins(R_2)$.

Let's see this again for $A_1$ and $R_1$. Figure \ref{fig:A2} shows that $\ins(A'_2) = \begin{ytableau}
        4 & 4 & 3\\
        3 & 2 & 1 \\
        2 \\
        1
    \end{ytableau}$. By definition of $\rec()$, we have $\rec(A_1) = T_1 = \begin{ytableau}
        3 & 3 & 3 & 1\\
        2 & 2 \\
        1 & 1
    \end{ytableau}$. Observe that, again, the positions of $2$'s and $3$'s in $T_1$ are the same as in $T_2$.

\begin{figure}[H]
    \centering
    \includegraphics[scale = 0.3]{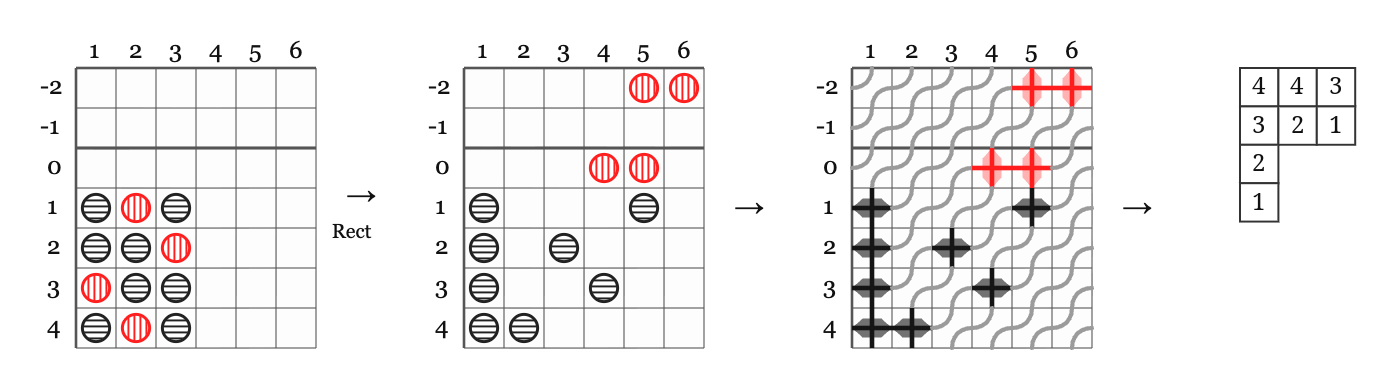}
    \caption{Rectification and Tableaux of $A'_1$}
    \label{fig:A2}
\end{figure}

On the other hand, Figure \ref{fig:R2} shows that $\ins(R'_1) = \ins(R_1) = \begin{ytableau}
        3 & 2 \\ 2 & 1
    \end{ytableau}$. As expected, the positions of $2$'s and $3$'s in $\ins(R_1)$ are the same as in $\ins(R_2)$.

\begin{figure}[H]
    \centering
    \includegraphics[scale = 0.3]{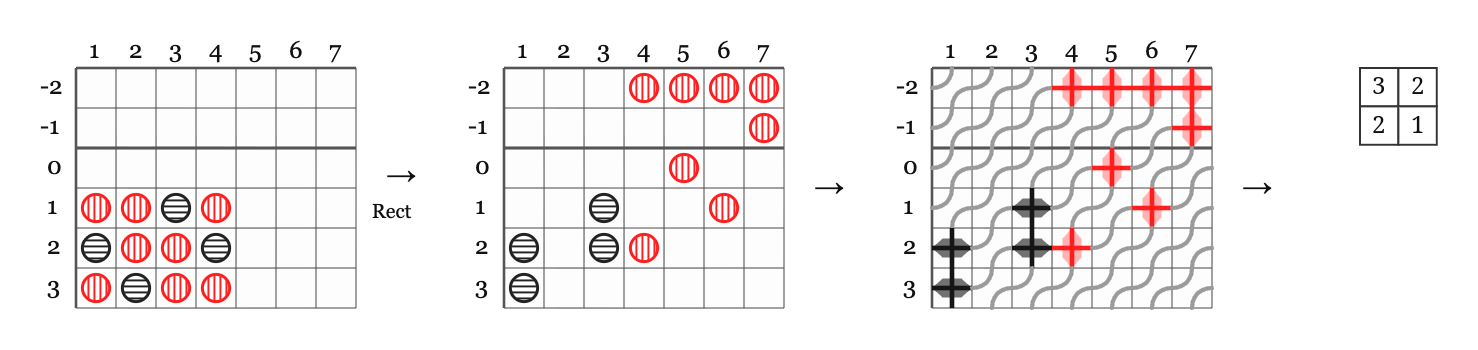}
    \caption{Rectification and Tableaux of $R'_2$}
    \label{fig:R2}
\end{figure}

Therefore, $\overline{\rec(A_1)} = \ins(R_1),$ meaning $\overline{\rec(A)} = \ins(R)$.

By Theorem \ref{prop:base-case}, the base case is true. By Theorem \ref{thm:induction-step} the induction step is true. Hence, the conjecture is true and we have $\overline{rec(A)} = ins(A^\dagger) \ \forall A \in BM_{m \times n}.$

\section{Proof of main theorem}

First, we have a local semicommutativity relation.

\begin{lemma}[Column locality of $Y_j^+$]
\label{lem:Y-locality}
Let $P$ be a super pipe dream with no red checkers in column $j+1$, so that $Y_j^+(P)$ is defined. Then,

\begin{enumerate}
    \item $P$ and $Y_j^+(P)$ agree in every column other than $j$ and $j+1$.

    \item The operation $Y_j^+(P)$ is determined entirely by and affects only columns $j$ and $j+1$.
\end{enumerate}

\end{lemma}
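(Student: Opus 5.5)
The plan is to unwind the definition of $Y_j^+$ as a finite sequence of elementary ladder moves and check the claim one move at a time. Recall that $Y_j^+$ repeatedly selects the lowest remaining red checker in column $j$ and applies either a $1$-ladder move (when a black checker sits immediately to its right) or a larger ladder move. I will show that every such move changes only tiles in columns $j$ and $j+1$, and that which move is applied, and where, depends only on the contents of those two columns. The lemma then follows by induction on the number of moves: the invariant ``agrees with $P$ outside columns $j,j+1$'' is preserved by each move, and each move depends only on the current contents of columns $j$ and $j+1$. By the induction hypothesis these contents are themselves a function of the original columns $j$ and $j+1$ of $P$.

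\textbf{Step 1: the $1$-ladder move.} In the $1$-ladder move (Figure \ref{fig:Y^+ e.g. 1}), the red checker at $(r,j)$ and the black checker at $(r,j+1)$ are swapped horizontally. Only the two tiles $(r,j)$ and $(r,j+1)$ change, and the condition that triggers the move is read off from those two tiles.

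\textbf{Step 2: the big ladder move.} Here the red checker at $(r,j)$ climbs to the northeast corner of the first available ladder, and the intermediate checkers shift horizontally (Figure \ref{fig:Y^+ e.g. 2}). A ladder in column $j$ is a vertical run of rows in which both columns $j$ and $j+1$ are occupied, capped by a row where column $j+1$ is empty. So locating the ladder only requires scanning column $j$ and column $j+1$ upward from row $r$. The horizontal shifts are between columns $j$ and $j+1$, because black checkers move horizontally but never change rows and the ladder has width two. Hence every changed tile lies in columns $j$ and $j+1$.

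\textbf{Step 3: the selection rule and termination.} ``The lowest red checker in column $j$'' depends only on column $j$. The hypothesis that column $j+1$ has no red checkers is a condition on column $j+1$ alone. The process stops when column $j$ contains no red checkers, which is again a column-$j$ condition.

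The main obstacle is Step 2. The paper describes the big ladder move only pictorially, so I must commit to the precise definition from \cite[Section~4]{dennin2025cauchy}, namely that a ladder is confined to two adjacent columns. In particular, I need to confirm that the ``first available ladder'' is found by scanning only columns $j$ and $j+1$, and that no intermediate checker is pushed into column $j+2$ or $j-1$. I would first verify this from Dennin's definition of ladder moves. Failing an explicit statement there, I would argue it from the fact that ladder moves preserve the underlying permutation and the row counts of black checkers, together with the two-column shape of the ladder pictured in Figure \ref{fig:Y^+ e.g. 2}.
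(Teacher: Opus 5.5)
Your proposal is correct and follows essentially the same route as the paper. Both arguments rest on the fact that each elementary ladder move performed by $Y_j^+$ is confined to a $k\times 2$ rectangle in columns $j$ and $j+1$, and that the move is selected by inspecting only those two columns: the lowest red checker, $1$-ladder versus big ladder, and the top of the ladder. Your induction over the sequence of moves makes explicit a point the paper leaves implicit, namely that later moves depend on the current contents of columns $j,j+1$, which in turn depend only on the original contents of those columns. The exact capping condition for the big ladder, which you rightly flag as needing Dennin's definition, does not affect the argument, since any version of it reads only columns $j$ and $j+1$.
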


\begin{proof}
By definition, $Y_j^+$ moves the red checkers in column $j$ into column $j+1$. For each red checker chosen during the construction, the associated ladder is a $k\times 2$ rectangle whose two columns are exactly $j$ and $j+1$. Thus every checker moved during the operation remains within these two columns, and every column $q\notin\{j,j+1\}$ is unchanged.

Additionally, the choice and size of each ladder are determined only by the locations of the red and black checkers in columns $j$ and $j+1$. In the $1$-ladder case, one checks whether the position immediately to the right of the chosen red checker is occupied, and, in the big-ladder case, finds the first unoccupied position above it in column $j$. 
\end{proof}

\begin{proposition}[Commutativity of $Y_{j-1}^+$ and $Y_{j+1}^+$]
\label{Commutative1}
Suppose that \(\mathbf P_j\) is a super pipe dream with no red checkers
in columns \(j+1\) and \(j+2\). Then
\[
    Y_j^+Y_{j+1}^+Y_{j-1}^+Y_j^+\mathbf P_j
    =
    Y_j^+Y_{j-1}^+Y_{j+1}^+Y_j^+\mathbf P_j .
\]
Intuitively, both sides of the equation should shift red checkers in columns $j-1$ and $j$ to columns $j+1$ and $j+2$, respectively.
\end{proposition}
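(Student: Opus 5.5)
The idea is to show that after the common first step $Y_j^+$, the operators $Y_{j+1}^+$ and $Y_{j-1}^+$ commute, because they act on disjoint pairs of columns. The claimed identity then follows by applying $Y_j^+$ to both sides. So the plan is to reduce to
\[
    Y_{j+1}^+Y_{j-1}^+\mathbf Q=Y_{j-1}^+Y_{j+1}^+\mathbf Q,\qquad \mathbf Q:=Y_j^+\mathbf P_j ,
\]
checking along the way that every operator in both composites is defined.

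First I verify definedness. Since $\mathbf P_j$ has no red checkers in column $j+1$, the operator $Y_j^+$ is defined. After it is applied, $\mathbf Q$ has no red checkers in column $j$, and its red checkers from column $j$ now sit in column $j+1$. By Lemma~\ref{lem:Y-locality}, column $j+2$ is unchanged, so $\mathbf Q$ still has no red checkers in column $j+2$. Hence both $Y_{j-1}^+$ (which needs column $j$ red-free) and $Y_{j+1}^+$ (which needs column $j+2$ red-free) are defined on $\mathbf Q$.

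Next I check that each remains defined after the other is applied. By Lemma~\ref{lem:Y-locality}, $Y_{j-1}^+$ changes only columns $j-1$ and $j$, so column $j+2$ stays red-free. Symmetrically, $Y_{j+1}^+$ changes only columns $j+1$ and $j+2$, so column $j$ stays red-free.

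For the commutation I use both parts of Lemma~\ref{lem:Y-locality}. The action of $Y_{j-1}^+$ is determined by, and affects only, columns $\{j-1,j\}$. The action of $Y_{j+1}^+$ is determined by, and affects only, columns $\{j+1,j+2\}$. These column sets are disjoint. Applying $Y_{j+1}^+$ first leaves columns $j-1,j$ identical to those of $\mathbf Q$, so $Y_{j-1}^+$ then acts on them exactly as it would on $\mathbf Q$, and it leaves columns $j+1,j+2$ alone. Comparing column by column, both orders give the same result: columns $j-1,j$ as $Y_{j-1}^+$ produces from $\mathbf Q$, columns $j+1,j+2$ as $Y_{j+1}^+$ produces from $\mathbf Q$, and all other columns as in $\mathbf Q$. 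Applying $Y_j^+$ to both sides gives the proposition; it is defined there because in either composite $Y_{j+1}^+$ has emptied column $j+1$ of red checkers and $Y_{j-1}^+$ does not touch that column.

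The main obstacle is making the phrase ``determined entirely by columns $j$ and $j+1$'' rigorous. The ladder moves, especially the big-ladder case, must not consult anything outside the two columns, for example when the red checker moves to the northeast corner of the ladder. I would first restate the ladder rule explicitly as a function of the two-column configuration (a word in $\{\text{red},\text{black},\text{empty}\}^2$ per row) and observe that each step, choosing the lowest red checker in column $j$ and searching upward for the first available ladder, reads only those entries. If some convention involves the boundary rows, I would note that the row range is common to all columns and therefore does not break the locality.
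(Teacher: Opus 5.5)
Your proposal is correct and follows the paper's proof step for step. Both set $\mathbf Q=Y_j^+\mathbf P_j$, verify that every operator is defined, and use the column-locality lemma to commute $Y_{j-1}^+$ and $Y_{j+1}^+$ because they act on the disjoint column pairs $\{j-1,j\}$ and $\{j+1,j+2\}$, then reapply $Y_j^+$ to both sides. Your version is somewhat more careful than the paper's, which just calls the two operators ``disjoint and commutable'': you explicitly check that the final $Y_j^+$ is defined in both composites, and you compare the two results column by column.
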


\begin{proof}
    First, we check that all operators appearing in the statement are defined. By assumption, columns \(j+1\) and \(j+2\) contain no red checkers. Hence, \(Y_j^+\) is defined on \(\mathbf P_j\). After applying \(Y_j^+\), column \(j\) contains no red checkers, so \(Y_{j-1}^+\) is defined. Also, column \(j+2\) still contains no red checkers, so \(Y_{j+1}^+\) is defined. After \(Y_{j+1}^+\), \(Y_j^+\) is defined. Therefore, all operators in the statement is defined.
    
    Set
    \[
        \mathbf P_j' := Y_j^+\mathbf P_j .
    \]
    It suffices to show that
    \[
        Y_{j+1}^+Y_{j-1}^+\mathbf P_j'
        =
        Y_{j-1}^+Y_{j+1}^+\mathbf P_j',
    \]
    because then applying the same operator \(Y_j^+\) to both sides gives the desired equality.

    By lemma \ref{lem:Y-locality}, \(Y_{j-1}^+\) only acts on the $j-1\text{ and } j$ while \(Y_{j+1}^+\) only acts on columns $j+1 \text{ and } j+2$ making these two operations disjoint and commutable. Hence,
    \[
        Y_{j+1}^+Y_{j-1}^+\mathbf P_j'
        =
        Y_{j-1}^+Y_{j+1}^+\mathbf P_j'.
    \]
    Applying \(Y_j^+\) to both sides yields
    \[
        Y_j^+Y_{j+1}^+Y_{j-1}^+Y_j^+\mathbf P_j
        =
        Y_j^+Y_{j-1}^+Y_{j+1}^+Y_j^+\mathbf P_j .
    \]
    Therefore, the proposition follows.
\end{proof}

The relation in Proposition \ref{Commutative1} can be generalized as follows.

\begin{proposition}\label{gencom}
    Suppose that \(\mathbf P_{i,j}\) is a super pipe dream on which both \(Y_i^+\) and \(Y_j^+\) are defined. If
    \[
        |i-j|\geq 2,
    \]
    then
    \[
        Y_i^+Y_j^+\mathbf P_{i,j}
        =
        Y_j^+Y_i^+\mathbf P_{i,j}.
    \]
\end{proposition}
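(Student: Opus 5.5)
The plan is to reduce the statement to Lemma~\ref{lem:Y-locality}, in the same way the middle step of Proposition~\ref{Commutative1} was handled. Without loss of generality take $j\geq i+2$. Then the column pairs $\{i,i+1\}$ and $\{j,j+1\}$ are disjoint, since $i+1<j$.

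\emph{Step 1: both compositions are defined.} By hypothesis, $Y_i^+$ and $Y_j^+$ are defined on $\mathbf P_{i,j}$. So column $i+1$ and column $j+1$ contain no red checkers. By part~1 of Lemma~\ref{lem:Y-locality}, $Y_i^+\mathbf P_{i,j}$ agrees with $\mathbf P_{i,j}$ outside columns $i,i+1$. Since $j+1\notin\{i,i+1\}$, column $j+1$ of $Y_i^+\mathbf P_{i,j}$ still has no red checkers, so $Y_j^+$ is defined on it. Symmetrically, $Y_j^+$ does not touch column $i+1$, so $Y_i^+$ is defined on $Y_j^+\mathbf P_{i,j}$.

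\emph{Step 2: compare the two results column by column.} Columns outside $\{i,i+1,j,j+1\}$ are unchanged by both compositions, again by part~1 of the lemma. Now consider columns $i,i+1$. Part~2 says that the output of $Y_i^+$ on these columns is a function $F_i$ of the input restricted to columns $i,i+1$ alone. In $Y_iY_j\mathbf P_{i,j}$, columns $i,i+1$ equal $F_i(\mathbf P_{i,j}|_{i,i+1})$, because $Y_j^+$ left those columns unchanged. In $Y_jY_i\mathbf P_{i,j}$, columns $i,i+1$ are those of $Y_i^+\mathbf P_{i,j}$, which is the same $F_i(\mathbf P_{i,j}|_{i,i+1})$, because the subsequent $Y_j^+$ does not alter them. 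The same argument with $i$ and $j$ exchanged identifies columns $j,j+1$. Hence the two super pipe dreams coincide.

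\emph{Main obstacle.} The delicate point is making part~2 of Lemma~\ref{lem:Y-locality} precise enough to yield the function $F_i$. The output of $Y_i^+$ in columns $i,i+1$ must depend only on the checker configuration in those two columns, including which rows are occupied. In particular, the ladder search (finding the first available position above the chosen red checker) and the stopping rule must never consult other columns. I would make this explicit by rereading the ladder-move definition: each step picks the lowest red checker in column $i$ and looks only at positions in columns $i$ and $i+1$. An induction on the number of red checkers in column $i$ then shows that the whole sequence of ladder moves is determined by the restriction to these two columns.
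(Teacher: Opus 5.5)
Your proposal is correct and follows the same route as the paper. The paper simply invokes Lemma~\ref{lem:Y-locality} and observes that the column pairs $\{i,i+1\}$ and $\{j,j+1\}$ are disjoint when $|i-j|\geq 2$; your explicit check that both compositions are defined, and your column-by-column comparison via the restriction map $F_i$, just make precise what the paper compresses into the phrase ``disjoint and commutative.''
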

\begin{proof}
    By lemma \ref{lem:Y-locality}, the operator \(Y_i^+\) acts only on columns $i$ and $i+1$ while \(Y_j^+\) acts only on columns $j$ and $j+1$. Since \(|i-j|\geq 2\), these two pairs of columns are disjoint and commutative. Hence, 
    \[
        Y_i^+Y_j^+\mathbf P_{i,j}
        =
        Y_j^+Y_i^+\mathbf P_{i,j}.
    \]
\end{proof}

\begin{figure}[H]
    \centering
    \includegraphics[width=0.8\linewidth]{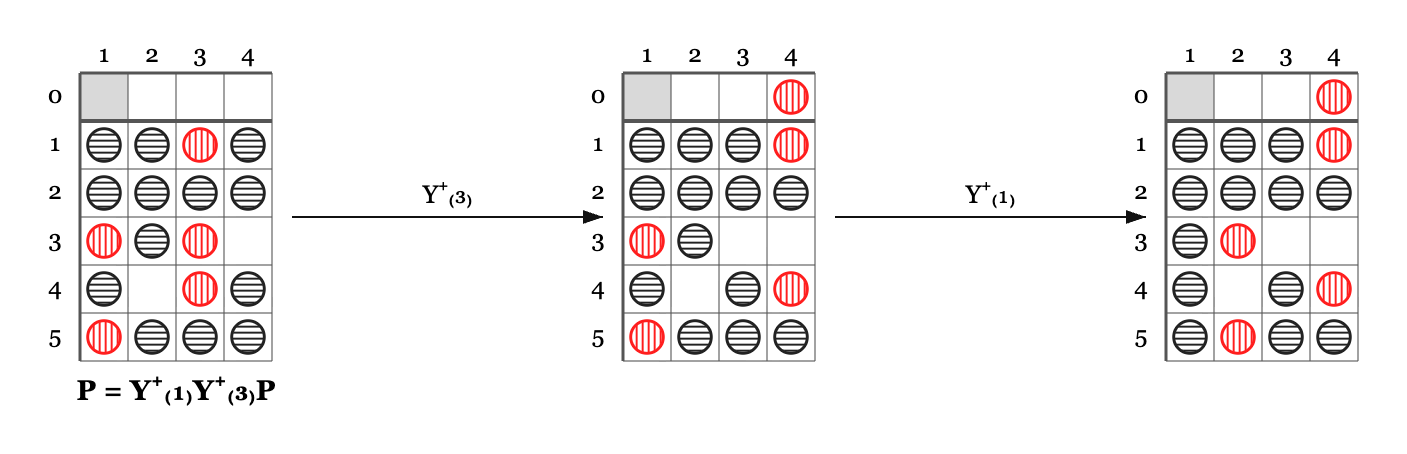}
    \vspace{0.5cm}
    \includegraphics[width=0.8\linewidth]{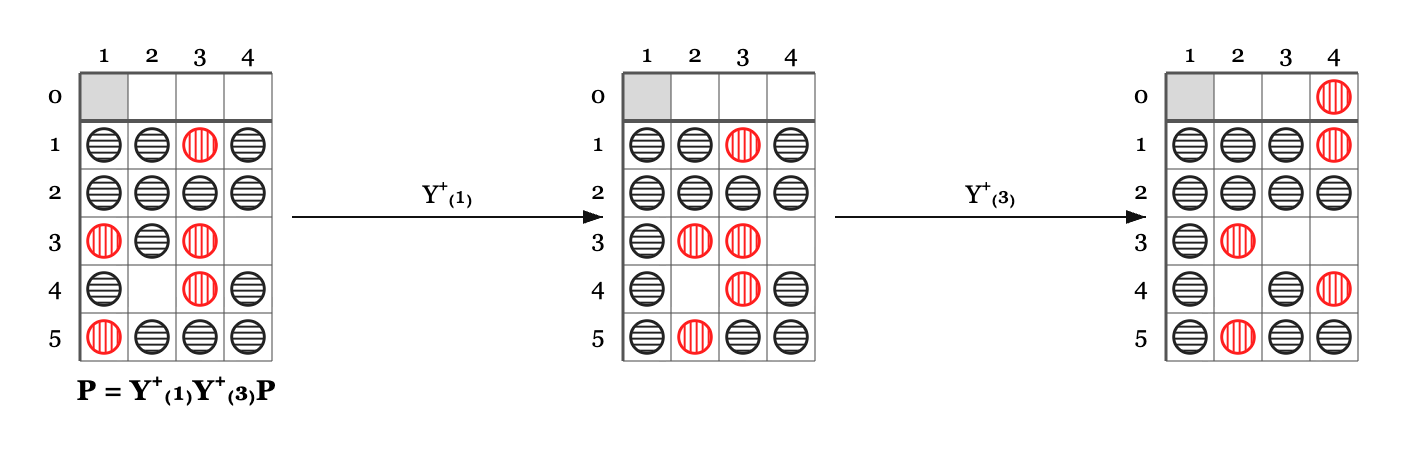}
    \caption{Illustration of a reference like Proposition \ref{Commutative1}}
    \label{fig:commute}
\end{figure}

\begin{theorem}[A semicommutativity relation for \(Y^+\)]
\label{Commutative2}
Let \(\mathbf P_{j,k}\) be a super pipe dream whose red checkers are contained in columns $ j,$ $j+1, $ $\dots, $ $k$ where \(j\leq k\). Then
\[
    Y_{\geq k}^+Y_{\geq k-1}^+\cdots
    Y_{\geq j}^+\mathbf P_{j,k}
    =
    Y_j^{+k}Y_{j+1}^{+(k+1)}\cdots
    Y_k^{+(2k-j)}\mathbf P_{j,k}.
\]
Intuitively, the left-hand side performs successive global flows beginning with \(Y_{\geq j}^+\) and ending with \(Y_{\geq k}^+\). The right-hand side instead moves the red checkers in each original column separately \(k-j+1\) times, processing the columns from \(k\) down to \(j\). In both descriptions, every red checker is moved \(k-j+1\) columns to the right and therefore ends strictly to the right of column \(k\).
\end{theorem}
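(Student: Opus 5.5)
Both sides are words in the generators $Y_i^+$, so the plan is to rewrite the left-hand word into the right-hand word using only the far commutation of Proposition \ref{gencom}. At each rewriting step we must also check that every operator is defined, meaning column $i+1$ has no red checkers at the moment $Y_i^+$ is applied.

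First, we truncate the infinite compositions. Since red checkers initially lie in columns $j,\dots,k$, after applying $Y_{\geq j}^+,\dots,Y_{\geq j+r-1}^+$ they lie in columns $j+r$ through $k+r$ at most. So $Y_{\geq j+r}^+$ acts as the finite word $Y_{j+r}^+Y_{j+r+1}^+\cdots Y_{k+r}^+$; the factors $Y_i^+$ with $i>k+r$ act as the identity, having no red checkers to move. Write $L_r:=Y_{j+r}^+Y_{j+r+1}^+\cdots Y_{k+r}^+$ for $r=0,\dots,k-j$. In the composition order $Y_aY_b$, which means $b$ is applied first, $L_r$ applies $Y^+_{k+r}$ first and $Y^+_{j+r}$ last. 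Hence the left-hand side equals $L_{k-j}\cdots L_1L_0\,\mathbf P_{j,k}$, a grid of generators $Y^+_{j+r+s}$ indexed by $0\le r,s\le k-j$. Similarly, $Y^{+(c+k-j)}_{c}=Y^+_{c+k-j}\cdots Y^+_{c}$ is the "column word" $C_c$. The right-hand side is therefore $C_j C_{j+1}\cdots C_k\,\mathbf P_{j,k}$, where $C_k$ is applied first and $C_c$ consists of the generators with $s=c-j$ fixed, listed in increasing $r$.

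So both sides are readings of the same $(k-j+1)\times(k-j+1)$ array of generators $g_{r,s}=Y^+_{j+r+s}$, once by rows and once by columns. In the LHS, $g_{r,s}$ is applied before $g_{r,s'}$ when $s>s'$, and row $r$ comes entirely before row $r+1$. In the RHS, $g_{r,s}$ is applied before $g_{r',s'}$ when $s>s'$ or when $s=s'$ and $r<r'$. Both readings are linear extensions of the partial order on the grid in which $g_{r,s}$ is applied before $g_{r+1,s}$ and before $g_{r,s-1}$. I will show that any two generators that are incomparable and adjacent in one reading have indices differing by at least $2$. Exchanging a comparable adjacent pair would be illegal, but pairs of the form $g_{r,s}$ and $g_{r+1,s-1}$ are equal to $Y^+_{j+r+s}$ and are never swapped in the bubble-sort. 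A pair $g_{r,s}$ and $g_{r',s'}$ with $r'>r$ and $s'>s$ is incomparable. One of them can be moved past the other by moving it past the intermediate elements one at a time. The standard fact that linear extensions are connected by adjacent transpositions of incomparable elements then lets us pass from the row reading to the column reading. For each swapped pair $g_{r,s}$, $g_{r',s'}$ with $r'>r$ and $s'>s$, the index difference is $(r'-r)+(s'-s)\ge 2$, so Proposition \ref{gencom} applies. This step is a concrete induction: move the generators of column $s=k-j$ to the front one at a time, then recurse on the remaining $(k-j+1)\times(k-j)$ array, which corresponds to inducting on $k-j$. Proposition \ref{Commutative1} is the $2\times2$ case of this.

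The main obstacle is definedness. Proposition \ref{gencom} requires both $Y_i^+$ and $Y_{i'}^+$ to be defined on the intermediate diagram, and in the middle of a rewriting this is not automatic. To handle it, I will maintain the invariant that, in any prefix of a linear extension, the red checkers in each original column $c$ have been moved exactly $n_c$ times, where $n_c$ is the number of executed generators in the grid column $s=c-j$. The partial order forces $n_j\le n_{j+1}\le\cdots\le n_k$, so red checkers occupy the columns $c+n_c$, which are strictly increasing in $c$. Therefore, when $g_{r,s}=Y^+_{j+r+s}$ is applied, the target column $j+r+s+1$ holds no red checkers. It would be the location of the checkers from original column $j+s+1$ only if $n_{j+s+1}=r$, and that is excluded because $n_{j+s+1}\ge r+1$ at that point. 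I will also note the well-definedness subtlety that $Y_i^+$ moves only the red checkers currently in column $i$, which by the invariant all come from a single original column. With definedness established at every stage, Lemma \ref{lem:Y-locality} and Proposition \ref{gencom} justify each swap, and this completes the equality.
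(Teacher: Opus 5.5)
Your proposal is correct and follows the same route as the paper: truncate each global flow to a finite word, then reorder the resulting grid of generators into the column-by-column word using only the far commutations of Proposition \ref{gencom}. Your linear-extension formalization, together with the invariant $n_j\le n_{j+1}\le\cdots\le n_k$ that guarantees every intermediate $Y^+_i$ is defined, supplies rigor the paper's proof leaves implicit, since the paper never checks definedness at the intermediate stages of the reordering.
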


\begin{proof}
Expand each operator \(Y_{\geq r}^+\) as
\[
    Y_{\geq r}^+
    =
    Y_r^+Y_{r+1}^+Y_{r+2}^+\cdots .
\]
Since \(\mathbf P_{j,k}\) has no red checkers outside columns \(j,\ldots,k\), only finitely many factors act nontrivially. Additionally, after $r$ global flows, every red checker has moved exactly $r$ columns to the right. Therefore, before the $s$-th global flow the rightmost possible red column is $k + s - 1$, so all factors beyond the corresponding upper index act trivially.\\
Therefore, the left-hand side can be written as
\begin{align*}
    &Y_{\geq k}^+Y_{\geq k-1}^+\cdots Y_{\geq j}^+\mathbf P_{j,k} \\
    &=
    \bigl(Y_k^+Y_{k+1}^+Y_{k+2}^+\cdots Y_{2k-j}^+\bigr)
    \bigl(Y_{k-1}^+Y_k^+Y_{k+1}^+\cdots Y_{2k-j-1}^+\bigr)
    \cdots
    \bigl(Y_j^+Y_{j+1}^+Y_{j+2}^+\cdots Y_k^+\bigr)
    \mathbf P_{j,k}.
\end{align*}

Now we use Proposition~\ref{gencom} to commute operators whose indices differ by at least \(2\). This allows us to move the first operator from each parenthesized block next to each other, then the second operator from each block next to each other, and so on. In other words, we reorder the above product as
\begin{align*}
    &\bigl(Y_k^+Y_{k-1}^+\cdots Y_j^+\bigr)
    \bigl(Y_{k+1}^+Y_k^+\cdots Y_{j+1}^+\bigr)
    \cdots
    \bigl(Y_{2k-j}^+Y_{2k-j-1}^+\cdots Y_k^+\bigr)
    \mathbf P_{j,k}.
\end{align*}

This reordering is valid because, whenever an operator is moved past another operator during this process, their indices differ by at least \(2\). Hence, the two operators commute by Proposition~\ref{gencom}.

By the definition
\[
    Y_i^{+m}
    =
    Y_m^+Y_{m-1}^+\cdots Y_i^+,
\]
we have
\[
    Y_k^+Y_{k-1}^+\cdots Y_j^+
    =
    Y_j^{+k},
\]
\[
    Y_{k+1}^+Y_k^+\cdots Y_{j+1}^+
    =
    Y_{j+1}^{+(k+1)},
\]
and continuing in the same way,
\[
    Y_{2k-j}^+Y_{2k-j-1}^+\cdots Y_k^+
    =
    Y_k^{+(2k-j)}.
\]
Therefore
\[
    Y_{\geq k}^+Y_{\geq k-1}^+\cdots Y_{\geq j}^+\mathbf P_{j,k}
    =
    Y_j^{+k}Y_{j+1}^{+(k+1)}\cdots
    Y_k^{+(2k-j)}\mathbf P_{j,k}.
\]
\end{proof}

\begin{figure}[H]
    \centering
    \includegraphics[scale = 0.18]{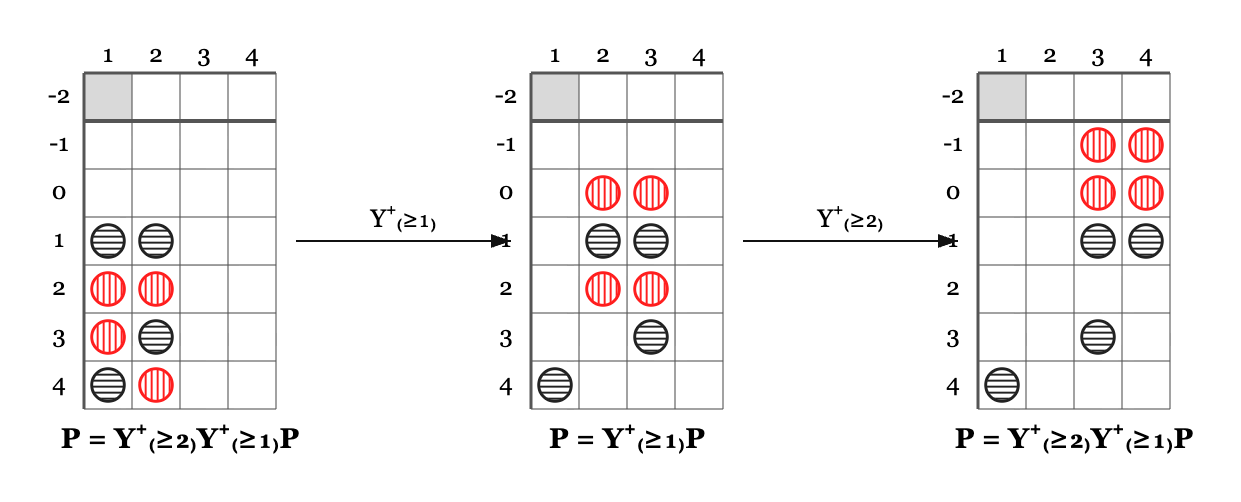}
    \caption{Illustration of $Y^+_{\geq 2}Y^+_{\geq 1}\mathbf P_{1,2}$}
    \label{fig:rightsiderect}
\end{figure}
\begin{figure}[H]
    \centering
    \includegraphics[scale = 0.18]{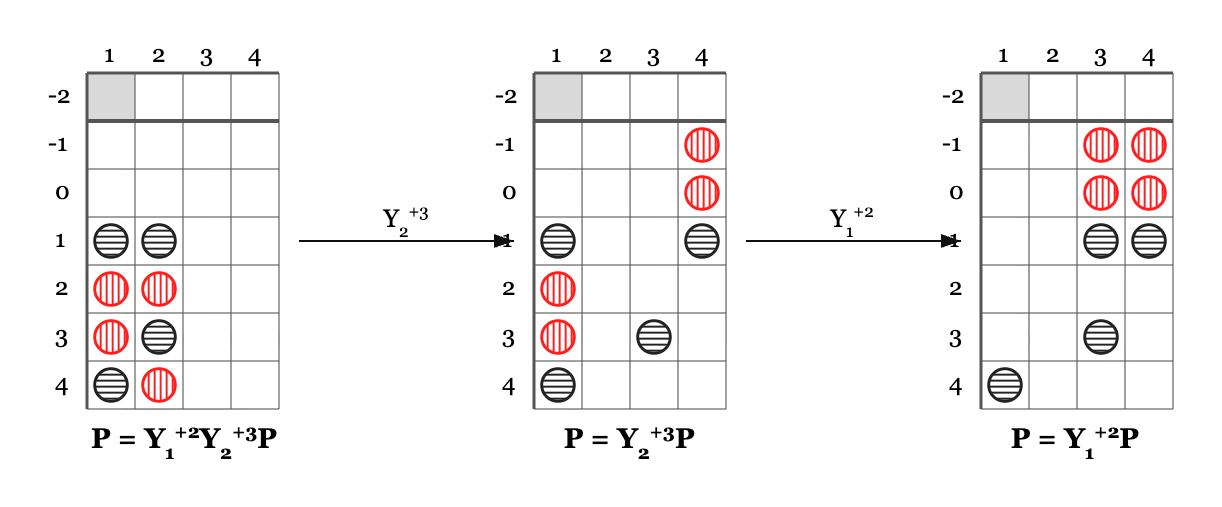}
    \caption{Illustration of $Y^{+2}_{1}Y^{+3}_{2}\mathbf P_{1,2}$}
    \label{fig:leftsiderect}
\end{figure}

\begin{example}
    Here we can see that both figure \ref{fig:rightsiderect} and figure \ref{fig:leftsiderect} uses $\mathbf P_{1,2}$. While figure \ref{fig:rightsiderect} uses $Y^+_{\geq 2}Y^+_{\geq 1}\mathbf P_{1,2}$ and figure \ref{fig:leftsiderect} uses $Y^{+2}_{1}Y^{+3}_{2}\mathbf P_{1,2}$, they both give the same rectified pipe dream at the end. 
\end{example}

Let
\[
\mathbf 0_{m\times n}
=
\begin{bmatrix}
0 & \cdots & 0\\
\vdots & \ddots & \vdots\\
0 & \cdots & 0
\end{bmatrix}.
\]
Let
\[
    A=
    \begin{bmatrix}
        C_1 & C_2 & \cdots & C_n
    \end{bmatrix}
    \in \mathrm{BM}_{m\times n},
\]
where \(C_1,\ldots,C_n\) are the columns of \(A\). For each \(i\in[n]\),
define the truncated suffix matrix
\[
    A_i'
    :=
    \begin{bmatrix}
        C_i & C_{i+1} & \cdots & C_n
    \end{bmatrix}
    \in \mathrm{BM}_{m\times(n-i+1)},
\]
and define the padded suffix matrix
\[
    A_i
    :=
    \begin{bmatrix}
        \mathbf 0_{m\times(i-1)} & C_i & \cdots & C_n
    \end{bmatrix}
    \in \mathrm{BM}_{m\times n}.
\]
Thus \(A_1=A\), while \(A_n\) remembers only the last column of \(A\).

\begin{theorem}
\label{any-order}
Let
\[
    \operatorname{Rect}(A_i)=(V_i,U_i)
    \qquad\text{and}\qquad
    \operatorname{Rect}(A_i')=(V_i',U_i').
\]
Let
\[
    M_i
    :=
    \begin{bmatrix}
        \mathbf 0_{m\times(i-1)} & V_i'
    \end{bmatrix},
\] where the first $i-1$ columns of $M_i$ are only red checkers and the columns $\geq i$ are the black checkered only rectified $V_i'$ shifted from starting at column $1$ to starting at column $i$.
Write
\[
    \operatorname{Rect}(M_i)=(V_i^*,U_i^*).
\]
Then
\[
    V_i^*=V_i .
\]
\end{theorem}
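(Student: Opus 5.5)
Rectification can be carried out in any order of flows, provided every red checker eventually ends up northeast of all black checkers. The plan is to split the rectification of $A_i$ into two phases: first rectify the suffix block (columns $i,\dots,n$) on its own, then flow the red checkers of the padding columns $1,\dots,i-1$. Since $A_i$ has red checkers in every position of columns $1,\dots,i-1$, and $M_i$ has the same red padding, the claim $V_i^*=V_i$ will follow once we show that after the first phase the configuration of $A_i$ agrees with $M_i$ up to red checkers that lie far to the northeast. Those red checkers do not affect the black component in later flows.

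\textbf{Step 1 (reordering).} Apply Theorem~\ref{Commutative2} to $A_i$, viewed as having red checkers in columns $1,\dots,n$. With $j=1$ and $k=n$ we can rewrite $Y_{\geq n}^+\cdots Y_{\geq 1}^+A_i$ as $Y_1^{+n}\cdots Y_n^{+(2n-1)}A_i$. In this form the red checkers of original columns $n,n-1,\dots,i$ are pushed out first, and those of columns $i-1,\dots,1$ afterwards. For rectification we may need further global flows $Y^+$; by Proposition~\ref{gencom} and Lemma~\ref{lem:Y-locality}, these extra flows can be split the same way into a part that touches only columns $\geq i$ and a part carrying the padding columns. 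So we write the full rectification as $\Phi_{<i}\circ\Phi_{\geq i}$, where $\Phi_{\geq i}$ is exactly the sequence of operators that rectifies $A_i'$ once columns are shifted by $i-1$.

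\textbf{Step 2 (suffix phase).} By Lemma~\ref{lem:Y-locality}, the operators in $\Phi_{\geq i}$ act only on columns $\geq i$, so the padding columns are untouched. On columns $\geq i$, $\Phi_{\geq i}$ acts exactly as the rectification of $A_i'$ does, after shifting. Hence $\Phi_{\geq i}A_i$ consists of red padding in columns $<i$, the black component $V_i'$ shifted to start at column $i$, and the red component of $\operatorname{Rect}(A_i')$ sitting northeast of all black checkers. This is $M_i$ together with these extra red checkers.

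\textbf{Step 3 (padding phase), the main obstacle.} We must show that $\Phi_{<i}$ produces the same black component whether or not the extra northeast red checkers are present. The plan is to argue from the ladder rules. A ladder move for a red checker in column $c$ examines only columns $c,c+1$ and only rows weakly above the chosen red checker, up to the first empty slot. Black checkers never change rows, and every black checker in these rows lies southwest of the displaced reds. So whenever a padding red checker reaches the region containing the displaced reds, no black checker remains to interact with. Making this precise is the delicate point. The approach I would try first is to re-run the reordering so that the extra reds are flowed far to the right beforehand: apply additional $Y^+_{\geq r}$ for large $r$, which commute with the padding operators by Proposition~\ref{gencom} because their column indices differ by at least $2$. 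Once the extra reds have been moved away, they never enter the columns the padding reds use while any black checker is still present there. After this, $\Phi_{<i}$ acts identically on both configurations, and reading off the black component gives $V_i^*=V_i$.
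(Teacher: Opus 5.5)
Your route is the paper's: reorder by Theorem~\ref{Commutative2} with $j=1$, $k=n$, and split the product into a suffix factor and a padding factor. You then identify the suffix factor with the rectification of $A_i'$ via Lemma~\ref{lem:Y-locality}, and compare the padding factor on $A_i$ and on $M_i$. The gap is Step~3, which you leave as a plan, and the mechanism you sketch does not work as stated.

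You treat $\Phi_{\ge i}$ as leaving the suffix reds just northeast of the blacks. With $n-i+1$ flows they would sit in columns $n+1,\dots,2n-i+1$, which are columns the padding reds still have to cross. You then propose to push them further right by extra flows, justified by Proposition~\ref{gencom}. But the padding factor contains $Y_c^+$ for every $c\le n+i-2$. So any flow that actually moves a red lying in a column $\le n+i-1$ has index within $1$ of some padding operator and cannot be commuted past it. The flows $Y^+_{\ge r}$ with $r\ge n+i$, which do commute, never touch those reds.

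No commutation is needed, because the factorization you already wrote down in Step~1,
\[
    Y_1^{+n}\cdots Y_n^{+(2n-1)}
    =
    \bigl(Y_1^{+n}\cdots Y_{i-1}^{+(n+i-2)}\bigr)
    \bigl(Y_i^{+(n+i-1)}\cdots Y_n^{+(2n-1)}\bigr),
\]
pushes each suffix column $c$ a full $n$ steps, to column $c+n\ge n+i$, before any padding operator acts. Since $n$ flows is also an admissible number of flows for rectifying $A_i'$, this suffix factor is your $\Phi_{\ge i}$.

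Afterwards the suffix reds occupy only columns $\ge n+i$, and the blacks form $V_i'$ shifted into columns $i,\dots,n$. So the intermediate configuration agrees with $M_i$ in every column $\le n+i-1$. The padding factor uses only $Y_c^+$ with $c\le n+i-2$, so by Lemma~\ref{lem:Y-locality} it reads and changes only columns $\le n+i-1$. Hence it acts identically on both configurations. Its output on $M_i$ is a rectification of $M_i$, by the same reordering argument run with $n$ flows. Comparing black components gives $V_i^*=V_i$.

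This index count is exactly how the paper closes the argument. The ladder-geometry reasoning in your Step~3 (rows weakly above the red, blacks lying southwest) is unnecessary.
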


\begin{proof}
We use the convention that products of operators act from right to left.
By Theorem~\ref{Commutative2}, the rectification of \(A_i\) may be
performed by moving the red checkers column-by-column:
\[
    Y_{\geq n}^+Y_{\geq n-1}^+\cdots Y_{\geq 1}^+ A_i
    =
    Y_1^{+n}Y_2^{+(n+1)}\cdots Y_n^{+(2n-1)}A_i.
\]
Thus the black component \(V_i\) of \(\operatorname{Rect}(A_i)\) is the
black component obtained after applying
\[
    Y_1^{+n}Y_2^{+(n+1)}\cdots Y_n^{+(2n-1)}
\]
to \(A_i\).

Now split this product into two parts:
\[
    Y_1^{+n}Y_2^{+(n+1)}\cdots Y_n^{+(2n-1)}
    =
    \bigl(Y_1^{+n}Y_2^{+(n+1)}\cdots Y_{i-1}^{+(n+i-2)}\bigr)
    \bigl(Y_i^{+(n+i-1)}\cdots Y_n^{+(2n-1)}\bigr).
\]
The second factor
\[
    Y_i^{+(n+i-1)}\cdots Y_n^{+(2n-1)}
\]
rectifies the part of \(A_i\) coming from the suffix columns
\[
    C_i,C_{i+1},\ldots,C_n.
\]
Since the first \(i-1\) columns of \(A_i\) are zero columns, they contain no
black checkers. Therefore this suffix rectification produces the same black
component as rectifying \(A_i'\), namely \(V_i'\).

In other words, after applying
\[
    Y_i^{+(n+i-1)}\cdots Y_n^{+(2n-1)}
\]
to \(A_i\), the black component is \(V_i'\), embedded in the original
columns. The red checkers coming from the suffix have been moved to columns
strictly to the right of the region that will be affected by the remaining
operators
\[
    Y_1^{+n}Y_2^{+(n+1)}\cdots Y_{i-1}^{+(n+i-2)}.
\]
Hence these already-rectified suffix red checkers cannot affect the
subsequent motion of black checkers. Therefore, for the purpose of computing
the final black component, we may replace the intermediate diagram by
\[
    M_i=
    \begin{bmatrix}
        \mathbf 0_{m\times(i-1)} & V_i'
    \end{bmatrix}.
\]

It remains to rectify the red checkers in the first \(i-1\) columns. By
Theorem~\ref{Commutative2}, this is done by applying
\[
    Y_1^{+n}Y_2^{+(n+1)}\cdots Y_{i-1}^{+(n+i-2)}
\]
to \(M_i\). Thus the black component \(V_i^*\) of \(\operatorname{Rect}(M_i)\)
is the same as the black component obtained from
\[
    \bigl(Y_1^{+n}Y_2^{+(n+1)}\cdots Y_{i-1}^{+(n+i-2)}\bigr)
    \bigl(Y_i^{+(n+i-1)}\cdots Y_n^{+(2n-1)}\bigr)A_i.
\]
But this product is exactly
\[
    Y_1^{+n}Y_2^{+(n+1)}\cdots Y_n^{+(2n-1)}A_i,
\]
whose black component is \(V_i\). Therefore
\[
    V_i^*=V_i.
\]
\end{proof}

\begin{example}
    As an example, let $A_i = \begin{bmatrix}
        0 & 1 & 0 \\ 0 & 0 & 1 \\ 0 & 1 & 1
    \end{bmatrix}$ and $A_i' = \begin{bmatrix}
        1 & 0 \\ 0 & 1 \\ 1 & 1
    \end{bmatrix}$. Rectifying $A_i$ and $A_i'$, we get $V_i = \begin{bmatrix}
        1 & \cdot \\ 1 & \cdot \\ 1 & 1
    \end{bmatrix}$ and $V_i' = \begin{bmatrix}
        1 & \cdot \\ 1 & \cdot \\ 1 & 1
    \end{bmatrix}$. The exact steps can be seen in figure \ref{fig:eg}.
    \begin{figure}[H]
        \centering
        \includegraphics[width=0.75\linewidth]{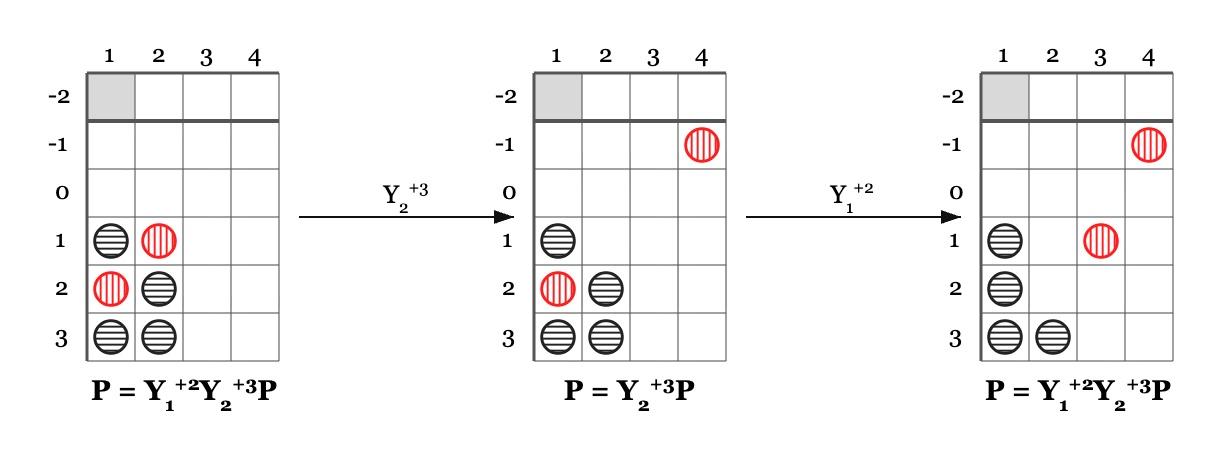}
        \caption{Rectification of $A_i'$}
        \label{fig:eg1}
    \end{figure}
    \begin{figure}[H]
        \centering
        \includegraphics[width=1\linewidth]{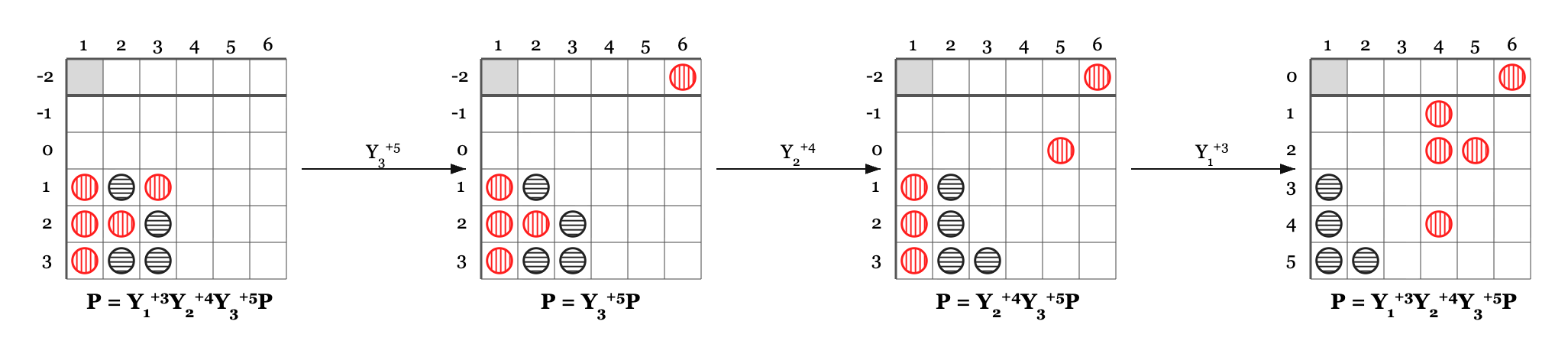}
        \caption{Rectification of $A_i$}
        \label{fig:eg}
    \end{figure}
    We can see that columns $1$ through $3$ in frame three of \ref{fig:eg} shows $M_i$ and rectifying $M_i$ performed the last $Y^{+3}_1$ move for $A_i$, therefore giving $V_i^* = V_i$. 
\end{example}

For a binary column $C_j$ of a matrix $A\in BM_{m\times n}$, define its associated column tableau by
\[
    \mathcal{C}_j
    :=
    \{r\in[m] : A_{rj}=1\},
\]
written in decreasing order according to the reverse semistandard convention.
In particular, if $C_j=0$, then $\mathcal{C}_j=\varnothing$.

\begin{theorem}
\label{insertion-eq}
For
\[
    A_i
    =
    \begin{bmatrix}
        0_{m\times(i-1)} & C_i & C_{i+1} & \cdots & C_n
    \end{bmatrix}
\]
and
\[
    A_i'
    =
    \begin{bmatrix}
        C_i & C_{i+1} & \cdots & C_n
    \end{bmatrix},
\]
we have
\[
    \ins(A_i)
    =
    \ins(A_i').
\]
\end{theorem}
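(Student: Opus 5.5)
To prove $\ins(A_i)=\ins(A_i')$, I would reduce the statement to a comparison of black components and then apply Theorem~\ref{any-order}. Since $\ins(\cdot)=\operatorname{tab}(V)$ depends only on the black component $V$ of the rectification, it suffices to show that the black component $V_i$ of $\operatorname{Rect}(A_i)$ and the black component $V_i'$ of $\operatorname{Rect}(A_i')$ are the same pipe dream, up to the trivial shift of columns. By Theorem~\ref{any-order}, $V_i=V_i^*$, where $V_i^*$ is the black component of $\operatorname{Rect}(M_i)$ and $M_i=[\,\mathbf 0_{m\times(i-1)}\ V_i'\,]$. So the task becomes comparing $\operatorname{Rect}(M_i)$ with $V_i'$.

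Step one is to understand the shape of $V_i'$. Being the black part of a rectified super pipe dream, it is a reduced pipe dream for a Grassmannian permutation. Its checkers in each row are left-justified into an initial segment. Moreover, $\operatorname{tab}(V_i')$ records the rows of the crossings along each pipe, so it is insensitive to a uniform horizontal translation of the black checkers. This holds as long as the checkers keep their rows and their relative configuration, which ladder moves do, since black checkers only move horizontally within their rows.

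Step two is to analyze how the flows $Y_1^{+n}\cdots Y_{i-1}^{+(n+i-2)}$ act on $M_i$. Here the red checkers fill the whole $m\times(i-1)$ block to the left of a left-justified black configuration. I would argue by induction on the processed column, using Lemma~\ref{lem:Y-locality}, that each $Y_j^+$ passing a full red column through a column of black checkers performs only $1$-ladder moves. Each such move swaps a red checker with the black checker immediately to its right in the same row. The net effect of flowing a full red column completely through the black region is then to translate every black checker one column to the left, without changing its row or the relative pattern. After all $i-1$ red columns are flowed, the black component is exactly $V_i'$ placed starting at column $1$. Hence $V_i^*=V_i'$, and therefore $\ins(A_i)=\operatorname{tab}(V_i)=\operatorname{tab}(V_i')=\ins(A_i')$.

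The main obstacle is step two: justifying that only $1$-ladder moves occur. A big ladder needs an empty position above the chosen red checker in column $j$ and an unoccupied spot to its right. When red column $j$ is full and column $j+1$ contains black checkers of a left-justified pattern mixed with empty cells, big ladders can in fact occur, namely at rows where black is absent. My first attempt would be to use the processing order: lowest red checker first, with a full red column. I would show that in rows with no black checker at $j+1$, the red checker still ends up moving straight right, because the cell above is occupied by red, which forces the ladder height to be $1$ in effect. Failing this, I would fall back on an invariance argument. Ladder moves preserve the underlying permutation, and black checkers stay in their rows, so row counts are preserved. A reduced Grassmannian black pipe dream that is left-justified and lies southwest of all red checkers should be determined by its row counts. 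That would give $V_i^*=V_i'$ directly.
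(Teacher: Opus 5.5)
Your reduction is fine as far as it goes: by Theorem~\ref{any-order} it suffices to show $V_i^*=V_i'$. But that equality is the corollary the paper \emph{deduces from} this theorem, and your step two, which carries all the weight, has a genuine gap.

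\textbf{The premise of step two is false.} Reduced pipe dreams of a Grassmannian permutation are not row-wise left-justified. With $m=2$ and $A_i'=\begin{bmatrix}1\\0\end{bmatrix}$, $\ins(A_i')$ is a single box labeled $1$. So $V_i'$ is the pipe dream of $v=132$ whose cross lies in row $1$, i.e.\ at $(1,2)$. The other pipe dream of $132$, at $(2,1)$, comes from $\begin{bmatrix}0\\1\end{bmatrix}$.

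\textbf{Your patch runs the ladder backwards.} By definition a big ladder extends up to the first \emph{unoccupied} cell of column $j$. Red checkers above therefore make the ladder taller, not of height one. In $M_i$ column $i-1$ is entirely red, so once the lowest red checker has an empty right neighbour it performs a genuine big ladder. It lands in a higher row of column $j+1$ and shifts the intermediate checkers. Take the $m=2$ example above: $M_2$ has reds at $(1,1),(2,1)$ and a black checker at $(1,3)$. The red at $(2,1)$ immediately does such a move. A move ``straight right'' into an empty cell is not a ladder move at all: it changes the label $r+j-1$ of that crossing and hence, in general, the underlying permutation. Whether the net effect on the black checkers is still a one-column translation is exactly what would need proof, and none is given.

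\textbf{The fallback also fails.} $\operatorname{tab}$ is a bijection onto reverse SSYT of shape $\lambda$, and row counts only record the content. For example, shape $(2,1)$ with entries $3,2,1$ already admits two tableaux, with first row $3\,2$ or $3\,1$. So distinct reduced pipe dreams of the same $v$ can have equal row counts. Moreover, ladder moves preserve the permutation of the whole super pipe dream, not of its black part. You therefore do not know a priori that $V_i^*$ and $V_i'$ are pipe dreams for the same Grassmannian permutation.

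\textbf{The missing idea is that no flow analysis is needed.} The paper uses the column-insertion description of $\ins$ from Dennin's Proposition~8.7:
$\ins(A)=\mathcal C_1*(\mathcal C_2*(\cdots*(\mathcal C_n*\varnothing)\cdots))$, where $\mathcal C_j$ is the set of rows carrying a $1$ in column $j$. The padded zero columns give $\mathcal C_1=\cdots=\mathcal C_{i-1}=\varnothing$, and $\varnothing*T=T$. Hence $\ins(A_i)=\ins(A_i')$ is immediate. The equality $V_i^*=V_i'$ is then a consequence of the theorem rather than a route to it.
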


\begin{proof}
    By the column-insertion description of $\operatorname{ins}$,
    the insertion tableau of a binary matrix with associated column tableaux
    $\mathcal{C}_1,\ldots,\mathcal{C}_n$ is
    \[
        \operatorname{ins}(A)
        =
        \mathcal{C}_1 *
        \bigl(
            \mathcal{C}_2 *
            \bigl(
                \cdots *
                (\mathcal{C}_n * \varnothing)
                \cdots
            \bigr)
        \bigr).
    \]
    This agrees with the pipe-dream definition of insertion by
    \cite[Proposition~8.7 and the discussion preceding Proposition~8.8]{dennin2025cauchy}.
    
    For $A_i$, the first $i-1$ binary columns are zero columns. Hence their associated column tableaux are empty:
    \[
        \mathcal{C}_1=\cdots=\mathcal{C}_{i-1}=\varnothing.
    \]
    Since inserting an empty column does nothing,
    \[
        \varnothing * T = T
    \]
    for every tableau $T$. Therefore
    \begin{align*}
        \operatorname{ins}(A_i)
        &=
        \varnothing *
        \Bigl(
            \cdots *
            \bigl(
                \varnothing *
                (\mathcal{C}_i *
                (\mathcal{C}_{i+1} *
                (\cdots *(\mathcal{C}_n*\varnothing)\cdots)))
            \bigr)
            \cdots
        \Bigr)\\
        &=
        \mathcal{C}_i *
        \bigl(
            \mathcal{C}_{i+1} *
            (\cdots *(\mathcal{C}_n*\varnothing)\cdots)
        \bigr).
    \end{align*}
    The final expression is exactly the column-insertion definition of
    $\operatorname{ins}(A_i')$. Hence
    \[
        \operatorname{ins}(A_i)
        =
        \operatorname{ins}(A_i').
    \]
\end{proof}

\begin{cor}
    Using the notation of Theorems \ref{any-order} and \ref{insertion-eq}, we have
    \[
        V_i^* = V_i'.
    \] 
    Intuitively, this corollary says that once the suffix has been rectified to the reduced Grassmannian pipe dream $V_i'$, adding the initial columns of red checkers and rectifying again does not alter the resulting black pipe dream. 
    
    In particular, the additional columns of red checkers may flow through the diagram, but they do not change the connectivity of the existing black pipes of $V_i'$. This conclusion is stronger than preservation of connectedness as after rectification the entire black component is again exactly $V_i'$.
\end{cor}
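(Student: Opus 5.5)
The plan is to combine Theorem~\ref{any-order} with Theorem~\ref{insertion-eq}, using the fact that $\operatorname{tab}$ determines a reduced Grassmannian pipe dream. This works at the level of tableaux, with no ladder moves to track.

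\textbf{Step 1 (black components of $A_i$ and $M_i$).} Theorem~\ref{any-order} gives $V_i^*=V_i$, so it suffices to prove $V_i=V_i'$. Here $V_i'$ is understood as embedded in the same ambient grid, aligned as in the definition of $\operatorname{Rect}$.

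\textbf{Step 2 (the two insertion tableaux agree).} By definition, $\operatorname{ins}(A_i)=\operatorname{tab}(V_i)$ and $\operatorname{ins}(A_i')=\operatorname{tab}(V_i')$. Theorem~\ref{insertion-eq} then gives $\operatorname{tab}(V_i)=\operatorname{tab}(V_i')$.

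\textbf{Step 3 (injectivity of $\operatorname{tab}$).} It remains to show that $\operatorname{tab}$ is injective on the reduced pipe dreams of Grassmannian permutations arising from rectification. Since black checkers never change rows, the row contents of the black component give the entries of the tableau. Each row of the tableau records the row indices of the crossings along one fixed horizontal pipe.

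For a Grassmannian permutation with descent at $r$, I would use the standard bijection between reduced pipe dreams and (reverse) SSYT, invoking \cite[Section~8.1]{dennin2025cauchy}. Under this bijection, the crossing of the $k$-th pipe at pipe-row $a$ sits at a determined position. Concretely, the $c$-th entry $a$ in row $k$ of the tableau places a checker at row $a$ and column $c-k+a$, or at whatever is the paper's normalization. So the tableau recovers every checker position, and $\operatorname{tab}$ is a bijection onto its image.

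Two points must match for this to apply. First, $V_i$ and $V_i'$ must be reduced pipe dreams for Grassmannian permutations with the same descent position; this holds because $m$ is the same for both. Second, they must be drawn in the same ambient coordinates, so the zero columns prepended in $A_i$ must not shift the placement convention. Granting both, $V_i=V_i'$, and hence $V_i^*=V_i=V_i'$.

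\textbf{Main obstacle.} The hard step is the normalization issue inside Step 3. $A_i$ has $n$ columns, while $A_i'$ has $n-i+1$. Rectification by $(Y^+)^N$ and the choice of $N$ could a priori place the black component at different horizontal offsets in the two cases.

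I would handle this by noting that the black component after rectification is a left-justified (Grassmannian) shape, determined entirely by $\operatorname{tab}$. Black checkers end up packed into the northwest region, independent of how many red columns were flowed past them. If that is not directly available, I would fall back on the ladder-move description from Lemma~\ref{lem:Y-locality}. The argument would be an induction on the number of prepended red columns, showing that flowing a full red column through an already-rectified black component $V$ (a reduced Grassmannian pipe dream with no red checkers to its right) returns the same $V$. Each ladder move then preserves the black checkers' positions up to the global shift absorbed by the convention.
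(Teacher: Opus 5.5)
Your proposal is correct and follows the paper's own argument. The paper writes $\operatorname{tab}(V_i^*)=\operatorname{tab}(V_i)=\operatorname{tab}(V_i')$ using Theorems~\ref{any-order} and~\ref{insertion-eq}, and concludes $V_i^*=V_i'$ by tacitly relying on exactly the injectivity of $\operatorname{tab}$ on reduced Grassmannian pipe dreams with descent at $m$ that you make explicit in Step~3. Your normalization discussion needs only that same fact, namely that the tableau together with $m$ fixes every crossing's position. Two side claims there are off, though neither is used: the black component is not left-justified in general (for $s_2$, the crossing in row $1$ sits at $(1,2)$), and your explicit column formula $c-k+a$ does not match the paper's reverse convention.
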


\begin{proof}
    This directly follows as $\operatorname{tab}(V_i^*) = \operatorname{tab}(V_i) = \operatorname{tab}(V_i')$. Therefore, $V_i^* = V_i'$.
\end{proof}

Recall that $T_i$ is defined such that $\operatorname{shape}(T_i) = \operatorname{shape}\bigl(\operatorname{ins}(A_i')\bigr)^t$ with the labeling conditions $\operatorname{number}(T_n)=n$ and $\operatorname{number}(T_{i-1}\setminus T_i)=i-1 \text{ for }2\leq i\leq n$. 

For a tableau $T$ with entries in $\{i,i+1,\ldots,n\}$, let $T_{\ge r}$ denote the subtableau consisting of the boxes whose entries are in
$\{r,r+1,\ldots,n\}$.

\begin{lemma}[Uniqueness from the growth chain]
\label{lem:growth-chain-uniqueness}
Let
\[
    \lambda_n \subseteq \lambda_{n-1} \subseteq \cdots \subseteq \lambda_i
\]
be a nested sequence of Young diagrams.

There is at most one tableau $T$ with entries in $\{i,i+1,\ldots,n\}$ satisfying
\[
    \operatorname{shape}(T_{\ge r})=\lambda_r
    \qquad
    \text{for every }r=i,\ldots,n.
\]
Equivalently, if such a tableau exists, it is uniquely determined by the growth chain.
\end{lemma}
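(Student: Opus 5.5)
The plan is to show directly that the chain of shapes pins down the entry of every box. Suppose $T$ and $T'$ both have entries in $\{i,\ldots,n\}$ and satisfy $\operatorname{shape}(T_{\ge r})=\lambda_r=\operatorname{shape}(T'_{\ge r})$ for all $r=i,\ldots,n$. The goal is then to show $T=T'$ box by box.

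The first step is to describe the entry of a box $b$ in terms of the chain alone. Since $T$ has entries in $\{i,\ldots,n\}$, we have $T_{\ge i}=T$. Hence $\operatorname{shape}(T)=\lambda_i$, and $T$ and $T'$ have the same set of boxes. For a box $b\in\lambda_i$, its entry is $r$ exactly when $b\in T_{\ge r}$ and $b\notin T_{\ge r+1}$. For $r=n$ the second condition is vacuous, using the convention $\lambda_{n+1}=\varnothing$.

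This requires $T_{\ge r}$ to be identified with its set of boxes, so that $\operatorname{shape}(T_{\ge r})$ means exactly that set. This is the one point to make explicit: the subtableau $T_{\ge r}$ must occupy a Young diagram in the same position inside $\lambda_i$. Under the paper's conventions this holds, because the labelled tableaux $T_n\subseteq T_{n-1}\subseteq\cdots$ are nested with larger labels placed first. I would state this as the standing interpretation: $\operatorname{shape}(T_{\ge r})$ denotes the set of boxes of $T$ carrying entries $\ge r$, and equality with $\lambda_r$ is equality of these sets as subsets of the plane.

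With that interpretation the entry of $b$ is
\[
  T(b)=\max\{r\in\{i,\ldots,n\} : b\in\lambda_r\},
\]
because $b\in\lambda_r$ if and only if $T(b)\ge r$. The right-hand side depends only on the chain $(\lambda_r)$. Hence $T(b)=T'(b)$ for every box, and so $T=T'$. Equivalently, the skew shape $\lambda_r\setminus\lambda_{r+1}$ is exactly the set of boxes labelled $r$, and these skew shapes partition $\lambda_i$.

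The main obstacle is not the combinatorics but the interpretive step above. One has to argue that $\operatorname{shape}(T_{\ge r})$ is the literal box set rather than merely a partition up to translation. If only the partition were meant, I would recover the box set from the tableau conventions: in a reverse semistandard tableau, the entries $\ge r$ form an order ideal anchored at the top-left corner. This is the same shape-theoretic fact used implicitly in defining the nested $T_n\subseteq\cdots\subseteq T_1$.

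The lemma then directly gives the well-definedness of $\operatorname{rec}(A)=T_1$, since by Theorem \ref{any-order} the shapes $\operatorname{shape}(\operatorname{ins}(A_r'))^t$ form a nested chain.
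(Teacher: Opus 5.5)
Your proof is correct and is essentially the paper's argument. Both identify the boxes labelled $r$ as exactly $\lambda_r\setminus\lambda_{r+1}$ (with $\lambda_{n+1}=\varnothing$), and your formula $T(b)=\max\{r : b\in\lambda_r\}$ restates this; your explicit point that $\operatorname{shape}(T_{\ge r})$ must be read as the literal set of boxes (automatic for reverse semistandard fillings, since entries $\ge r$ form a top-left order ideal) makes precise an assumption the paper uses silently. Only your closing aside is off: Theorem~\ref{any-order} concerns $V_i^*=V_i$, not the nestedness of the shapes $\operatorname{shape}(\operatorname{ins}(A_r'))^t$, but this does not affect the lemma.
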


\begin{proof}
Set
\[
    \lambda_{n+1}:=\varnothing.
\]
For every $r\in\{i,\ldots,n\}$, a box has entry exactly $r$ if and only if it belongs to
\[
    \lambda_r\setminus\lambda_{r+1}.
\]
Note that if a box is in $\lambda_r$, its entry is at least $r$,
and if the box is not in $\lambda_{r+1}$, its entry is not at least $r+1$.

Therefore, every box of $\lambda_i$ has the property that a box has entry $r$ if and only if it is in $\lambda_r \setminus \lambda_{r+1}$. Hence, two tableaux satisfying the same growth conditions must agree in every box.
\end{proof}

\begin{theorem}\label{partial-tab-equal-rec}
    Using the definition of the recording tableau, 
    \[
        \rec(A_i)=T_i .
    \]
\end{theorem}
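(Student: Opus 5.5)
We interpret $\rec(A_i)$ by applying the definition of the recording tableau to the padded matrix $A_i\in\BM_{m\times n}$. That definition builds a nested chain $T^{(i)}_n\subseteq\cdots\subseteq T^{(i)}_1=\rec(A_i)$ from the suffixes $(A_i)'_r$ for $r\in[n]$. The plan is to identify these suffixes and their insertion tableaux, compare the resulting growth chain with the one defining $T_i$, and conclude with Lemma~\ref{lem:growth-chain-uniqueness}.

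\textbf{Step 1 (suffixes with $r\ge i$).} For $r\ge i$ the suffix $(A_i)'_r$ consists of the columns $C_r,\ldots,C_n$, so it equals $A'_r$. The corresponding shapes therefore agree: $\operatorname{shape}(T^{(i)}_r)=\operatorname{shape}(\ins(A'_r))^t=\operatorname{shape}(T_r)$ for all $r\ge i$.

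\textbf{Step 2 (suffixes with $r<i$).} For $r<i$ the suffix $(A_i)'_r$ is $\bigl[\mathbf 0_{m\times(i-r)}\ C_i\ \cdots\ C_n\bigr]$. This is exactly the padded form of $A'_i$ inside $\BM_{m\times(n-r+1)}$. The argument of Theorem~\ref{insertion-eq} applies verbatim, since empty column tableaux insert trivially, and gives $\ins((A_i)'_r)=\ins(A'_i)$. One could alternatively use the black component via Theorem~\ref{any-order} and its corollary. Hence $\lambda_r=\lambda_i$ for all $r<i$, so the skew shapes $T^{(i)}_{r}\setminus T^{(i)}_{r+1}$ are empty for $r<i$. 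It follows that $\rec(A_i)=T^{(i)}_1=T^{(i)}_i$ has all entries in $\{i,\ldots,n\}$.

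\textbf{Step 3 (uniqueness).} Both $\rec(A_i)$ and $T_i$ have entries in $\{i,\ldots,n\}$. For every $r=i,\ldots,n$, the subtableau of entries $\ge r$ has shape $\operatorname{shape}(\ins(A'_r))^t$. This holds by the labeling conditions $\operatorname{number}(T_n)=n$ and $\operatorname{number}(T_{r-1}\setminus T_r)=r-1$, which force the box-wise content of each skew layer. Lemma~\ref{lem:growth-chain-uniqueness} then gives $\rec(A_i)=T_i$.

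The main obstacle is making sure the chain $\operatorname{shape}(\ins(A'_r))^t$ is genuinely nested, so that the skew layers make sense. One also needs the subtableau of entries $\ge r$ to be exactly $T_r$, rather than merely of the same shape. Both follow from the nesting built into the definition of $T_n\subseteq\cdots\subseteq T_1$, which we take as given since $\rec$ is assumed well defined. The remaining care is only in Step 2, checking that zero columns on the left contribute nothing; this is the content of Theorem~\ref{insertion-eq}.
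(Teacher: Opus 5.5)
Your proposal is correct and follows essentially the same route as the paper: you match the growth-chain shapes of $\rec(A_i)$ with $\operatorname{shape}(\ins(A'_r))^{t}$ for $r\ge i$, observe that the zero columns contribute no labels below $i$, and conclude with Lemma~\ref{lem:growth-chain-uniqueness}. Your Step 2 applies Theorem~\ref{insertion-eq} to the suffixes $(A_i)'_r$ with $r<i$, and this actually justifies the paper's bare assertion that the zero columns of $A_i$ create no boxes; the paper instead uses Theorem~\ref{insertion-eq} to get $\ins(A_r)=\ins(A'_r)$, which is not needed, since $(A_i)'_r=A'_r$ for $r\ge i$.
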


\begin{proof}
For each \(r\in\{i,i+1,\ldots,n\}\), let
\[
    I_r:=\operatorname{ins}(A_r)
    \qquad\text{and}\qquad
    I_r':=\operatorname{ins}(A_r').
\]
By Theorem~\ref{insertion-eq}, inserting the padded suffix matrix \(A_r\) gives the same insertion tableau as inserting the truncated suffix matrix \(A_r'\). Hence
\[
    I_r=I_r'
    \qquad\text{for every } r\geq i.
\]
In particular,
\[
    \operatorname{shape}(I_r)
    =
    \operatorname{shape}(I_r')
    =
    \operatorname{shape}\bigl(\operatorname{ins}(A_r')\bigr).
\]

Now consider the recording tableau \(\operatorname{rec}(A_i)\). By
definition, \(\operatorname{rec}(A_i)\) records the growth of the insertion tableau as the columns
\[
    C_n,C_{n-1},\ldots,C_i
\]
are inserted. Since the first \(i-1\) columns of \(A_i\) are zero columns, they create no boxes. Thus the only labels appearing in
\(\operatorname{rec}(A_i)\) are
\[
    i,i+1,\ldots,n.
\]

Let \(S_{\geq r}\) be the subtableau of
\(\operatorname{rec}(A_i)\) consisting of boxes with labels
\(r,r+1,\ldots,n\). Then, by the definition of the recording tableau,
\[
    \operatorname{shape}(S_{\geq r})
    =
    \operatorname{shape}(I_r)^{\mathsf t}
    =
    \operatorname{shape}\bigl(\operatorname{ins}(A_r')\bigr)^{\mathsf t}.
\]
Moreover, when we pass from \(A_r\) to \(A_{r-1}\), the only newly inserted column is \(C_{r-1}\). Therefore the new boxes in
\[
    S_{\geq r-1}\setminus S_{\geq r}
\]
are labeled \(r-1\).

Additionally, because \[
    \operatorname{shape}(S_{\geq r})
    =
    \operatorname{shape}\bigl(\operatorname{ins}(A_r')\bigr)^{\mathsf t}
    =
    \operatorname{shape}(T_r)
\] for every \(r\geq i\), the chain
\[
    S_{\geq n}\subseteq S_{\geq n-1}\subseteq\cdots\subseteq S_{\geq i}
\]
satisfies exactly the same shape and labeling conditions as the chain
\[
    T_n\subseteq T_{n-1}\subseteq\cdots\subseteq T_i
\]
used to define \(T_i\).

By Lemma~\ref{lem:growth-chain-uniqueness}, the tableau determined by this growth chain is unique. Therefore, $S_{\geq r}=T_r$ for every $r\ge i$, meaning
\[
    \operatorname{rec}(A_i)=T_i.
\]
\end{proof}

Let $R_i := A_i^\dagger$ or $R_i = \begin{bmatrix}
    \mathbf{1}_{(i-1) \times m} \\ C_i^\dagger \\ \vdots \\ C_n^\dagger
\end{bmatrix}$. 

Let $ E_{p\times q}
    :=
    \begin{bmatrix}
        \cdot & \cdots & \cdot\\
        \vdots & \ddots & \vdots\\
        \cdot & \cdots & \cdot
    \end{bmatrix}$
denote the \(p\times q\) empty array. Define $R_i'
    :=
    \begin{bmatrix}
        E_{(i-1)\times m}\\
        C_i^\dagger\\
        \vdots\\
        C_n^\dagger
    \end{bmatrix}.$

Thus \(R_i'\) is obtained from \(R_i\) by replacing its first
\(i-1\) rows of black checkers with empty rows. In particular,
\[
    R_1'=R_1=R.
\]

For a super pipe dream $P$ and an integer $i$, write
\[
    P_{\ge i}
\]
for the restriction of $P$ to rows $i,i+1,\ldots$.

\begin{theorem}[Lower-row invariance]
\label{thm:lower-row-invariance}
For every  \(2\leq i\leq n\), \[ 
\ins(R_i') \subseteq \ins(R_{i-1}') \]
and \[
\operatorname{number}(\ins(R_{i-1}') / \ins(R_i')) = i-1.
\]

\end{theorem}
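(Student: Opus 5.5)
We argue by comparing the rectifications of $R_i'$ and $R_{i-1}'$ directly. By construction $R_{i-1}'$ and $R_i'$ agree in all rows $\ge i$, and they differ only in row $i-1$. In $R_i'$ that row is empty, while in $R_{i-1}'$ it carries the row $C_{i-1}^\dagger$: red checkers where $C_{i-1}$ has a $1$, black checkers where it has a $0$. The plan is to show three things: the black checkers coming from rows $\ge i$ rectify to the same positions in both diagrams; the red checkers in row $i-1$ do not disturb them; and the only new black checkers in the rectified diagram lie in row $i-1$. Since $\operatorname{tab}$ records each crossing by its row index, the tableau entries equal to $i-1$ then come exactly from the new checkers. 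This yields both $\ins(R_i')\subseteq\ins(R_{i-1}')$ and the claim that every box of the difference is labelled $i-1$.

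\textbf{Step 1 (row preservation and order of flows).} Recall that ladder moves never change the row of a black checker. Using Theorem~\ref{Commutative2} and Proposition~\ref{gencom}, we reorganise $\operatorname{Rect}(R_{i-1}')$ so that all flows on red checkers originating in rows $\ge i$ are performed first. This is the row analogue of Theorem~\ref{any-order}, where the roles of suffix columns are played by the lower rows. The first stage then produces exactly the black component of $\operatorname{Rect}(R_i')$ in rows $\ge i$, together with the untouched row $i-1$ sitting above it. Here we use that the diagram is transpose-symmetric, $\mathbf P^\dagger=\overline{\mathbf P}^{\mathsf t}$, so the column-locality statements of Lemma~\ref{lem:Y-locality} can be applied to the transposed picture.

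\textbf{Step 2 (the top row does not disturb lower pipes).} Next we flow the red checkers of row $i-1$. Each ladder move involves a red checker and the black checkers in its ladder. The red checkers of row $i-1$ have no red checkers above them, since rows $<i-1$ are empty. So any ladder they start is a $1$-ladder or a ladder extending upward into the empty rows. We need such a ladder never to reach down into rows $\ge i$, or, if it does, to return the black checkers there to their original columns. This is the analogue of Theorem~\ref{insertion-eq} together with its corollary, that $V_i^*=V_i'$. If it holds, the black checkers in rows $\ge i$ end in the same cells as in $\operatorname{Rect}(R_i')$. Their pipes are then unchanged, so $\operatorname{tab}$ restricted to entries $\ge i$ is the same for both diagrams, which gives the inclusion.

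\textbf{Step 3 (labels of the new boxes).} By row preservation, every remaining black checker sits in row $i-1$. Under $\operatorname{tab}$, each crossing is recorded by its row index, so every box of $\ins(R_{i-1}')\setminus\ins(R_i')$ has entry $i-1$. The reverse semistandard shape condition holds automatically, because the result is a tableau of a reduced Grassmannian pipe dream. The main obstacle is Step~2. Ladders have height greater than one only when there is a gap above, and the flowing red checker moves northeast. My first attempt is therefore to show that a ladder started in row $i-1$ only shifts checkers within rows $\le i-1$, by inspecting the big-ladder rule. Failing that, I would instead use the column-insertion description cited in the proof of Theorem~\ref{insertion-eq}. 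In that description, inserting a new row at the top corresponds to Pieri-type growth by a vertical strip of entries $i-1$ placed on the existing tableau.
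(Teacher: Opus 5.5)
\textbf{There is a genuine gap: Step 1 does not exist, and Step 2 is left unproven.}

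Step 1 cannot be justified by the results you cite. Proposition~\ref{gencom} and Theorem~\ref{Commutative2} only permute the column operators $Y_j^+$. Each $Y_j^+$ flows \emph{every} red checker of column $j$ (lowest first), whatever row it started in. Nothing in the paper flows only the red checkers originating in rows $\ge i$. Worse, $Y_{j-1}^+$ is defined only when column $j$ is red-free. So a red checker of row $i-1$ sitting in column $j$ must be flowed before any red checker in column $j-1$ can move, and the postponement you want violates the domain condition of the operators. The identity $\mathbf P^\dagger=\overline{\mathbf P}^{\mathsf t}$ is just the definition of the adjoint, and it swaps colours. It does not give a ``row analogue'' of Theorem~\ref{any-order} for red checkers. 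Finally, the intermediate state you describe is not what happens in general. Ladders started by red checkers in rows $\ge i$ grow upward, and in $R_{i-1}'$ they can run into row $i-1$, which is empty in $R_i'$. So row $i-1$ is not left untouched, and the tops of these ladders differ between the two diagrams.

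Step 2, which you leave open, aims at the harmless direction. A ladder extends only upward from its moving red checker, since red checkers move right and weakly up. Hence a ladder started in row $i-1$ lies in rows $\le i-1$ automatically. The real issue is the reverse interaction: ladders from rows $\ge i$ whose upper end meets the extra checkers in row $i-1$. The paper's proof addresses exactly this by running the two rectifications in parallel and keeping the invariant that the diagrams agree in rows $\ge i$ after each elementary ladder move.
\begin{itemize}
\item Because $Y_j^+$ processes the lowest red checker first, the new row does not change the order in which red checkers in rows $\ge i$ are handled.
\item A move whose red checker lies above row $i$ never touches rows $\ge i$.
\item For a move whose red checker lies in row $r\ge i$, the part of the ladder in rows $i,\dots,r$ is the same in both diagrams. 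Only its top may extend farther in $R_{i-1}'$, so the shifts in rows $\ge i$ coincide.
\end{itemize}
This invariant, combined with row preservation of black checkers (your Step 3, which is fine), is the missing idea.

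Your fallback does not close the gap as stated either. The column-insertion description from Dennin is stated for binary matrices, whereas $R_i'$ and $R_{i-1}'$ contain empty rows. Also, the boxes labelled $i-1$ form a horizontal strip (at most one per column, since columns strictly decrease), not a vertical one.
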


\begin{proof}
The pipe dreams $R_i'$ and $R_{i-1}'$ agree in rows $i,i+1,\ldots,n$. Their only difference is that $R_{i-1}'$ has the additional row $C_{i-1}^{\dagger}$ in row $i-1$, while rows $1,\ldots,i-2$ are empty in both diagrams.

We compare their $Y^+$-rectification processes. Recall that red checkers move only to the right and weakly upward, while black checkers never change rows. Hence a move whose red checker begins above row $i$ cannot affect rows $i,\ldots,n$. Moreover, since $Y_j^+$ always acts on the lowest red checker in column $j$, adding checkers in row $i-1$ does not change the order in which red checkers lying in rows $i,\ldots,n$ are processed.

Now consider an elementary ladder move whose red checker lies in a row
$r\ge i$. Since the two diagrams agree in rows $i,\ldots,n$, there are
two possibilities.

Case 1: the ladder is not extended. The same ladder occurs in both diagrams. Therefore, exactly the same red and black checkers move in rows $i,\ldots,n$, so the two diagrams continue to agree in these rows.

Case 2: the ladder is extended. The only difference between the two diagrams lies strictly above row $i$. Consequently, the portion of the ladder lying in rows $i,\ldots,r$ is the same in both diagrams; only its upper end may extend farther in $R_{i-1}'$. Thus the ladder move performs exactly the same shifts on all checkers in rows $i,\ldots,n$, with any additional motion occurring strictly above row $i$.

It follows inductively over the elementary ladder moves that the rectified pipe dreams agree in rows $i,\ldots,n$. In particular, the black checkers in these rows undergo the same ladder moves and therefore have the same pipe connectivity in the two rectifications. Since $\operatorname{tab}$ records the row labels of these rectified black checkers, all entries labeled $i,\ldots,n$ occur in the same positions.

Hence,
\[
    \ins(R_i')
    \subseteq
    \ins(R_{i-1}').
\]
Finally, every additional black checker in $R_{i-1}'$ comes from the newly added row $i-1$, and black checkers never change rows during
rectification. Thus, every new tableau entry is labeled $i-1$. Therefore
\[
    \ins(R_{i-1}')
    \setminus
    \ins(R_i')
\]
consists entirely of entries labeled $i-1$.
\end{proof}

\begin{corollary}[Upper black rows do not change the lower insertion tableau]\label{cor:upper-black-row-invariance}
Let $\left.\operatorname{ins}(R_i)\right|_{\ge i}$ denote the subtableux of $R_i$ with entries strictly greater than $i$. 

\[
    \left.\operatorname{ins}(R_i)\right|_{\ge i}
    =
    \operatorname{ins}(R_i').
\]
\end{corollary}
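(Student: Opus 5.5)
The plan is to compare the two rectifications row by row, exactly as in the proof of Theorem~\ref{thm:lower-row-invariance}. The only difference between $R_i$ and $R_i'$ is that rows $1,\ldots,i-1$ of $R_i$ are filled with black checkers, while in $R_i'$ they are empty. Rows $i,\ldots,n$, which carry $C_i^\dagger,\ldots,C_n^\dagger$, are identical in the two diagrams. In particular, every red checker of either diagram starts in some row $r\ge i$.

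The first step is to check that the $Y^+$-flows process the same red checkers in the same order. Each $Y_j^+$ selects the lowest red checker in column $j$, and the red checkers of both diagrams lie in rows $\ge i$, where the diagrams agree. So the selections coincide, and we can proceed by induction over the elementary ladder moves, keeping as hypothesis that the diagrams agree in rows $\ge i$.

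The second step is the case split of Theorem~\ref{thm:lower-row-invariance}. If the ladder move stays within rows $\ge i$, it is literally the same move in both diagrams. If the ladder reaches row $i-1$, the black checkers in rows $1,\ldots,i-1$ of $R_i$ can make the ladder extend higher there than in $R_i'$. Even so, the portion of the ladder inside rows $i,\ldots,r$ is identical, and the extra motion happens strictly above row $i$. Since black checkers never change rows, the rectified diagrams still agree in rows $i,\ldots,n$ after the move. After all the moves, the black checkers in rows $\ge i$ sit in the same positions in both rectifications.

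The third step passes to $\operatorname{tab}$. The entries of $\operatorname{ins}$ labelled $\ge i$ are exactly the row indices of the black crossings in rows $\ge i$. I need their placement in the tableau to be the same in both cases. Since $\operatorname{tab}$ is a reverse semistandard tableau, larger labels sit weakly northwest, so the boxes labelled $\ge i$ form a subtableau. Its shape is read off from the pipes passing through the crossings in rows $\ge i$. The task is to show that this pipe structure is unaffected by the full rows above: a row of black checkers above row $i$ only adds crossings labelled $<i$, and in a reduced Grassmannian pipe dream the pipes through lower rows enter from the left edge below row $i$. Granting this, $\operatorname{ins}(R_i)|_{\ge i}=\operatorname{ins}(R_i')$ follows, in the same way as the containment statement of Theorem~\ref{thm:lower-row-invariance}.

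The main obstacle I expect is the extended-ladder case. A ladder in $R_i$ can climb through the full black rows, whereas in $R_i'$ it terminates at an empty cell, possibly in row $i-1$ or above. I must make sure that this difference never displaces a black checker in rows $\ge i$, and in particular that the horizontal shift applied to intermediate checkers in rows $\ge i$ is the same in both cases. To handle this, I would reduce to the explicit description of the big ladder move in \cite[Section~4]{dennin2025cauchy}, where the shifts in rows below the top of the ladder depend only on the cells in those rows.
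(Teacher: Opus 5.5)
\textbf{Verdict: the approach is the paper's, but your Step~1 is false as written and needs one extra observation.}

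Your plan follows the paper. Its proof of this corollary reruns the argument of Theorem~\ref{thm:lower-row-invariance}, with the added rows now entirely black. You defer two points: that an extended ladder acts on rows $i,\ldots,n$ exactly as a shorter one would, and that the entries $\ge i$ of $\operatorname{tab}$ depend only on pipes entering the left edge in rows $\ge i$. The paper's argument takes both of these for granted as well, so they are not where your proposal falls short of it.

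The step that fails is Step~1. Every red checker lies in rows $\ge i$ only at the start.
\begin{itemize}
\item In your own Case~2, the red checker ends at the top of its ladder, strictly above row $i$.
\item That top is located in rows where $R_i$ and $R_i'$ differ, so the same checker can land in different rows in the two diagrams.
\item In all later flows the two diagrams therefore contain red checkers in different positions. The claim that both rectifications select the same red checkers in the same order is then false.
\item Your case split never covers a ladder move whose red checker starts above row $i$.
\end{itemize}

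What is missing is the observation made in the proof of Theorem~\ref{thm:lower-row-invariance}. A ladder occupies only rows at or above the starting row of its red checker, so a move starting above row $i$ cannot touch rows $i,\ldots,n$. Moreover, $Y_j^+$ always treats the lowest remaining red checker of column $j$. Hence the red checkers of column $j$ in rows $\ge i$, which agree in the two diagrams, are all processed before any red checker of that column lying above row $i$. With this you can pair the moves red checker by red checker, and your induction on agreement in rows $\ge i$ goes through.
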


\begin{proof}
The proof is the same lower-row invariance argument as
Theorem~\ref{thm:lower-row-invariance}. The additional black checkers occur strictly above row $i$, and black checkers never change rows under $Y^+$-rectification. Hence, the sequence of checker movements and the pipe connectivity in rows $i,i+1,\ldots$ are unchanged.
\end{proof}

\begin{lemma}[Rectangle bound for insertion tableaux]
\label{lem:rectangle-bound}
Let
\[
    A\in BM_{m\times n}.
\]
Then there exists a partition
\[
    \lambda\subseteq [m]\times[n]
\]
such that
\[
    \operatorname{ins}(A)\in \operatorname{RSSYT}(\lambda,m).
\]
In particular,
\[
    \operatorname{shape}(\operatorname{ins}(A))
    \subseteq [m]\times[n],
\]
so \(\operatorname{ins}(A)\) has at most \(m\) rows and at most \(n\)
columns.
\end{lemma}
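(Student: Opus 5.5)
We will prove the statement via the column-insertion description of $\operatorname{ins}$ used in Theorem~\ref{insertion-eq}. Alternatively, we can read it directly from the rectified black component $V$ of $\operatorname{Rect}(A)$. Our plan is to bound the three quantities separately: the entries, the number of rows, and the number of columns of $\operatorname{tab}(V)$. The reverse semistandard property itself is already part of the definition of $\operatorname{tab}$, as discussed in the example following its definition, so only the bounds need proof.

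\emph{Entries.} Black checkers never change rows under any ladder move $Y_j^+$, as noted in the description of $Y_i^+$ and used in Theorem~\ref{thm:lower-row-invariance}. Hence every black checker of $V$ lies in some row $r\in[m]$. Since $\operatorname{tab}$ records each crossing by its row index, all entries of $\operatorname{ins}(A)$ lie in $[m]$.

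\emph{Number of rows.} A column of a reverse semistandard tableau is strictly decreasing. With entries in $[m]$, each column therefore has at most $m$ boxes, so $\lambda$ has at most $m$ rows.

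\emph{Number of columns.} Here we use the column-insertion formula
\[
\operatorname{ins}(A)=\mathcal C_1*(\mathcal C_2*(\cdots*(\mathcal C_n*\varnothing)\cdots)).
\]
We induct on $n$, with the stronger claim that $\operatorname{ins}(A)$ has at most $n$ columns. Inserting a single column tableau $\mathcal C_j$, a set of distinct entries, into a tableau $T$ adds a vertical strip to $\operatorname{shape}(T)$: the new boxes lie in distinct rows, by the standard row-bumping lemma for inserting a strictly decreasing sequence. A vertical strip increases the number of columns by at most one. So after $n$ insertions there are at most $n$ columns. This is the dual-RSK fact that the recording tableau has conjugate shape, with entries in $[n]$ strictly increasing along its rows. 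It is consistent with the definition of $\operatorname{rec}$: there $\operatorname{shape}(T_i)=\operatorname{shape}(\operatorname{ins}(A_i'))^t$, and each step adds only one label $i-1$ per row of $T$.

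The main obstacle is justifying this vertical-strip property within the paper's framework, since the insertion convention is only cited from \cite{dennin2025cauchy}. If a direct pipe-dream argument is preferred, we would instead argue on $V$. A reduced pipe dream for a Grassmannian permutation with descent at position $m$ has all its crosses in rows $\le m$. The number of columns of $\operatorname{tab}(V)$ equals the maximal number of crossings on a single pipe, and after rectification every black checker lies in the first $n$ columns, since red checkers end northeast of black ones within the $m\times n$ footprint. Hence each pipe carries at most $n$ crossings. We would try the insertion argument first and use the pipe argument only as a fallback.
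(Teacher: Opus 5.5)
Your proof is correct, but it is organized differently from the paper's. The paper argues in a single step. It identifies $\operatorname{tab}(V)$ with the usual dual-RSK insertion tableau via Dennin's Proposition~8.7. It then reads $\lambda\subseteq[m]\times[n]$ and $\operatorname{ins}(A)\in\operatorname{RSSYT}(\lambda,m)$ directly off the known codomain $\bigsqcup_{\lambda\subseteq[m]\times[n]}\operatorname{RSSYT}(\lambda,m)\times\operatorname{RSSYT}(\lambda^{t},n)$ of dual RSK.

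You instead prove three separate bounds. Entries in $[m]$ and at most $m$ rows come from the pipe-dream side alone: black checkers keep their rows, and $\operatorname{tab}$ is column-strict. No appeal to dual RSK is needed there. Only $\lambda_1\le n$ uses the insertion description, via the fact that inserting $\mathcal C_j$ adds a vertical strip.

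You still need Dennin's identification of $\operatorname{tab}(V)$ with column insertion, exactly as the paper does. What you gain is that the appeal to the whole dual-RSK bijection is replaced by the elementary bumping lemma underlying it. Your version also makes visible that the only nontrivial input is the vertical-strip property, which is equivalent to $\operatorname{rec}(A)$ being column-strict.

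A few side remarks should be corrected:
\begin{enumerate}
\item A vertical strip in $\operatorname{ins}$ transposes to at most one new label $i-1$ per \emph{column} of $T$, not per row. In the worked example of the proof sketch, $T_1$ gains two $1$'s in its third row.
\item In the paper's reverse convention, the relevant property of $\operatorname{rec}$ is that its columns strictly decrease in $[n]$, not that its rows strictly increase.
\item The lemma you want is the column-bumping lemma for reverse column insertion, where a strictly decreasing sequence yields a vertical strip. For reverse row insertion the sequence would have to be increasing instead.
\end{enumerate}

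Finally, your fallback pipe argument has a missing step. It needs the crossings recorded in one row of $\operatorname{tab}(V)$ to sit in distinct columns of $V$. Confining the black checkers to columns $\le n$ does not by itself give this.
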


\begin{proof}
First, the pipe-dream insertion tableau used here agrees with the ordinary column-insertion tableau. By \cite[Proposition~8.7 and the discussion preceding Proposition~8.8]{dennin2025cauchy},
if
\[
    \operatorname{Rect}(A)=(V,U),
\]
then
\[
    \operatorname{tab}(V)
    =
    \operatorname{ins}(A)
\]
where the right-hand side is the usual dual-RSK insertion tableau.

The usual dual RSK correspondence has codomain
\[
    \operatorname{dRSK}:
    BM_{m\times n}
    \longrightarrow
    \bigsqcup_{\lambda\subseteq[m]\times[n]}
    \operatorname{RSSYT}(\lambda,m)
    \times
    \operatorname{RSSYT}(\lambda^t,n),
\]
with
\[
    \operatorname{dRSK}(A)
    =
    \bigl(
        \operatorname{ins}(A),
        \operatorname{rec}(A)
    \bigr).
\]
Therefore, there exists a partition
\[
    \lambda\subseteq[m]\times[n]
\]
such that
\[
    \operatorname{ins}(A)
    \in
    \operatorname{RSSYT}(\lambda,m).
\]
Consequently
\[
    \operatorname{shape}(\operatorname{ins}(A))
    =
    \lambda
    \subseteq[m]\times[n].
\]
Since the rectangle $[m]\times[n]$ has $m$ rows and $n$ columns,
\[
    \ell(\lambda)\le m
    \qquad\text{and}\qquad
    \lambda_1\le n.
\]
Thus $\operatorname{ins}(A)$ has at most $m$ rows and at most $n$ columns.
\end{proof}

Let \(T\) be a tableau with entries in \(\{i,i+1,\ldots,n\}\) and with at most \(m\) columns. Define the filled tableau
\[
    \operatorname{fill}_{<i}(T)
\]
columnwise by
\[
    \operatorname{col}_{\operatorname{fill}_{<i}(T)}(q)
    :=
    [i-1]\cup\operatorname{col}_{T}(q)
\]
for every \(q\in[m]\), where
\[
    [i-1]:=\{1,2,\ldots,i-1\}.
\]
If \(T\) has fewer than \(m\) columns, the missing columns are treated as empty columns. Thus \(\operatorname{fill}_{<i}(T)\) is obtained by adjoining the entries \(1,\ldots,i-1\) to every one of the \(m\) possible columns.

When \(i\) is understood from context, we write simply
\[
    \operatorname{fill}(T)=\operatorname{fill}_{<i}(T).
\]

\begin{theorem}[Filling identity]
\label{filling-identity}
For every \(i\in[n]\),
\[
    \operatorname{fill}_{<i}\bigl(\operatorname{ins}(R_i')\bigr)
    =
    \operatorname{ins}(R_i).
\]
\end{theorem}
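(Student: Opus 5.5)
Since $\operatorname{ins}$ agrees with the column-insertion (dual RSK) tableau, I will argue at the level of column insertion rather than pipe dreams. Here $R_i = A_i^\dagger$ is an $n\times m$ binary matrix whose first $i-1$ rows are all ones; these are all-black rows of the super pipe dream. The matrix $R_i'$ has the same rows $i,\dots,n$, while its first $i-1$ rows are empty. By Corollary~\ref{cor:upper-black-row-invariance}, the part of $\operatorname{ins}(R_i)$ with entries $\ge i$ equals $\operatorname{ins}(R_i')$. So it suffices to show two things: the entries $1,\dots,i-1$ of $\operatorname{ins}(R_i)$ occur in every one of the $m$ columns, and they sit below the entries $\ge i$. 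Column strictness (entries strictly decreasing down columns) then forces
\[
\operatorname{col}_q(\operatorname{ins}(R_i)) = \operatorname{col}_q(\operatorname{ins}(R_i')) \cup [i-1]
\]
for every $q\in[m]$, which is exactly $\operatorname{fill}_{<i}(\operatorname{ins}(R_i'))$.

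To locate the small entries, I count them. Black checkers never change rows under rectification, and each black checker in row $r$ contributes one entry $r$ to $\operatorname{tab}(V)$. Row $r\le i-1$ of $R_i$ has exactly $m$ black checkers, so $\operatorname{ins}(R_i)$ contains exactly $m$ copies of each value $r\in[i-1]$. Rows of a reverse SSYT weakly decrease and columns strictly decrease, so equal entries form a horizontal strip: each column holds at most one copy of $r$. By Lemma~\ref{lem:rectangle-bound} applied to the $n\times m$ matrix $R_i$, $\operatorname{ins}(R_i)$ has at most $m$ columns. The $m$ copies of $r$ therefore occupy all $m$ columns, one in each, and every column contains all of $[i-1]$. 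The entries $<i$ are automatically at the bottom of each column by strict decrease, and removing them leaves the entries $\ge i$, which form $\operatorname{ins}(R_i')$ by the corollary. It follows that $\operatorname{ins}(R_i)=\operatorname{fill}_{<i}(\operatorname{ins}(R_i'))$.

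The main obstacle is the first step. Corollary~\ref{cor:upper-black-row-invariance} is stated for the subtableau with entries $\ge i$, while its wording says ``strictly greater than $i$''. I will read it as entries $\ge i$, which is what its proof (lower-row invariance) actually establishes. The remaining risk is the claim that entries $\ge i$ of $\operatorname{tab}$ of the rectification of $R_i$ are determined by rows $\ge i$ alone. Ladder moves can extend upward into the all-black rows, shifting black checkers there horizontally, and one must check that this does not change which pipe, and hence which tableau row, the lower crossings belong to. If that argument feels thin, I would instead argue via column insertion directly. Building $\operatorname{ins}$ by inserting the row-set tableaux from bottom to top, the top $i-1$ rows contribute entries smaller than everything already present. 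Inserting such entries only appends them at the bottoms of columns and never bumps the larger ones. Consequently the entries $\ge i$ stay fixed, which independently yields the decomposition used above.
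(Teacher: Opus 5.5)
Your proof is correct and follows the paper's argument essentially step for step. Corollary~\ref{cor:upper-black-row-invariance}, read as you rightly do with entries $\geq i$ rather than ``strictly greater than $i$'', identifies the large part of $\ins(R_i)$ with $\ins(R_i')$; then the $m$ copies of each $p<i$, the $m$-column bound from Lemma~\ref{lem:rectangle-bound}, and column strictness force every column to end in $i-1,\dots,1$.

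Your fallback sketch, however, does not give an independent proof. $\ins(R_i)$ is built by inserting the column sets of $R_i$, and each of these mixes entries $<i$ with entries $\geq i$, so there is no stage at which the top rows are inserted last. What actually justifies the corollary at the level of insertion is the restriction property: an entry $<i$ never displaces an entry $\geq i$, and it does not change where an entry $\geq i$ lands.
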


\begin{proof}
If $i=1$, then $R_1=R_1'$ and
$\operatorname{fill}_{<1}$ is the identity, so the result is immediate.

Now suppose $i\ge2$. The matrix $R_i$ is obtained from $R_i'$ by replacing the first $i-1$ empty rows with all-black rows. Since
\[
    R_i\in BM_{n\times m},
\]
each of these rows contains exactly $m$ black checkers.

By Corollary~\ref{cor:upper-black-row-invariance}, adding these rows does not change the part of the insertion tableau arising from rows
$i,i+1,\ldots,n$. Hence
\[
    \left.\operatorname{ins}(R_i)\right|_{\ge i}
    =
    \operatorname{ins}(R_i').
\]

It remains to determine the entries coming from the newly added all-black rows.

Black checkers never change rows under $Y^+$-rectification. Therefore,
for every $p<i$, the $m$ black checkers in row $p$ contribute exactly
$m$ entries labeled $p$ to $\ins(R_i)$.

By Lemma~\ref{lem:rectangle-bound}, the tableau $\ins(R_i)$ has at most $m$ columns. Since the columns of a reverse semistandard Young tableau are strictly decreasing, a fixed label $p$ can occur at most once in any column. Since it occurs exactly $m$ times and there are at most $m$ columns, it must occur exactly once in each of the $m$ possible columns.

Thus every column contains each of
\[
    1,2,\ldots,i-1
\]
exactly once.

All entries inherited from $\ins(R_i')$ are at least $i$. Because columns are strictly decreasing from top to bottom, the newly added smaller entries must occur below them, in the unique order
\[
    i-1,i-2,\ldots,1.
\]
Therefore, each column of $\ins(R_i)$ is obtained from the corresponding column of $\ins(R_i')$ by adjoining precisely the entries $1,\ldots,i-1$.

Hence,
\[
    \ins(R_i)
    =
    \operatorname{fill}_{<i}
    \bigl(
        \ins(R_i')
    \bigr).
\]
\end{proof} 

Let $T$ be a reverse semistandard Young tableau with entries in $[r]$ and with at most $c$ columns. We regard $T$ as lying inside the full
$r\times c$ rectangle, treating missing columns as empty.

Recall that the complementary tableau $\overline{T}$ is defined by
\[
    \operatorname{col}_q(\overline{T})
    :=
    [r]\setminus
    \operatorname{col}_{c+1-q}(T),
    \qquad q\in[c].
\]

\begin{definition}[Complementary positions]
\label{def:complementary-positions}
Let $S$ and $T$ be tableaux with entries in $[r]$ and at most $c$ columns.

For $a\in[r]$, we say that the entries labeled $a$ occur in \emph{complementary positions} in $S$ and $T$ if, for every $q\in[c]$,
\[
    a\in\operatorname{col}_q(S)
    \quad\Longleftrightarrow\quad
    a\notin\operatorname{col}_{c+1-q}(T).
\]

We say that $S$ and $T$ are \emph{complementary} if every $a\in[r]$ occurs in complementary positions.
\end{definition}

By definition,
\[
    S=\overline{T}
\]
if and only if $S$ and $T$ are complementary.

\begin{proposition}[Shape compatibility]
\label{prop:shape-compatibility}
For every \(A\in \mathrm{BM}_{m\times n}\),
\[
    \operatorname{shape}\bigl(\operatorname{ins}(A^\dagger)\bigr)
    =
    \operatorname{shape}\bigl(\overline{\operatorname{rec}(A)}\bigr).
\]
In particular, for every \(i\in[n]\),
\[
    \operatorname{shape}\bigl(\operatorname{ins}(R_i)\bigr)
    =
    \operatorname{shape}\bigl(\overline{\operatorname{rec}(A_i)}\bigr).
\]
\end{proposition}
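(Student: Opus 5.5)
The plan is to reduce shape compatibility to a counting statement about the classical dual RSK correspondence, using Lemma~\ref{lem:rectangle-bound} together with the identification of $\operatorname{ins}$ with ordinary column insertion from \cite[Proposition~8.7]{dennin2025cauchy}.

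First, record the shapes involved. Put $\lambda:=\operatorname{shape}(\operatorname{ins}(A))\subseteq[m]\times[n]$. Then $\operatorname{rec}(A)=T_1$ has shape $\lambda^t\subseteq[n]\times[m]$, so it has at most $n$ rows and at most $m$ columns. The complement $\overline{\operatorname{rec}(A)}$ is taken inside the full $n\times m$ rectangle. Its column $q$ has length $n-\lambda^t_{m+1-q}=n-\ell_{m+1-q}$, where $\ell_c$ denotes the length of column $c$ of $\lambda^t$, which is the $c$-th part of $\lambda$. Since the lengths $\lambda^t_{c}$ are weakly decreasing in $c$, the lengths $n-\lambda^t_{m+1-q}$ are weakly decreasing in $q$. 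Hence $\overline{\operatorname{rec}(A)}$ has the shape of the complement of $\lambda^t$ in the $n\times m$ rectangle, rotated by $180^\circ$. Call this shape $\mu$; its columns are $\mu^t_q=n-\lambda_{m+1-q}$.

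Second, compute the shape of $\operatorname{ins}(A^\dagger)$, where $A^\dagger\in\mathrm{BM}_{n\times m}$. I would use the standard fact that the shape of the dual RSK insertion tableau is determined by Greene-type statistics of the matrix. Equivalently, the row lengths of the column-insertion tableau are determined by the sequence of column contents. The cleanest route is through the Knuth relations. Complementing every entry of a binary matrix corresponds, under dual RSK, to complementing the conjugate shape inside the rectangle. Transposing swaps the roles of insertion and recording, which is the classical symmetry of dual RSK (Stanley EC2, 7.14). Combining these gives $\operatorname{shape}(\operatorname{ins}(A^\dagger))=\mu$.

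A more self-contained alternative is to count directly. Column $q$ of $\operatorname{ins}(A^\dagger)$ records which rows of $A^\dagger$ (that is, columns of $A$) have entries in that column. Summing over $q\le k$ and using Greene's theorem, the quantity $\sum_{q\le k}\mu^t_q$ would be the maximal size of a union of $k$ antichain-type sets in $A^\dagger$. One would then show that this equals $nk$ minus the corresponding maximum for the $0$-entries of $A$. The second statement, for $R_i=A_i^\dagger$, follows immediately by applying the first to the matrix $A_i$.

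The main obstacle is the second step: rigorously establishing the complementation/transposition symmetry of shapes. I would attempt it first via Greene's theorem for dual RSK. In that setting $\sum_{j\le k}\lambda^t_j$ is the maximum number of $1$'s in $A$ covered by $k$ weakly-NE chains. The task is to show that chains of $1$'s in $A$ correspond, after transpose-complement, to the complement of chains of $0$'s. If this becomes messy, I would instead run a downward induction on $i$, in parallel with Theorem~\ref{filling-identity} and Theorem~\ref{thm:lower-row-invariance}. At each step one compares the number of boxes added in each column: passing from $T_i$ to $T_{i-1}$ adds a horizontal strip of $(i-1)$'s of size $|C_{i-1}|$, while passing from $\operatorname{ins}(R_i')$ to $\operatorname{ins}(R_{i-1}')$ adds a vertical-strip-type set of $m-|C_{i-1}|$ entries. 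One then checks that the shape of the former is the complement of the shape of the latter.
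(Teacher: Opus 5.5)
Your first step matches the paper: column $q$ of $\overline{\operatorname{rec}(A)}$ has height $n-\lambda_{m+1-q}$, and the statement for $R_i$ follows by applying the first part to $A_i$. The gap is the second step. The paper does not derive $\operatorname{shape}(\operatorname{ins}(A^\dagger))=\lambda^\dagger$; it takes it from \cite[Proposition~8.8]{dennin2025cauchy}.

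Your derivation splits $A\mapsto A^\dagger$ into complementation and transposition and asserts a shape rule for each. Neither has one. Let $m=n=2$, let $J$ be the all-ones matrix, and let $B=\begin{bmatrix}1&0\\1&0\end{bmatrix}$, $I=\begin{bmatrix}1&0\\0&1\end{bmatrix}$, $K=\begin{bmatrix}0&1\\1&0\end{bmatrix}$.
\begin{itemize}
\item A label occurring twice in a two-box reverse semistandard tableau must fill a row. Applying this to the recording tableaux of $B$ and $J-B$, and to the insertion tableau of $B^t$, forces $\operatorname{ins}(B)$ and $\operatorname{ins}(J-B)$ to have shape $(1,1)$ and $\operatorname{ins}(B^t)$ to have shape $(2)$.
\item $I$ and $K$ have the same row and column sums, and each shape has a unique filling with content $(1,1)$. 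So injectivity of dual RSK gives them the shapes $(2)$ and $(1,1)$ in some order; call $X$ the one of shape $(1,1)$.
\item Transposition: $X^t=X$ keeps shape $(1,1)$, while $B^t$ has shape $(2)$.
\item Complementation: $J-X$ has shape $(2)$, while $J-B$ has shape $(1,1)$.
\end{itemize}
So neither $\operatorname{shape}(\operatorname{ins}(A^t))$ nor $\operatorname{shape}(\operatorname{ins}(J-A))$ is a function of $\operatorname{shape}(\operatorname{ins}(A))$. In particular, dual RSK has no transpose symmetry analogous to ordinary RSK: for the symmetric matrix $X$ it would force $(1,1)$ to be self-conjugate. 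Only the composite behaves well ($B^\dagger$ and $X^\dagger$ both have shape $(2)=(1,1)^\dagger$), and that is exactly the fact you were trying to establish.

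Your fallbacks do not supply it either.

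In the Greene route,
\[
\sum_{q\le k}\mu^t_q=nk-(\lambda_{m-k+1}+\cdots+\lambda_m)=nk-|A|+(\lambda_1+\cdots+\lambda_{m-k}),
\]
where $|A|$ is the number of $1$'s of $A$. So you would need a min--max duality between unions of $k$ chains in the $0$'s of $A$ and unions of $m-k$ chains in the $1$'s of $A$. That is a genuine theorem you have not proved. Also, comparing with ``the corresponding maximum for the $0$-entries of $A$'' is the wrong comparison, since the $1$'s of $A^\dagger$ already are the $0$'s of $A$.

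In the inductive route, you know that $|C_{i-1}|$ boxes labeled $i-1$ are added on one side and $m-|C_{i-1}|$ on the other. Both are horizontal strips, since a label cannot repeat in a column of a reverse semistandard tableau. But the sizes say nothing about which columns the boxes occupy. Showing those columns are complementary is the entire content of the conjecture. In the paper the dependence runs the other way: Theorem~\ref{thm:induction-step} uses this proposition to place the $(i-1)$'s.

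To repair the proposal, cite \cite[Proposition~8.8]{dennin2025cauchy} for $\operatorname{shape}(\operatorname{ins}(A^\dagger))=\lambda^\dagger$. Your first step then finishes the argument exactly as in the paper.
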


\begin{proof}
Let
\[
    \lambda:=\operatorname{shape}\bigl(\operatorname{ins}(A)\bigr).
\]
By Proposition~8.8 of \cite{dennin2025cauchy}, the pipe-dream version of
dual RSK satisfies
\[
    \operatorname{dRSK}'(A)
    =
    \bigl(\operatorname{ins}(A),\operatorname{ins}(A^\dagger)\bigr)
    \in
    \operatorname{RSSYT}(\lambda,m)
    \times
    \operatorname{RSSYT}(\lambda^\dagger,n).
\]
It follows that
\[
    \operatorname{shape}\bigl(\operatorname{ins}(A^\dagger)\bigr)
    =
    \lambda^\dagger.
\]

On the other hand, the usual dual RSK correspondence gives
\[
    \operatorname{dRSK}(A)
    =
    \bigl(\operatorname{ins}(A),\operatorname{rec}(A)\bigr)
    \in
    \operatorname{RSSYT}(\lambda,m)
    \times
    \operatorname{RSSYT}(\lambda^{\mathsf t},n).
\]
Hence
\[
    \operatorname{shape}\bigl(\operatorname{rec}(A)\bigr)
    =
    \lambda^{\mathsf t}.
\]

By Lemma \ref{lem:rectangle-bound}, $\lambda\subseteq [m]\times[n]$, meaning we can write
\[
    \lambda=(\lambda_1,\lambda_2,\ldots,\lambda_m),
\] where $\lambda_j=0$ for any missing rows. The $q$-th column of
$\lambda^{\mathsf t}$ has height $\lambda_q$. Therefore,
\[
    \left|\operatorname{col}_q(\rec(A))\right|=\lambda_q
    \qquad\text{for every }q\in[m].
\]

Recall that the complementary tableau is defined columnwise by
\[
    \operatorname{col}_q(\overline{\rec(A)})
    :=
    [n]\setminus
    \operatorname{col}_{m+1-q}(\rec(A)).
\]
Hence,
\begin{align*}
    \left|\operatorname{col}_q(\overline{\rec(A)})\right|
    &=
    n-
    \left|\operatorname{col}_{m+1-q}(\rec(A))\right|\\
    &=
    n-\lambda_{m+1-q}.
\end{align*}

By definition, $\lambda^\dagger$ is the partition in the $n\times m$
rectangle whose $q$-th column has height
\[
    n-\lambda_{m+1-q},
    \qquad q\in[m].
\]
Thus, $\overline{\rec(A)}$ and $\lambda^\dagger$ have the same column heights in every column, and therefore
\[
    \operatorname{shape}(\overline{\rec(A)})
    =
    \lambda^\dagger.
\]

Combining the two equalities yields
\[
    \operatorname{shape}\bigl(\ins(A^\dagger)\bigr)
    =
    \operatorname{shape}\bigl(\overline{\rec(A)}\bigr).
\]

Applying this result to \(A_i\), and using \(R_i=A_i^\dagger\), gives
\[
    \operatorname{shape}\bigl(\ins(R_i)\bigr)
    =
    \operatorname{shape}\bigl(\overline{\rec(A_i)}\bigr).
\]
\end{proof}

\begin{theorem}
\label{prop:base-case}
Let \(A_n\) be the suffix matrix retaining only the final column of
\(A\), and let \(R_n=A_n^\dagger\). Then
\[
    \ins(R_n)
    =
    \overline{\rec(A_n)}.
\]
\end{theorem}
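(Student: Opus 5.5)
The plan is to compute both sides explicitly; in the base case each is an almost trivial tableau. Let $k:=|\mathcal C_n|$ be the number of $1$'s in the last column $C_n$ of $A$. Both sides will turn out to be determined by $k$ alone, and we then compare them column by column using Definition~\ref{def:complementary-positions}.

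\emph{The recording side.} By Theorem~\ref{partial-tab-equal-rec}, $\rec(A_n)=T_n$. By definition, $T_n$ has shape $\operatorname{shape}(\ins(A_n'))^{\mathsf t}$ and all of its entries equal $n$. The matrix $A_n'$ is the single column $C_n$. By the column-insertion description used in the proof of Theorem~\ref{insertion-eq}, $\ins(A_n')=\mathcal C_n*\varnothing=\mathcal C_n$, which is a single column of height $k$. Hence $T_n$ is a single row of length $k$ filled with $n$'s.

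Viewing $T_n$ inside the $n\times m$ frame, we have $\operatorname{col}_p(T_n)=\{n\}$ for $p\le k$ and $\operatorname{col}_p(T_n)=\varnothing$ for $k<p\le m$. Taking complements,
\[
\operatorname{col}_q\bigl(\overline{\rec(A_n)}\bigr)=
\begin{cases}[n], & q\le m-k,\\ [n-1], & q> m-k.\end{cases}
\]

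\emph{The insertion side.} By Theorem~\ref{filling-identity}, $\ins(R_n)=\operatorname{fill}_{<n}(\ins(R_n'))$. The diagram $R_n'$ is empty except in row $n$, which is $C_n^\dagger$. Since $\dagger$ transposes and exchanges black and red, row $n$ contains exactly $m-k$ black checkers.

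Black checkers never change rows under $Y^+$-rectification. So $\ins(R_n')$ consists of exactly $m-k$ entries, all equal to $n$. Because columns of a reverse semistandard tableau are strictly decreasing, these entries lie in distinct columns. Because the shape is a Young diagram, they must therefore form a single row occupying columns $1,\dots,m-k$. Applying $\operatorname{fill}_{<n}$ adjoins $1,\dots,n-1$ to every one of the $m$ columns. Hence column $q$ of $\ins(R_n)$ is $[n]$ for $q\le m-k$ and $[n-1]$ otherwise, which matches the display above. As a consistency check, the shapes also agree by Proposition~\ref{prop:shape-compatibility}.

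\emph{Main obstacle.} The delicate step is pinning down $\ins(R_n')$, and in particular the count of black checkers in row $n$ together with the left-justification of the resulting row. This depends on the precise convention for $\dagger$ and on whether the column reversal $c+1-q$ in the complement matches the orientation of the tableau. I would first verify the count $m-k$ directly from the definition $\mathbf P^\dagger=(P_y^{\mathsf t},P_x^{\mathsf t})$. For the shape, I would argue that a single row of $n$'s is forced by the column-strictness and partition-shape argument above, rather than by tracking ladder moves. If the reversal convention flips the answer, I would instead compare only the column heights $n-\lambda_{m+1-q}$, as in the proof of Proposition~\ref{prop:shape-compatibility}. Here every column is an initial segment $[n]$ or $[n-1]$, so column heights determine the tableau completely and equality follows.
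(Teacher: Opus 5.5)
Your proof is correct and follows the paper's route: $\rec(A_n)$ is a single row of $k$ entries equal to $n$, Theorem~\ref{filling-identity} handles the insertion side, and the two sides are compared column by column. The only difference is that you determine $\ins(R_n')$ directly (its $m-k$ black checkers stay in row $n$, so column-strictness forces a single row of $n$'s), whereas the paper matches the shapes by citing Proposition~\ref{prop:shape-compatibility}, which rests on Dennin's Proposition~8.8 and classical dual RSK, so your version is somewhat more self-contained.
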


\begin{proof}
Let
\[
    C_n
    =
    \{r\in[m]:A_{rn}=1\},
    \qquad
    k:=|C_n|.
\]
Since $A_n$ has only one nonzero column, its $k$ inserted boxes are all created by column $n$. Therefore, by reverse semi-standard notation of tableaux, 
\[
    \rec(A_n)
    =
    \underbrace{
        \begin{array}{|c|c|c|c|}
        \hline
        n & n & \cdots & n\\
        \hline
        \end{array}
    }_{k\text{ boxes}}.
\]

Now consider $\ins(R_n)$.

By Theorem~\ref{filling-identity},
\[
    \ins(R_n)
    =
    \operatorname{fill}_{<n}
    \bigl(
        \ins(R_n')
    \bigr).
\]
Consequently, every one of the $m$ possible columns of $\ins(R_n)$ contains each of the entries
\[
    1,2,\ldots,n-1
\]
exactly once. Since columns are strictly decreasing, these entries occur in the forced order
\[
    n-1,n-2,\ldots,1.
\]
Any additional box in a column must contain the only remaining possible label, $n$.

By Proposition~\ref{prop:shape-compatibility},
\[
    \operatorname{shape}
    \bigl(
        \ins(R_n)
    \bigr)
    =
    \operatorname{shape}
    \bigl(
        \overline{\rec(A_n)}
    \bigr).
\]
By definition, the complementary tableau $\overline{\rec(A_n)}$ contains $1,\ldots,n-1$ once in every column because there are no $1,\ldots,n-1$ in $\rec(A_n)$, and its remaining boxes are filled
with $n$.

The two tableaux have the same shape, their entries $1,\ldots,n-1$ are forced into the same positions, and every remaining box in each tableau is forced to contain $n$. Hence,
\[
    \ins(R_n)
    =
    \overline{\rec(A_n)}.
\]
\end{proof} 

\begin{theorem} \label{thm:induction-step}
    If $\overline{\rec(A_i)} = \ins(R_i)$ for $i \in [n]$ and $i > 1$, then $\overline{\rec(A_{i-1})} = \ins(R_{i-1})$.
\end{theorem}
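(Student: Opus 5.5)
The plan is to compare $\overline{\rec(A_{i-1})}$ and $\ins(R_{i-1})$ label by label, using Definition~\ref{def:complementary-positions}. The labels split into three groups, $a\ge i$, $a\le i-2$, and $a=i-1$. The first two groups are controlled by the induction hypothesis and by the filling identity. The last group is pinned down by Proposition~\ref{prop:shape-compatibility}.

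\emph{Labels $a\ge i$.} By Theorem~\ref{partial-tab-equal-rec} and Lemma~\ref{lem:growth-chain-uniqueness}, $\rec(A_{i-1})=T_{i-1}$ and $\rec(A_i)=T_i$. The boxes of $T_{i-1}$ with labels $\ge i$ form exactly $T_i$, so $\rec(A_{i-1})$ and $\rec(A_i)$ have the same boxes labeled $a$ for every $a\ge i$.

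On the other side, Theorem~\ref{filling-identity} gives $\ins(R_{i-1})=\operatorname{fill}_{<i-1}(\ins(R_{i-1}'))$ and $\ins(R_i)=\operatorname{fill}_{<i}(\ins(R_i'))$. Theorem~\ref{thm:lower-row-invariance} gives $\ins(R_i')\subseteq\ins(R_{i-1}')$, with only new labels $i-1$. Hence, in each column $q$,
\[
\operatorname{col}_q(\ins(R_{i-1}))\cap\{i,\ldots,n\}=\operatorname{col}_q(\ins(R_i'))=\operatorname{col}_q(\ins(R_i))\cap\{i,\ldots,n\}.
\]
Since the induction hypothesis says every label $a\ge i$ occurs in complementary positions in $\ins(R_i)$ and $\rec(A_i)$, the same holds for $\ins(R_{i-1})$ and $\rec(A_{i-1})$.

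\emph{Labels $a\le i-2$.} In $\rec(A_{i-1})=T_{i-1}$ these labels never occur. By the filling identity, in $\ins(R_{i-1})$ each of them occurs exactly once in every one of the $m$ columns. So they are trivially in complementary positions.

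\emph{Label $a=i-1$.} This is the main point. Fix a column $q$ of $\ins(R_{i-1})$. Its entries are the fixed set $X_q=\operatorname{col}_q(\ins(R_{i-1}))\cap\{i,\ldots,n\}$, together with all of $[i-2]$, and possibly $i-1$. Hence $i-1\in\operatorname{col}_q(\ins(R_{i-1}))$ if and only if
\[
|\operatorname{col}_q(\ins(R_{i-1}))|=|X_q|+i-1.
\]

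Now look at $\overline{\rec(A_{i-1})}$. Its column $q$ is $[n]\setminus\operatorname{col}_{m+1-q}(\rec(A_{i-1}))$. It contains all of $[i-2]$, its part in $\{i,\ldots,n\}$ equals $X_q$ by the first group of labels, and it contains $i-1$ exactly when $i-1\notin\operatorname{col}_{m+1-q}(\rec(A_{i-1}))$. So the same height criterion decides whether $i-1$ appears in column $q$ of $\overline{\rec(A_{i-1})}$.

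Proposition~\ref{prop:shape-compatibility}, applied to $A_{i-1}$, says the two tableaux have equal column heights. Therefore the label $i-1$ lies in exactly the same columns of both, which is the complementary-position condition for $a=i-1$. Combined with the other two groups, this gives $\overline{\rec(A_{i-1})}=\ins(R_{i-1})$.

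The main obstacle I expect is the bookkeeping for the column-height criterion. One must treat missing columns as empty, and must justify that the restriction to labels $\ge i$ in the $\ins$ side is given columnwise, not merely as a set of labels. Theorem~\ref{thm:lower-row-invariance} and Corollary~\ref{cor:upper-black-row-invariance} state that these entries occur in the same positions, and I would invoke them in exactly that positional form.
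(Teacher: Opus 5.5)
Your proposal is correct and follows essentially the same route as the paper. It uses the same three-way split of labels: $a\ge i$ via the induction hypothesis together with Theorems~\ref{thm:lower-row-invariance} and~\ref{filling-identity}; $a\le i-2$, which occur in every column of $\ins(R_{i-1})$ and never in $\rec(A_{i-1})$; and $a=i-1$, which is forced by the equal column heights from Proposition~\ref{prop:shape-compatibility}. Your height criterion, comparing column $q$ of $\ins(R_{i-1})$ directly with column $q$ of $\overline{\rec(A_{i-1})}$, is just a cleaner phrasing of the paper's count that the other $n-1$ labels already occupy $n-1$ of the $n$ slots in the paired columns.
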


\begin{proof}
    Let
    \[
        S_i:=\ins(R_i),
        \qquad
        T_i:=\rec(A_i),
    \]
    and
    \[
        S_{i-1}:=\ins(R_{i-1}),
        \qquad
        T_{i-1}:=\rec(A_{i-1}).
    \]
    
    Because $S_i=\overline{T_i},$ every label in $[n]$ occurs in complementary positions in $S_i$ and $T_i$. We want to show the same is true for $S_{i-1}$ and $T_{i-1}$.

    \medskip

    By Theorem~4.6,
    \[
        T_i\subseteq T_{i-1},
    \]
    and transforming from $T_i$ to $T_{i-1}$ adds only boxes labeled $i-1$. Therefore, the positions of every label $i,i+1,\ldots,n$
    are unchanged.
    
    On the insertion side, Theorem~\ref{thm:lower-row-invariance} gives
    \[
        \ins(R_i')
        \subseteq
        \ins(R_{i-1}'),
    \]
    with only entries $i-1$ added. By Theorem \ref{filling-identity}, $\operatorname{fill}(R_i)$ and $\operatorname{fill}(R_{i-1})$ adds only labels below $i$ and therefore does not change any entry labeled $\geq i$.
    
    Hence, the positions of
    \[
        i,i+1,\ldots,n
    \]
    in $S_{i-1}$ are the same as their positions in $S_i$. Since these labels occur in complementary positions in $S_i$ and $T_i$ by the induction hypothesis, they also occur in complementary positions in $S_{i-1}$ and $T_{i-1}$.

    \medskip

    By Theorem \ref{filling-identity},
    \[
        S_{i-1}
        =
        \operatorname{fill}_{<i-1}
        \bigl(
            \ins(R_{i-1}')
        \bigr).
    \]
    Meaning, every label
    \[
        1,2,\ldots,i-2
    \]
    occurs once in every one of the $m$ possible columns of $S_{i-1}$.

    On the other hand, $A_{i-1}$ has no black checkers in columns $1,\ldots,i-2$. Therefore, its recording tableau $T_{i-1}$ has no label in $[i-2]$.

    Therefore, every label
    \[
        1,2,\ldots,i-2
    \]
    occurs in complementary positions in $S_{i-1}$ and $T_{i-1}$.
    
    \medskip
    
    Now, it only remains to prove that the label $i-1$ are in complementary positions.

    Let \[
    \operatorname{col}_{q}(S_{i-1})
    \] denote the $q$-th column of the tableaux $S_{i-1}$ and \[
    |\operatorname{col}_{q}(S_{i-1})|
    \] denote the number of boxes in the $q$-th column of tableaux $S_{i-1}$.

    By Proposition \ref{prop:shape-compatibility},
    \[
        \operatorname{shape}(S_{i-1})
        =
        \operatorname{shape}(\overline{T_{i-1}}).
    \] Hence, for each column $q$ in $S_{i-1}$ and $m-q+1$ in $T_{i-1}$ their corresponding column heights are complementary or \[|\operatorname{col}_{q}(S_{i-1})| + |\operatorname{col}_{m-q+1}(T_{i-1})| = n.\] 
    
    Because we know that entries $[n, i]$ and $[i-2, 0]$ are already in complementary positions, for each $\operatorname{col}_{q}(S_{i-1})$ and $\operatorname{col}_{m-q+1}(T_{i-1})$, $n-1$ positions are already taken. 
    
    Therefore, for each column $q$, the entry $i-1$ can only belong to either $\operatorname{col}_{q}(S_{i-1})$ or $\operatorname{col}_{m-q+1}(T_{i-1})$ but not both because $|\operatorname{col}_{q}(S_{i-1})| + |\operatorname{col}_{m-q+1}(T_{i-1})| = n$. 
    
    Hence, the entries labeled $i-1$ occur in complementary positions, meaning every label in $[n]$ occurs in complementary positions in $S_{i-1}$ and $T_{i-1}$. 
    
    By definition \ref{def:complementary-positions},
    \[
        S_{i-1}
        =
        \overline{T_{i-1}}.
    \] meaning, 
    \[
        \ins(R_{i-1})
        =
        \overline{\rec(A_{i-1})}.
    \]
\end{proof}

Now, we can move to prove $\overline{\rec(A)} = \ins(A^\dagger)$.

\begin{proof}[Proof of Theorem \ref{thm:main-thm}]
    
    By Theorem \ref{prop:base-case}, for any $A \in BM_{m \times n}$, $\ins(R_n) = \overline{\rec(A_n)}$. Additionally, by Theorem \ref{thm:induction-step}, if $\overline{\rec(A_i)} = \ins(R_i)$ for $i \in [n]$ and $i > 1$, then $\overline{\rec(A_{i-1})} = \ins(R_{i-1})$. Therefore, by induction, $\overline{\rec(A_{1})} = \ins(R_{1})$. 
    
    Furthermore, because $A_1 = A$ and $R_1 = R = A^\dagger$, $\overline{\rec(A)} = \ins(A^\dagger)$. 
    
\end{proof}

\bibliography{bibliography}

@article{dennin2025cauchy,
  title={Cauchy identities for Grothendieck polynomials and a dual RSK correspondence through pipe dreams},
  author={Dennin, Hugh},
  journal={arXiv preprint arXiv:2506.21052},
  year={2025}
}

@article {knuth1970permutation,
    AUTHOR = {Knuth, Donald E.},
     TITLE = {Permutations, matrices, and generalized {Y}oung tableaux},
   JOURNAL = {Pacific J. Math.},
  FJOURNAL = {Pacific Journal of Mathematics},
    VOLUME = {34},
      YEAR = {1970},
     PAGES = {709--727},
      ISSN = {0030-8730,1945-5844},
   MRCLASS = {05.30},
  MRNUMBER = {272654},
MRREVIEWER = {M.\ Doob},
       URL = {http://projecteuclid.org/euclid.pjm/1102971948},
}

@article {robinson1938on,
    AUTHOR = {Robinson, G. de B.},
     TITLE = {On the {R}epresentations of the {S}ymmetric {G}roup},
   JOURNAL = {Amer. J. Math.},
  FJOURNAL = {American Journal of Mathematics},
    VOLUME = {60},
      YEAR = {1938},
    NUMBER = {3},
     PAGES = {745--760},
      ISSN = {0002-9327,1080-6377},
   MRCLASS = {99-04},
  MRNUMBER = {1507943},
       DOI = {10.2307/2371609},
       URL = {https://doi.org/10.2307/2371609},
}

@article {schensted1961longest,
    AUTHOR = {Schensted, C.},
     TITLE = {Longest increasing and decreasing subsequences},
   JOURNAL = {Canadian J. Math.},
  FJOURNAL = {Canadian Journal of Mathematics. Journal Canadien de
              Math\'ematiques},
    VOLUME = {13},
      YEAR = {1961},
     PAGES = {179--191},
      ISSN = {0008-414X,1496-4279},
   MRCLASS = {05.00},
  MRNUMBER = {121305},
MRREVIEWER = {D.\ E.\ Rutherford},
       DOI = {10.4153/CJM-1961-015-3},
       URL = {https://doi.org/10.4153/CJM-1961-015-3},
}

@book {stanley2024enumerative,
    AUTHOR = {Stanley, Richard P.},
     TITLE = {Enumerative combinatorics. {V}ol. 2},
    SERIES = {Cambridge Studies in Advanced Mathematics},
    VOLUME = {208},
   EDITION = {Second},
      NOTE = {With an appendix by Sergey Fomin},
 PUBLISHER = {Cambridge University Press, Cambridge},
      YEAR = {[2024] \copyright 2024},
     PAGES = {xvi+783},
      ISBN = {978-1-009-26249-1; 978-1-009-26248-4},
   MRCLASS = {05-02 (05A15 05E05 05E10 68R05)},
  MRNUMBER = {4621625},
MRREVIEWER = {Timothy\ Y.\ Chow},
}

@article {bufetov2018hall,
    AUTHOR = {Bufetov, Alexey and Matveev, Konstantin},
     TITLE = {Hall-{L}ittlewood {RSK} field},
   JOURNAL = {Selecta Math. (N.S.)},
  FJOURNAL = {Selecta Mathematica. New Series},
    VOLUME = {24},
      YEAR = {2018},
    NUMBER = {5},
     PAGES = {4839--4884},
      ISSN = {1022-1824,1420-9020},
   MRCLASS = {05E05 (60K35)},
  MRNUMBER = {3874706},
MRREVIEWER = {Maciej\ Do\l\polhk ega},
       DOI = {10.1007/s00029-018-0442-y},
       URL = {https://doi.org/10.1007/s00029-018-0442-y},
}

@article {frieden2024qt,
    AUTHOR = {Frieden, Gabriel and Schreier-Aigner, Florian},
     TITLE = {{$qt{\rm RSK}^*$}: a probabilistic dual {RSK} correspondence
              for {M}acdonald polynomials},
   JOURNAL = {S\'em. Lothar. Combin.},
  FJOURNAL = {S\'eminaire Lotharingien de Combinatoire},
    VOLUME = {91B},
      YEAR = {2024},
     PAGES = {Art. 75, 12},
      ISSN = {1286-4889},
   MRCLASS = {05E05 (33C52 60C05)},
  MRNUMBER = {4818707},
}

@article {sagan1990robinson,
    AUTHOR = {Sagan, Bruce E. and Stanley, Richard P.},
     TITLE = {Robinson-{S}chensted algorithms for skew tableaux},
   JOURNAL = {J. Combin. Theory Ser. A},
  FJOURNAL = {Journal of Combinatorial Theory. Series A},
    VOLUME = {55},
      YEAR = {1990},
    NUMBER = {2},
     PAGES = {161--193},
      ISSN = {0097-3165,1096-0899},
   MRCLASS = {05E05 (05E10)},
  MRNUMBER = {1075706},
MRREVIEWER = {Dennis\ White},
       DOI = {10.1016/0097-3165(90)90066-6},
       URL = {https://doi.org/10.1016/0097-3165(90)90066-6},
}

@article {corwin2014tropical,
    AUTHOR = {Corwin, Ivan and O'Connell, Neil and Sepp\"al\"ainen, Timo and
              Zygouras, Nikolaos},
     TITLE = {Tropical combinatorics and {W}hittaker functions},
   JOURNAL = {Duke Math. J.},
  FJOURNAL = {Duke Mathematical Journal},
    VOLUME = {163},
      YEAR = {2014},
    NUMBER = {3},
     PAGES = {513--563},
      ISSN = {0012-7094,1547-7398},
   MRCLASS = {05E05 (05E10 60B20 60K35 82B23)},
  MRNUMBER = {3165422},
MRREVIEWER = {Marius\ R\u adulescu},
       DOI = {10.1215/00127094-2410289},
       URL = {https://doi.org/10.1215/00127094-2410289},
}

@article {aigner2022qrst,
    AUTHOR = {Aigner, Florian and Frieden, Gabriel},
     TITLE = {{$q{\rm RS}t$}: a probabilistic {R}obinson-{S}chensted
              correspondence for {M}acdonald polynomials},
   JOURNAL = {Int. Math. Res. Not. IMRN},
  FJOURNAL = {International Mathematics Research Notices. IMRN},
      YEAR = {2022},
    NUMBER = {17},
     PAGES = {13505--13568},
      ISSN = {1073-7928,1687-0247},
   MRCLASS = {60C05 (05E05 33C52)},
  MRNUMBER = {4475282},
       DOI = {10.1093/imrn/rnab083},
       URL = {https://doi.org/10.1093/imrn/rnab083},
}

@incollection {fomingrothendieck,
    AUTHOR = {Fomin, Sergey and Kirillov, Anatol N.},
     TITLE = {Grothendieck polynomials and the {Y}ang-{B}axter equation},
 BOOKTITLE = {Formal power series and algebraic combinatorics/{S}\'eries
              formelles et combinatoire alg\'ebrique},
     PAGES = {183--189},
 PUBLISHER = {DIMACS, Piscataway, NJ},
      YEAR = {sd},
   MRCLASS = {05E05 (14N15)},
  MRNUMBER = {2307216},
}

@inproceedings {fomin1996the,
    AUTHOR = {Fomin, Sergey and Kirillov, Anatol N.},
     TITLE = {The {Y}ang-{B}axter equation, symmetric functions, and
              {S}chubert polynomials},
 BOOKTITLE = {Proceedings of the 5th {C}onference on {F}ormal {P}ower
              {S}eries and {A}lgebraic {C}ombinatorics ({F}lorence, 1993)},
   JOURNAL = {Discrete Math.},
  FJOURNAL = {Discrete Mathematics},
    VOLUME = {153},
      YEAR = {1996},
    NUMBER = {1-3},
     PAGES = {123--143},
      ISSN = {0012-365X,1872-681X},
   MRCLASS = {05E05 (14C17 14M15 82B23)},
  MRNUMBER = {1394950},
MRREVIEWER = {Jean-Yves\ Thibon},
       DOI = {10.1016/0012-365X(95)00132-G},
       URL = {https://doi.org/10.1016/0012-365X(95)00132-G},
}

@article {bergeron1993rc,
    AUTHOR = {Bergeron, Nantel and Billey, Sara},
     TITLE = {R{C}-graphs and {S}chubert polynomials},
   JOURNAL = {Experiment. Math.},
  FJOURNAL = {Experimental Mathematics},
    VOLUME = {2},
      YEAR = {1993},
    NUMBER = {4},
     PAGES = {257--269},
      ISSN = {1058-6458,1944-950X},
   MRCLASS = {05E99 (05E05 14M15 20C30)},
  MRNUMBER = {1281474},
MRREVIEWER = {Axel\ Kohnert},
       URL = {http://projecteuclid.org/euclid.em/1048516036},
}

@article {knutson2005grobner,
    AUTHOR = {Knutson, Allen and Miller, Ezra},
     TITLE = {Gr\"obner geometry of {S}chubert polynomials},
   JOURNAL = {Ann. of Math. (2)},
  FJOURNAL = {Annals of Mathematics. Second Series},
    VOLUME = {161},
      YEAR = {2005},
    NUMBER = {3},
     PAGES = {1245--1318},
      ISSN = {0003-486X,1939-8980},
   MRCLASS = {05E15 (13C40 13F55 13P10 14M15 14N15)},
  MRNUMBER = {2180402},
MRREVIEWER = {Harry\ Tamvakis},
       DOI = {10.4007/annals.2005.161.1245},
       URL = {https://doi.org/10.4007/annals.2005.161.1245},
}

@article {lascoux1982polynomes,
    AUTHOR = {Lascoux, Alain and Sch\"utzenberger, Marcel-Paul},
     TITLE = {Polyn\^omes de {S}chubert},
   JOURNAL = {C. R. Acad. Sci. Paris S\'er. I Math.},
  FJOURNAL = {Comptes Rendus des S\'eances de l'Acad\'emie des Sciences.
              S\'erie I. Math\'ematique},
    VOLUME = {294},
      YEAR = {1982},
    NUMBER = {13},
     PAGES = {447--450},
      ISSN = {0249-6291},
   MRCLASS = {14M17 (05A10 14N10)},
  MRNUMBER = {660739},
}

@article {lascoux1982structure,
    AUTHOR = {Lascoux, Alain and Sch\"utzenberger, Marcel-Paul},
     TITLE = {Structure de {H}opf de l'anneau de cohomologie et de l'anneau
              de {G}rothendieck d'une vari\'et\'e{} de drapeaux},
   JOURNAL = {C. R. Acad. Sci. Paris S\'er. I Math.},
  FJOURNAL = {Comptes Rendus des S\'eances de l'Acad\'emie des Sciences.
              S\'erie I. Math\'ematique},
    VOLUME = {295},
      YEAR = {1982},
    NUMBER = {11},
     PAGES = {629--633},
      ISSN = {0249-6291},
   MRCLASS = {14M17},
  MRNUMBER = {686357},
}

@incollection {lascoux1983symmetry,
    AUTHOR = {Lascoux, Alain and Sch\"utzenberger, Marcel-Paul},
     TITLE = {Symmetry and flag manifolds},
 BOOKTITLE = {Invariant theory ({M}ontecatini, 1982)},
    SERIES = {Lecture Notes in Math.},
    VOLUME = {996},
     PAGES = {118--144},
 PUBLISHER = {Springer, Berlin},
      YEAR = {1983},
      ISBN = {3-540-12319-9},
   MRCLASS = {14M17 (20B30 32M10)},
  MRNUMBER = {718129},
MRREVIEWER = {Konrad\ Drechsler},
       DOI = {10.1007/BFb0063238},
       URL = {https://doi.org/10.1007/BFb0063238},
}

@article {lascoux1982classes,
    AUTHOR = {Lascoux, Alain},
     TITLE = {Classes de {C}hern des vari\'et\'es de drapeaux},
   JOURNAL = {C. R. Acad. Sci. Paris S\'er. I Math.},
  FJOURNAL = {Comptes Rendus des S\'eances de l'Acad\'emie des Sciences.
              S\'erie I. Math\'ematique},
    VOLUME = {295},
      YEAR = {1982},
    NUMBER = {5},
     PAGES = {393--398},
      ISSN = {0249-6291},
   MRCLASS = {14M17 (32M10 55R40)},
  MRNUMBER = {684734},
MRREVIEWER = {Konrad\ Drechsler},
}
\bibliographystyle{alpha}

\end{document}